\newcommand{\fr}{\mathcal{F}} 
\newcommand{\frp}{\mathcal{F}^\mathrm{prop}} 
\newcommand{\ind}{{\rm ind}}
\DeclareMathOperator{\coker}{coker} 
\DeclareMathOperator{\id}{id}
\author[Alberto Abbondandolo and Thomas O. Rot]{Alberto Abbondandolo${}^1$ and Thomas O. Rot${}^2$}
\begin{document}

\setstcolor{red}

\title[Homotopy classes of proper Fredholm maps]{On the homotopy classification of proper Fredholm maps into a Hilbert space}
\maketitle 
\vspace{-.3cm}
\noindent\makebox[\textwidth][c]{
\begin{minipage}[t]{.4\textwidth}
\it \Small ${\ }^{1}$ Ruhr-Universit\"at Bochum\\
\phantom{${\ }^{1}$} Fakult\"at f\"ur Mathematik\\
\phantom{${\ }^{1}$} Universit\"atsstrasse 150\\
\phantom{${\ }^{1}$} Geb\"aude NA 4/33\\
\phantom{${\ }^{1}$} D-44801 Bochum, Germany\\
\phantom{${\ }^{1}$} Email: alberto.abbondandolo@rub.de
\end{minipage}%
\begin{minipage}[t]{.4\textwidth}
\it 
\Small ${\ }^{2}$ Vrije Universiteit Amsterdam\\
\phantom{${\ }^{2}$} Department of Mathematics\\
\phantom{${\ }^{2}$} De Boelelaan 1081\\
\phantom{${\ }^{2}$} 1081 HV, Amsterdam, the Netherlands\\
\phantom{${\ }^{2}$} Email: t.o.rot@vu.nl

\end{minipage}}

\vspace{.3cm}
\begin{abstract}
We classify the homotopy classes of proper Fredholm maps from an infinite dimensional Hilbert manifold into its model space in terms of a suitable version of framed cobordism. Our construction is an alternative approach to the classification introduced by Elworthy and Tromba in 1970 and does not make use of further structures on the ambient manifold, such as Fredholm structures.
In the special case of index zero, we obtain  a complete classification involving the Caccioppoli-Smale mod 2 degree and the absolute value of the oriented degree. 
\end{abstract}

\section*{Introduction}

In \cite{et70} Elworthy and Tromba classified the space of proper Fredholm maps from certain infinite dimensional Banach manifolds into their model Banach space modulo proper Fredholm homotopy, in the spirit of the Pontryagin framed cobordism theory. The first aim of this paper is to present an alternative approach to this classification, which we find to be simpler and which does not make use of additional structures on the infinite dimensional manifolds, such as Fredholm structures. The second aim is to specialize this classification to Fredholm maps of index zero and to relate our invariants to degree theory.

Unlike in \cite{et70}, which considers manifolds modeled on Banach spaces satisfying suitable assumptions, here we just treat the case of manifolds modeled on the separable infinite dimensional real Hilbert space $\mH$. In order to simplify the exposition, we deal only with smooth maps. In Appendix \ref{weakening}, we briefly discuss how our results can be generalized to more general Banach manifolds and to maps having finite regularity.

We now describe our approach and main results. Let $M$ and $N$ be paracompact smooth manifolds modeled on $\mH$. A smooth map $f:M \rightarrow N$ is said to be Fredholm if its differential at every point is a Fredholm operator. Since the general linear group $GL(\mH)$ of $\mH$ is contractible, the tangent bundle of any manifold modeled on $\mH$ is trivial. By fixing trivializations of $TM$ and $TN$, the differential of a Fredholm map $f:M \rightarrow N$ at any point can be seen as a Fredholm operator on $\mH$, and hence the section $df$ can be seen as a map
\[
df: M \rightarrow \Phi(\mH),
\]
where $\Phi(\mH)$ denotes the space of Fredholm operators on $\mH$. We will systematically see the differential of a Fredholm map $f:M \rightarrow N$ in this way. Notice that the fact that $GL(\mH)$ is contractible implies that the homotopy class of $df$ in $[M,\Phi(\mH)]$ does not depend on the chosen trivializations of $TM$ and $TN$.
The connected component of $\Phi(\mH)$ consisting of Fredholm operators of index $n\in \mZ$ is denoted by $\Phi_n(\mH)$. 

The map $f: M \rightarrow N$ is said to be proper if the inverse image of every compact subset of $N$ is compact. The question which we address in this paper is whether two proper Fredholm maps from $M$ to $\mH$ are proper Fredholm homotopic, that is, homotopic through a map $[0,1]\times M \rightarrow \mH$ which is also proper and Fredholm. 

As a warm up, we drop for the moment the properness requirement and we ask ourselves when two Fredholm maps are Fredholm homotopic. Here the codomain of the maps is allowed to be any Hilbert manifold $N$. This question can be reduced to the following extension problem: Let $U$ and $V$ be open subsets of $M$ with $\overline{U}\subset V$ and let $f: V \rightarrow N$ be a Fredholm map. When can we find a Fredholm map $\overline{f}: M \rightarrow N$ which coincides with $f$ on $U$? 

Two obvious necessary conditions are that $f|_U$ has a continuous extension from $M$ to $N$ and that $df|_{U}$ has a continuous extension from $M$ to $\Phi(\mH)$. It turns out that these conditions are also sufficient. See Theorem \ref{thm:extensionfredholm} below for the precise statement and Appendix A for its proof. This extension result implies that the classification problem of Fredholm maps up to Fredholm homotopy is quite simple: if we denote by $\mathcal{F}_n[M,N]$ the space of Fredholm maps of index $n$ from $M$ to $N$ modulo Fredholm homotopy, we have the following result:

\begin{maintheorem}
\label{main1}
Two smooth Fredholm maps $f,g:M\rightarrow N$ of index $n$ are Fredholm homotopic if and only if $f$ is homotopic to $g$ and $df: M \rightarrow \Phi_n(\mathbb{H})$ is homotopic to $dg: M \rightarrow \Phi_n(\mathbb{H})$. Moreover, the map $f \mapsto ([f],[df])$ induces a bijection 
\[
\mathcal{F}_n[M,N] \cong
[M,N] \times [M,\Phi_n(\mathbb{H})].
\]
\end{maintheorem}

This result is also contained in \cite[Proposition 2.24]{et70}, but for sake of completeness we prove it also here, as it follows quite easily from the above mentioned Fredholm extension theorem which is needed also later on.

When Fredholm maps and Fredholm homotopies are required to be proper, the classification question becomes more interesting. Here we give an answer to this question in the case $N=\mH$. We are considering the Hilbert $\mH$ as codomain of the maps, but we could replace it with the infinite dimensional sphere inside $\mH$, since this manifold is diffeomorphic to $\mH$, see \cite{bes66}.

The first step is to have an extension result for proper Fredholm maps. Let as before $U$ and $V$ be open subsets of $M$ with $\overline{U} \subset V$. Assume that the Fredholm map $f: V \rightarrow \mH$ is proper on some closed neighborhood of $\overline{U}$. The fact that $f$ has a continuous extension to $M$ is in this case automatic, because $\mH$ is contractible, but the assumption that the map $df: V \rightarrow \Phi(\mH)$ has a continuous extension to $M$ does not guarantee the existence of a proper Fredholm map $\overline{f}: M \rightarrow \mH$ which coincides with $f$ on $U$, as we show in Example \ref{nopropext} below. If we impose the further condition that $f(\partial U)$ is not the whole of $\mH$, then the existence of $\overline{f}$ as above can be proved. See Theorem \ref{thm:totalextension} for the precise statement. This extension result for proper Fredholm maps, which is a slight generalization of \cite[Lemma 4.2]{et70}, is our main technical tool for constructing maps and homotopies. It is proved in Appendix B.

Before describing our invariants for proper Fredholm maps into $\mH$, let us briefly review the finite-dimensional Pontryagin theory. Let $M$ be a closed $(k+n)$-dimensional manifold. The Pontryagin construction associates to a smooth map $f: M \rightarrow S^k$ the compact submanifold $X:=f^{-1}(y)$, where $y$ is a regular value of $f$, and endows it with the trivialization of its normal bundle which is induced by $df|_X$, seen as a section of the bundle
\[
\mathrm{hom}(TM|_X,\mR^k) \rightarrow X,
\]
after fixing an identification of $T_y S^k$ with $\mR^k$. This section satisfies $\ker df(x)=T_x X$, and one 
defines a framed submanifold of $M$ as a compact $n$-dimensional submanifold $X\subset M$ together with a section of the above bundle whose kernel at every $x\in X$ is $T_x X$. The framed submanifold $(f^{-1}(y),df|_{f^{-1}(y)})$, where $y$ is a regular value of $f$, is called the Pontryagin manifold of $f$ at $y$. 
Then one defines an equivalence relation on the set of framed submanifolds by using framed cobordisms and proves the following facts: 
\begin{enumerate}[(i)]
\item Pontryagin manifolds of a map at different regular points are framed cobordant.
\item Two maps are homotopic if and only if their Pontryagin manifolds are framed cobordant.
\item Every framed submanifold occurs as a Pontryagin manifold of some map $f:M \rightarrow S^k$.
\end{enumerate}
See \cite[Chapter 7]{mil65} for a complete account of the Pontryagin theory.

If $y\in \mH$ is a regular value of a proper Fredholm map $f: M \rightarrow \mH$ of index $n$, then $X:= f^{-1}(y)$ is a compact $n$-dimensional submanifold of $M$ and $df|_X$ can be seen as a map from $X$ into $\Phi_n(\mH)$, whose kernel at every $x\in X$ coincides with $T_x X$ (once again, we are using a global trivialization of $TM$). One is then tempted to define a framed submanifold of $M$ to be a compact $n$-dimensional submanifold $X\subset M$ together with a map $A:X \rightarrow \Phi_n(\mH)$ such that $\ker A(x)=T_x X$ for every $x\in X$. However, this definition has a disadvantage: the map $A$ does not necessarily 
extend to a map defined on the whole of $M$
, because the space $\Phi_n(\mH)$ has non-trivial topology when $\mH$ is infinite-dimensional. This means that not all pairs $(X,A)$ as above can occur as ``Pontryagin manifolds'' $(f^{-1}(y),df|_{f^{-1}(y)})$ of a proper Fredholm map $f: M \rightarrow \mH$.

Our way of overcoming this difficulty is to have the whole $M$ as domain of the framing: we define a framed submanifold of $M$ to be a pair $(X,A)$, where $X$ is a compact $n$-dimensional submanifold of $M$ and 
\[
A : M \rightarrow \Phi_n(\mH)
\]
is continuous, smooth on $X$ and such that $\ker A(x) = T_x X$ for every $x\in X$. The Pontryagin manifold of the proper Fredholm map $f: M \rightarrow \mH$ at the regular value $y\in \mH$ is then the pair $(f^{-1}(y),df)$. A framed cobordism between two framed submanifolds $(X_0,A_0)$ and $(X_1,A_1)$ is a pair $(W,B)$, where $W$ is a compact $(n+1)$-dimensional submanifold with boundary of $[0,1]\times M$, such that $\partial W = W \cap ( \{0,1\} \times M)$ and $W \cap (\{t\} \times M)$ equals $\{t\}\times X_0$ for $t$ close to $0$ and $\{t\} \times X_1$ for $t$ close to $1$, while
\[
B: [0,1]\times M\rightarrow \Phi_{n+1}(\mR\times\mH,\mH)
\]
is a continuous map, smooth on $W$, such that $\ker B(t,x) = T_{(t,x)} W$ for every $(t,x)\in W$ and $B(t,x) = 0 \oplus A_0(x)$ for all $t$ close to 0, $B(t,x) = 0 \oplus A_1(x)$ for all $t$ close to 1, for all $x\in M$. The Pontryagin manifold of $f$ changes by framed cobordism when we change the regular value.
Framed cobordism defines an equivalence relation on the set of framed submanifolds. The corresponding quotient space gives us a complete set of invariants for the homotopy classification of proper Fredholm maps. Indeed, we will prove the following theorem, where $\mathcal{F}_n^{\mathrm{prop}}[M,\mH]$ denotes the space of proper Fredholm maps of index $n$ from $M$ to $\mH$ modulo proper Fredholm homotopy:

\begin{maintheorem}
\label{main2}
Let $f,g:M\rightarrow \mH$ be proper Fredholm maps. Then $f$ is proper Fredholm homotopic to $
g$ if and only if the Pontryagin manifolds of $f$ and $g$ are framed cobordant. Moreover, the map which to each proper Fredholm map $f:M \rightarrow \mH$ of index $n$ associates the framed cobordism class of the Pontryagin manifold of a regular value induces a bijection from $\mathcal{F}_n^{\mathrm{prop}}[M,\mH]$ to the quotient space of framed $n$-dimensional compact submanifolds of $M$ modulo framed cobordism.
\end{maintheorem}

In the case of negative index $n<0$, the set $X$ in the $n$-dimensional framed submanifold $(X,A)$ is necessarily empty (we treat the empty set as a submanifold of arbitrary dimension, including negative dimension), so a $n$-dimensional framed submanifold is just given by a continuous map $A: M \rightarrow \Phi_n(\mH)$ with no further restrictions. In this case, the above theorem has the following immediate consequence:

\begin{maintheorem}
\label{main3}
Let $n$ be a negative integer. Then two proper Fredholm maps $f,g: M \rightarrow \mH$ of index $n$ are proper Fredholm homotopic if and only if their differentials are homotopic as maps from $M$ into $\Phi_n(\mH)$. Moreover, the map $f\mapsto df$ induces a bijection
\[
\mathcal{F}_n^{\mathrm{prop}}[M,\mH] \cong [M,\Phi_n(\mH)].
\]
\end{maintheorem}

Our last result concerns the specialization of Theorem \ref{main2} to index zero, which is less immediate. In order to simplify the statement, we assume $M$ to be connected. Before stating the result, we need to recall the notion of orientability of Fredholm maps of index zero, as defined by Fitzpatrick, Pejsachowicz, and Rabier in \cite{fpr94}. The fundamental group of $\Phi_0(\mH)$ is isomorphic to $\mZ_2$, and one says that a map $A: M \rightarrow \Phi_0(\mH)$ is orientable if the homomorphism
\[
A_* : \pi_1(M) \rightarrow \pi_1(\Phi_0(\mH)) \cong \mZ_2
\]
is zero. The orientability of $A: M \rightarrow \Phi_0(\mH)$ depends only on its homotopy class. The set of homotopy classes $[M,\Phi_0(\mH)]$ can be partitioned into two subsets, one given by the homotopy classes of maps $A: M \rightarrow \Phi_0(\mH)$ which are orientable, which we denote by $[M,\Phi_0(\mH)]_{\mathrm{or}}$, and the other one given by homotopy classes of non-orientable maps, which we denote by $[M,\Phi_0(\mH)]_{\mathrm{no}}$:
\[
[M,\Phi_0(\mH)] = [M,\Phi_0(\mH)]_{\mathrm{or}} \sqcup [M,\Phi_0(\mH)]_{\mathrm{no}}.
\]
An index zero Fredholm map $f: M \rightarrow \mH$ is said to be orientable if its differential $df: M \rightarrow \Phi_0(\mH)$ is orientable. An explicit example of a non-orientable map is given at the end of this section. This notion does not depend on the choice of the trivialization of $TM$ used to see $df$ as a map from $M$ into $\Phi_0(\mH)$.

To an orientable proper Fredholm map $f: M \rightarrow \mH$ of index zero we can associate an absolute degree $|\mathrm{deg}|(f)$, which is a non-negative integer defined in the following way. Let $y$ be a regular value of $f$. Then we can define an equivalence relation on the finite set $f^{-1}(y)$ by declaring two points $x_0,x_1\in f^{-1}(y)$ to be equivalent if, given a path $\gamma:[0,1] \rightarrow M$ from $x_0$ to $x_1$, the path $df\circ \gamma$ is trivial in $\pi_1(\Phi_0(\mH),GL(\mH)) \cong \mZ_2$. Due to the orientability of $f$, the triviality of $df\circ \gamma$ does not depend on the choice of the path connecting $x_0$ to $x_1$, and this equivalence relation has at most two equivalence classes. The non-negative integer $|\mathrm{deg}|(f)$ is defined to be the absolute value of the difference of the cardinalities of the two (possibly empty) equivalence classes. 

The notation $|\mathrm{deg}|(f)$ comes from the fact that the choice of an orientation of the map $f$, which means declaring one of the two equivalence classes in $f^{-1}(y)$ to be positive and the other to be negative, allows one to associate an oriented degree $\deg (f)\in \mZ$ to $f$, and $|\mathrm{deg}|(f)$ is precisely the absolute value of this integer valued degree. The oriented degree is invariant only with respect to particular proper Fredholm homotopies, and therefore is not relevant for the question we are studying, but its absolute value is a proper Fredholm invariant. This invariant refines the Caccioppoli-Smale mod 2 degree $\deg_2(f)$, which is defined as the parity of the set $f^{-1}(y)$ for $y$ a regular value of $f$ and is defined for all proper Fredholm maps of index zero, regardless of their orientability. See \cite{fpr92}, \cite{fpr94} and \cite{zr15} for more information on the oriented degree, the absolute degree and their history. 

After these preliminaries, we can state the result which classifies homotopy classes of proper Fredholm maps of index zero into $\mH$:

\begin{maintheorem}
\label{main4}
Two proper Fredholm maps $f,g: M \rightarrow \mH$ of index 0 on the connected manifold $M$ are proper Fredholm homotopic if and only if the following conditions are satisfied:
\begin{enumerate}[(i)]
\item The maps $df,dg: M \rightarrow \Phi_0(\mH)$ are homotopic and in particular are both orientable or both non-orientable.
\item If $f$ and $g$ are orientable, then $|\mathrm{deg}|(f)=|\mathrm{deg}|(g)$. If $f$ and $g$ are non-orientable, then $\deg_2(f)=\deg_2(g)$.
\end{enumerate}
Moreover, the map
\[
f \mapsto \left\{ \begin{array}{ll} \bigl([df], |\mathrm{deg}|(f)) & \mbox{if $f$ is orientable} \\ \bigl([df], \deg_2(f)) & \mbox{if $f$ is  non-orientable} \end{array} \right.
\]
induces a bijection
\[
\mathcal{F}_0^{\mathrm{prop}}[M,\mH] \cong \Bigl( [M,\Phi_0(\mH)]_{\mathrm{or}} \times \mN_0 \Bigr)\sqcup \Bigl( [M,\Phi_0(\mH)]_{\mathrm{no}} \times \mZ_2\Bigr).
\]
\end{maintheorem}
Here $\mN_0:= \{0\} \cup \mN $ denotes the set of non-negative integers. Notice the analogy with the finite-dimensional Hopf theorem on the homotopy classification of maps from a closed manifold $M$ to a sphere of the same dimension: two such maps are homotopic if and only if they have the same oriented degree - if $M$ is orientable - or the same mod 2 degree - if $M$ is non-orientable, see e.g. \cite{mil65}[p.\ 51]. In our infinite dimensional setting, the oriented degree has to be replaced by its absolute value and one has to ask that the differentials of the two maps are homotopic. 

If $M$ is simply connected, or more generally if $\pi_1(M)$ has no non-trivial homomorphisms into $\mZ_2$, then all maps $A: M \rightarrow \Phi_0(\mH)$ are orientable, and the latter bijection becomes
\[
\mathcal{F}_0^{\mathrm{prop}}[M,\mH] \cong  [M,\Phi_0(\mH)] \times \mN_0, \qquad [f] \mapsto ([df],|\mathrm{deg}|(f)).
\]
If $M$ is contractible then $[M,\Phi_0(\mH)]$ consists of only one element and the absolute degree becomes a complete invariant for the homotopy classification of proper Fredholm maps $f:M \rightarrow \mH$ of index zero:
\[
\mathcal{F}_0^{\mathrm{prop}}[M,\mH] \cong  \mN_0, \qquad [f] \mapsto |\mathrm{deg}|(f).
\]
Two Hilbert manifolds are homotopy equivalent if and only if they are diffeomorphic, cf.~\cite{bur69,mou68}. This implies in particular that $M\cong\mH$. In this case, it is easy to exhibit representatives for proper Fredholm maps of index zero of any absolute degree. Indeed, for every $n\in \mN_0$ we can consider the following smooth proper map of index zero:
\[
f_n : \mC \times \mH \rightarrow \mC \times \mH, \quad f_n(z,x) = \begin{cases}(z^n,x)\quad &n>0,\\
(|z|^2,x) &n=0.
\end{cases}
\]
When $n=0$ this map is not surjective and hence $ |\mathrm{deg}| (f_0)=0$. When $n\geq 1$ the differential of this map at every $(z,x)\in (\mC \setminus \{0\}) \times \mH$ is an isomorphism, so the $n$ points in the inverse image of the regular value $(1,0)\in \mC\times \mH$ are all pairwise equivalent and hence $ |\mathrm{deg}| (f_n)=n$. After identifying $\mC\times \mH$ with $\mH$ by a linear isomorphism, Theorem \ref{main4} implies that every proper Fredholm map $f:\mH \rightarrow \mH$ of index zero is proper Fredholm homotopic to one of the maps $f_n$, $n\in \mN_0$. The map
\[
f_{-n} : \mC \times \mH \rightarrow \mC \times \mH, \quad f_{-n}(z,x) = (\overline{z}^n,x),
\]
with $n\in \mN$, also has absolute degree $n$, and hence is proper Fredholm homotopic to $f_n$. An explicit proper Fredholm homotopy between $f_{-n}$ and $f_n$ can be easily found by noticing that $f_{-n} = f_n \circ C$, where $C$ is the real linear isomorphism
\[ 
C: \mC \times \mH \rightarrow \mC \times \mH, \qquad C(z,x) = (\overline{z},x),
\]
which can be joined to the identity within $GL(\mC \times \mH)$, as this space is connected.

We conclude this introduction by discussing an explicit example of a non-orientable proper Fredholm map of index zero with non-zero Caccioppoli-Smale degree. Let $E(\gamma^1_1)$ be the total space of the tautological line bundle $\gamma^1_1$ over $\mathbb{R}P^1$: An element of $E(\gamma_1^1)$ is a pair $(\ell,p)\in \mR P^1\times \mR^2$ with $p\in \ell$, where $\mR P^1$ is seen as the space of unoriented lines through the origin in $\mR^2$. Define the map $f:E(\gamma_1^1)\times \mH\rightarrow \mR^2\times \mH$, by
$$
f((\ell,p),x)=(p,x).
$$
We claim that this is a non-orientable index zero proper Fredholm map with $\deg_2(f)=1$. The space $E(\gamma_1^1)$ is diffeomorphic to $[0,\pi]\times \mR/\sim$ where the equivalence relation identifies $(0,r)$ with $(\pi,-r)$. In these coordinates the map $f$ is given by
\[
f(\theta,r,x)=(r\sin \theta,r \cos \theta,x).
\]
In this representation it is clearly seen to be an index zero proper Fredholm map. 
The differential of this map is
\[
df(\theta,r,x)[u,v,w]=(ur\cos \theta+v \sin\theta, -ur\sin \theta+v \cos \theta,w).
\]
Note that $df(\theta,r,x)$ is an isomorphism for all $r\not=0$, and for $r=0$ it has a one-dimensional kernel. Consider the path $\gamma:[0,\pi]\rightarrow E(\gamma_1^1)\times \mH$ given by
\[
\gamma(t):=(t,2t-\pi,0).
\]
Note that $\gamma(0)=(0,-\pi,0)=(\pi,\pi,0)=\gamma(\pi)$ so this defines a loop. The loop $df\circ \gamma$ is non-contractible in the space of Fredholm operators of index zero on $\mR^2 \times \mH$. To see this we compute
\[
df(\gamma(t))=\left(\begin{array}{ccc} (2t - \pi) \cos t &\sin t&0\\(\pi-2t) \sin t& \cos t&0\\0&0&1\end{array}\right).
\]
For each $t$, the operator $df(\gamma(t))$ is invertible, except at $t=\pi/2$, and the path goes transversely through the stratum of index zero Fredholm operators with a one dimensional kernel. This implies that the map $f$ is non-orientable, see Section~\ref{sec:nonpositive}. Every non-zero value in $\mR^2\times \mH$ is regular and has one preimage hence $\deg_2(f)=1$. 

Producing a non-orientable proper Fredholm map of index zero with vanishing Caccioppoli-Smale degree is also simple: just consider the map
\[
g: E(\gamma_1^1) \times \mR \times \mH \rightarrow \mR^2 \times \mR \times \mH, \qquad g((\ell,p),s,x) := (p,s^2,x).
\]
Computations similar to the above ones show that this map has the required properties. Theorem~\ref{main4} then implies that any non-orientable proper Fredholm map $E(\gamma^1_1)\times \mH\rightarrow \mH$ of index zero is homotopic  (after the obvious identifications) to $f$ or $g$.

\section{Setup} 

Let $\mH$ denote the separable infinite-dimensional real Hilbert space. By a Hilbert manifold we mean a paracompact Hausdorff smooth manifold modeled on $\mH$. 
Since the general linear group $GL(\mH)$ is contractible, see \cite{kui65}, the tangent bundle of every Hilbert manifold is trivial. Let $f: M \rightarrow N$ be a $C^1$ map between Hilbert manifolds. By choosing trivializations of $TM$ and $TN$, we can see the differential of $f$ as a map 
\[
df:M\rightarrow L(\mH)
\]
into the space $L(\mH)$ of bounded linear operators on $\mH$. We shall tacitly fix trivializations of all Hilbert manifold we encounter and always see the differential of a map between Hilbert manifolds in this way. When introducing notions which depend on the differential of a map, we shall discuss their behavior with respect to a change in the choice of these trivializations.

We denote by $\Phi(\mH)$ the space of bounded linear Fredholm operators on $\mH$ and by $\Phi_n(\mH)$ the component of  $\Phi(\mH)$ consisting of operators of Fredholm index $n\in \mZ$. When we wish to consider two distinct Hilbert spaces $\mH_1$ und $\mH_2$ as domain and codomain of the Fredholm operators, we write $\Phi(\mH_1,\mH_2)$ and $\Phi_n(\mH_1,\mH_2)$.

The $C^1$-map $f:M\rightarrow N$ is Fredholm if $df(x)$ belongs to $\Phi(\mH)$  for all $x$ in $M$. The Fredholm index of $df(x)$  is locally constant. Throughout this article we work with Fredholm maps $f$ such that the Fredholm index of $df(x)$ is globally constant. The common value of the Fredholm index of $df(x)$ is called the Fredholm index of the map $f$ and is denoted by $\ind f$. Therefore, the differential of a Fredholm map $f:M \rightarrow N$ of index $n$ is  a map
\[
df: M \rightarrow \Phi_n(\mH).
\]
Changing the trivializations of $TM$ and $TN$ changes $df$ by left and right multiplication with maps $M\rightarrow GL(\mH)$ and $N\rightarrow GL(\mH)$. Since $GL(\mH)$ is contractible, these maps are homotopic to the constant mapping which maps every point of $M$ into the identity, and hence the homotopy class of $df$ in $[M,\Phi_n(\mH)]$ does not depend on the choice of the trivializations of $TM$ and $TN$.

Recall Smale's generalization \cite{sma65} of Sard's Theorem.

\begin{theorem} 
\label{smale}
Let $f:M\rightarrow N$ be a $C^q$ Fredholm map with
$q>\max\{\ind f,0\}$. Then the set of regular values of $f$ is residual in $N$.
\end{theorem}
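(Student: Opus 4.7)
The plan is to reduce this to the finite-dimensional Sard theorem via the local normal form for Fredholm maps, and then to patch the local result together using the second-countability that follows from paracompactness and separability of $\mH$.

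First I would choose, for each $x_0\in M$, local charts near $x_0$ and $f(x_0)$ that bring $f$ into a Fredholm normal form. Since $df(x_0)\in\Phi_n(\mH)$, the kernel $K:=\ker df(x_0)$ has some finite dimension $k$ and the cokernel $C$ has dimension $k-n$. Splitting $\mH$ in the source as $K\oplus K^\perp$ and in the target as $\mathrm{range}(df(x_0))\oplus C$, the implicit function theorem applied to the surjective infinite-dimensional block lets me rewrite $f$ locally, after smooth changes of coordinates of class at least $C^q$, in the form
\[
f(u,v)=\bigl(u,\varphi(u,v)\bigr),
\]
where $u$ ranges in an open set of $K^\perp$, $v$ ranges in an open set of $K\cong\mR^k$, and $\varphi$ takes values in $C\cong\mR^{k-n}$. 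In these coordinates, $df(u,v)$ is surjective if and only if $\partial_v\varphi(u,v):\mR^k\to\mR^{k-n}$ is surjective; equivalently, a point $(u_0,w_0)$ in the chart is a critical value of $f$ if and only if $w_0$ is a critical value of the finite-dimensional map $\varphi(u_0,\cdot)$.

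Next I would invoke the classical Sard theorem fiberwise. The finite-dimensional source and target have dimensions $k$ and $k-n$, so the classical Sard theorem requires $\varphi(u_0,\cdot)$ to be of class $C^q$ with $q>\max\{k-(k-n),0\}=\max\{n,0\}=\max\{\ind f,0\}$, which is exactly the hypothesis. Hence, for every fixed $u_0$, the set of critical values of $\varphi(u_0,\cdot)$ has Lebesgue measure zero in $\mR^{k-n}$.

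Now I would globalize. Using that $M$ is paracompact and modeled on the separable space $\mH$, $M$ is second countable, so I can cover it by countably many compact sets $K_j$, each contained in a chart of the form above. For each $j$, the set $f(K_j\cap\mathrm{crit}(f))$ is compact, hence closed in $N$. Moreover, by the previous paragraph its slice over every $u$ is contained in a set of Lebesgue measure zero in the finite-dimensional factor; a closed set in $U\times\mR^{k-n}$ whose every slice is Lebesgue null can contain no product of open sets, so it has empty interior and is therefore nowhere dense. The full critical value set $f(\mathrm{crit}(f))=\bigcup_j f(K_j\cap\mathrm{crit}(f))$ is thus a countable union of nowhere dense closed sets, i.e.\ meager, and its complement, the set of regular values, is residual in $N$.

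The main obstacle I expect is the first step, namely the careful construction of the local normal form $(u,v)\mapsto(u,\varphi(u,v))$ with the correct regularity, so that the exponent in Sard's theorem matches $\max\{\ind f,0\}$ and no regularity is lost. The bookkeeping that two distinct charts agree on overlaps is not needed because the residual property only requires a countable cover; what matters is that in each chart the normal form yields the sharp threshold $q>\max\{\ind f,0\}$.
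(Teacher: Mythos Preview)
The paper does not give its own proof of this theorem: it merely recalls it as Smale's generalization of Sard's theorem and cites \cite{sma65}. So there is no proof in the paper to compare your proposal against; your outline is essentially the classical argument from Smale's original paper.

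That said, there is a genuine gap in your globalization step. You write that you can cover $M$ by countably many \emph{compact} sets $K_j$, each contained in a normal-form chart. This is impossible: in an infinite-dimensional Hilbert manifold, every compact subset has empty interior (by Riesz's lemma, closed balls are not compact), so by the Baire category theorem $M$ cannot be a countable union of compact sets. Consequently your argument that $f(K_j\cap\mathrm{crit}(f))$ is compact, hence closed, does not get off the ground.

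The standard fix---and this is what Smale actually does---is to use that Fredholm maps are \emph{locally proper}: every point of $M$ has an open neighbourhood $V$ such that $f|_{\overline{V}}$ is proper (this fact is quoted later in the present paper, in the proof of Lemma~\ref{lem:propfredholm}). Using second countability, cover $M$ by countably many such open sets $V_j$, each also small enough to lie in a normal-form chart. Since $\mathrm{crit}(f)$ is closed and $f|_{\overline{V_j}}$ is proper (hence closed), the set $f(\overline{V_j}\cap\mathrm{crit}(f))$ is closed in $N$. Your slice argument then applies verbatim: in the chart, this closed set has Lebesgue-null fibres over the infinite-dimensional factor, so it has empty interior and is nowhere dense. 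The union over $j$ is meager, and the regular values are residual. With this correction your proof is complete and matches Smale's.
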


The situation is even better for proper maps. 

\begin{corollary} 
Let $f:M\rightarrow N$ be a $C^q$ Fredholm map with
$q>\max\{\ind f,0\}$. If $f$ is proper then the set of its regular
values is open and dense in $N$.
\end{corollary}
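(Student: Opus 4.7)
The plan is to split the corollary into two assertions, density and openness, and to observe that density is immediate from Smale's theorem while openness is the place where properness is genuinely used.

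For density, I would note that $N$, being a paracompact manifold modeled on the separable Hilbert space $\mH$, is metrizable and locally completely metrizable, and therefore a Baire space. Since Theorem \ref{smale} supplies a residual set of regular values, this set is automatically dense in $N$.

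For openness, the strategy is as follows. Fix a regular value $y \in N$. Properness makes $X := f^{-1}(y)$ compact (possibly empty, in which case the argument still applies). At each $x \in X$ the operator $df(x) \in \Phi_n(\mH)$ is surjective, and since the subset of surjective operators is open in $\Phi_n(\mH)$, its preimage $U \subset M$ under the continuous map $df$ is an open neighborhood of $X$ on which $df$ is pointwise surjective. Every critical point of $f$ therefore lies in the closed set $C := M \setminus U$, so every critical value lies in $f(C)$. It thus suffices to show that $f(C)$ is closed in $N$ and does not contain $y$.

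The heart of the argument, and the only place where properness is invoked, is showing that $f(C)$ is closed in $N$. For this I would use the standard fact that a continuous proper map between metrizable spaces is closed, verified by sequences: if $y_k = f(x_k) \in f(C)$ converges to some $y_\infty \in N$, then $K := \{y_\infty\} \cup \{y_k : k \in \mN\}$ is compact, so $f^{-1}(K)$ is compact; a subsequence of $(x_k)$ then converges to some $x_\infty \in M$, which lies in $C$ because $C$ is closed and satisfies $f(x_\infty) = y_\infty$ by continuity. Combining closedness of $f(C)$ with the fact that $y \notin f(C)$ (because $f^{-1}(y) \subset U$), the set $N \setminus f(C)$ is an open neighborhood of $y$ in $N$, and every point in it is a regular value by the defining property of $U$. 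The main obstacle, though essentially a standard point-set lemma, is precisely this closed-map property, since without properness one can easily construct Fredholm maps whose set of regular values fails to be open.
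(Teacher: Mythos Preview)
Your proof is correct and follows essentially the same route as the paper: density from Smale's theorem plus the Baire property of $N$, and openness from the fact that critical points form a closed set (surjective operators being open in $\Phi_n(\mH)$) together with properness implying that $f$ is a closed map. The paper states the closed-map step in one line, whereas you spell it out via sequences; your detour through the compact fiber $X=f^{-1}(y)$ is harmless but unnecessary, since once $C$ is closed and $f$ is closed the complement $N\setminus f(C)$ is open without any reference to a particular $y$.
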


\begin{proof} 
The density follows from Theorem \ref{smale}, due to the fact that $N$ is a Baire space, because it admits a complete metric. Since the set of surjective operators is open in the space of all bounded operators, the set of regular points of $f$ is open. Therefore, the set of critical points of $f$ is closed. Since $f$ is proper, it is in particular a closed map, and hence the set of critical values of $f$ is closed. We conclude that the set of regular values is open.
\end{proof}

In order to avoid tracking differentiability degrees, we assume that all maps henceforth are smooth unless specified. We refer to Appendix \ref{weakening} for a short discussion about how to deal with maps having less regularity. 

A  Fredholm homotopy between two Fredholm maps $f,g: M \rightarrow N$ of index $n$ is a smooth Fredholm map $h:[0,1]\times M \rightarrow N$ such that $h(0,\cdot)=f$ and $h(1,\cdot)=g$.  The map $h$ has necessarily index $n+1$. By composition with a smooth function $[0,1]\rightarrow [0,1]$ having value 0 near 0 and 1 near 1, a Fredholm homotopy between $f$ and $g$ can be made to agree with $f$ on $[0,\epsilon]\times M$ and with $g$ on $[1-\epsilon]\times M$, for some $\epsilon \in (0,1/2)$. The possibility of this modification implies that being Fredholm homotopic is an equivalence relation. The symbol $\fr[M,N]$ denotes the set of Fredholm homotopy classes of maps from $M$ to $N$.

A Fredholm homotopy $h:[0,1] \times M \rightarrow N$ between two proper Fredholm maps $f$ and $g$ which is a proper map is called a proper Fredholm homotopy, and $f$ and $g$ are said to be proper Fredholm homotopic. This implies that for every $t\in [0,1]$ the map $h(t,\cdot)$ is proper, but the latter condition is in general weaker. To see this explicitly, let $\omega:\mR\rightarrow\mR$ be a smooth function with compact support such that $\omega(0)=1$. Then the function $h:[0,1]\times \mR\rightarrow \mR$
$$
h(t,x)=\begin{cases}(1-\omega(x-\frac{1}{t}))x\qquad &t>0\\
x&t=0
\end{cases},
$$
is a smooth function and the map $x\mapsto h(t,x)$ is proper for every $t$ as it equals the identity outside a compact set. However $h$ is not a proper map as $h(1/n,n)=0$ but the sequence $(1/n,n)$ does not have a convergent subsequence. Under certain uniform estimates, families of proper maps are proper, see \cite{zr15}.

The symbol $\frp[M,N]$ denotes the set of proper Fredholm homotopy classes of maps from $M$ to $N$. If we feel the need to specify the index we denote this with a subscript, as in $\fr_n[M,N]$ and $\frp_n[M,N]$.

We shall also make use of the following standard smoothing result:

\begin{lemma}
\label{smoothing}
Let $U$ and $V$ be open subsets of the Hilbert manifold $M$ with $\overline{U}\subset V$ and let $N$ be an open subset of a Banach space $X$. Let $F:M \rightarrow N$ be a continuous map such that $F|_V$ is smooth. Then there exists a smooth map $\tilde{F}:M \rightarrow N$ such that $\tilde{F}|_U = F|_U$ and $\tilde{F}$ is homotopic to $F$ relative $U$.
\end{lemma}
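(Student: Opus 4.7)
The plan is to interpolate between $F$ on a neighborhood of $\overline{U}$ and a smooth approximation of $F$ elsewhere, exploiting the linear structure of the ambient Banach space $X$ in which $N$ sits as an open subset. The key analytic input is the existence of smooth partitions of unity on the paracompact Hilbert manifold $M$, which is available because the squared norm on $\mH$ is smooth and therefore provides smooth bump functions.

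First I would choose nested open sets
\[
\overline{U} \subset U_1 \subset \overline{U_1} \subset V_1 \subset \overline{V_1} \subset V,
\]
together with a smooth cutoff $\chi: M \to [0,1]$ equal to $1$ on $U_1$ and vanishing on $M \setminus V_1$, obtained by a smooth Urysohn argument. Next I would construct a smooth map $G: M \to N$ which is pointwise so close to $F$ that for every $x \in M$ the entire segment $\{(1-t)F(x) + tG(x) : t \in [0,1]\}$ lies in $N$. To do so, for each $x_0 \in M$ pick an open ball $B(F(x_0), \epsilon_{x_0}) \subset N$ and then a neighborhood $W_{x_0}$ of $x_0$ with $F(W_{x_0}) \subset B(F(x_0), \epsilon_{x_0}/3)$; take a locally finite refinement $\{W_i\}_{i\in I}$ with subordinate smooth partition of unity $\{\rho_i\}_{i \in I}$ and base points $p_i \in W_i$, and set
\[
G(x) := \sum_{i \in I} \rho_i(x) F(p_i).
\]
With the cover chosen finely enough, for every $x \in M$ all of the $F(p_i)$ with $\rho_i(x) > 0$ lie in a common open ball inside $N$, so $G$ is smooth, takes values in $N$, and the segment from $F(x)$ to $G(x)$ stays in $N$.

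Finally I would define
\[
\tilde{F}(x) := \chi(x) F(x) + (1-\chi(x)) G(x).
\]
Smoothness of $\tilde{F}$ on $V$ is clear because $\chi$, $G$ and $F|_V$ are smooth; on the open set $M \setminus \overline{V_1}$ the cutoff vanishes, so $\tilde{F}$ coincides with the smooth map $G$ there, and the two descriptions agree on $V \setminus \overline{V_1}$. On $U \subset U_1$ we have $\chi \equiv 1$, which gives $\tilde{F}|_U = F|_U$. The required homotopy is the straight-line interpolation
\[
H(s,x) := F(x) + s(1-\chi(x)) \bigl(G(x) - F(x)\bigr), \qquad s \in [0,1],
\]
which is continuous, takes values in $N$ by the segment property, equals $F$ on $U$ for every $s$, and joins $F$ to $\tilde{F}$. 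The chief obstacle I expect is calibrating the cover $\{W_i\}$ and the partition of unity so that every pointwise convex combination arising in $G$, as well as every segment $[F(x),G(x)]$, stays inside $N$; this is routine given smooth partitions of unity on $M$ but must be arranged with care using local finiteness and the openness of $N$ in $X$.
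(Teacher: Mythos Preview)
Your proof is correct and follows essentially the same route as the paper: a smooth partition of unity on $M$, affine combinations of values of $F$ that are controlled to stay in the open set $N$, and a straight-line homotopy. The only cosmetic difference is that the paper merges your cutoff $\chi$ and your approximating partition $\{\rho_i\}$ into a single partition $\{\phi_n\}_{n\ge 0}$ with $\phi_0$ playing the role of $\chi$, so that $\tilde{F}=\phi_0 F+\sum_{n\ge 1}\phi_n(x)F(x_n)$ in one stroke.
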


\begin{proof}
By the paracompactness and Hilbert structure of $M$ and by the continuity of $F$ we can find a countable smooth partition of unity $\{\phi_n\}_{n\in \mN_0}$, $\mN_0 =\{0\} \cup \mN$, with the following properties: 
\begin{enumerate}[(i)]
\item $\phi_0=1$ on $U$;
\item $\mathrm{supp}\, \phi_0 \subset V$;
\item For every finite subset $J\subset \mN_0$ and every choice of a point $x_j$ in the support of $\phi_j$, $j\in J$, we have
\[
\bigcap_{j\in J} \mathrm{supp}\, \phi_j \neq \emptyset \qquad \Rightarrow \qquad \mathrm{conv}\, \{ F(x_j) \mid j\in J\} \subset N.
\]
\end{enumerate}
Fix a point $x_n$ in the support of $\phi_n$ for every $n\in \mN$. 
By property (iii), the map 
\[
\tilde{F}(x) := \phi_0(x) F(x) + \sum_{n\in \mN} \phi_n(x) F(x_n), \qquad \forall x\in M,
\]
takes values into $N$. By (ii) and the smoothness of $F$ on $V$, the map $\tilde{F}$ is smooth on $M$. By (i), it agrees with $F$ on $U$. Using again (iii), we deduce that also the map 
\[
H(t,x) := t \tilde{F}(x) + (1-t) F(x) = \bigl( t \phi_0(x) + 1-t\bigr) F(x) + t \sum_{n\in \mN} \phi_n(x) F(x_n), \qquad \forall (t,x) \in [0,1]\times M,
\]
takes values into $N$. The map $H$ is a homotopy between $F$ and $\tilde{F}$ relative $U$.
\end{proof}

We shall use the above lemma either to smooth continuous maps $A: M \rightarrow \Phi_n(\mH)$, using the fact that $\Phi_n(\mH)$ is an open subset of the space of bounded linear operators, or to smoothen continuous maps $f: M \rightarrow N$ into a Hilbert manifold, using the fact that every Hilbert manifold can be embedded as an open subset of $\mH$ by a well known result of Eells and Elworthy, see \cite{ee70}.

\section{Fredholm extensions}

Given a Fredholm map $f$ defined on a subset of $M$ to $N$ we wish to discuss the problem of finding a Fredholm map $\overline f:M\rightarrow N$ that extends $f$. There are two obvious homotopy theoretic necessary conditions for such an extension to exist. First of all the map $f$ needs to have a continuous $N$-valued extension, and secondly the map $df$ needs to have a continuous extension into $\Phi(\mH)$, as this space has non-trivial topology. The next result shows that these are the only obstructions. The proof is contained in Appendix~\ref{app:extension}.

\begin{theorem}[Extension of Fredholm maps]
\label{thm:extensionfredholm}
Let $M,N$  be Hilbert manifolds and $U,V$ be open subsets of $M$ with $\overline U\subset V$. Let $f:V \rightarrow N$ be a Fredholm map of index $n$ and assume that $f$ and $df$ satisfy:
\begin{itemize}
\item[(i)] there is a continuous map $g: M \rightarrow N$ such that $g\bigr|_V=f$;
\item[(ii)] there is a continuous map $A:M \rightarrow \Phi_n(\mH)$ such that $A\bigr|_V= df$.
\end{itemize}
Then there exists a Fredholm map $\overline{f}:M \rightarrow N$ of index $n$ such that $\overline{f}\bigr|_U=f\bigr|_U$ and:
\begin{itemize}
\item[(i')] the map $\overline f:M\rightarrow N$ is homotopic to $g$ relative $U$;
\item[(ii')] the map $d\overline f:M\rightarrow \Phi_n(\mH)$ is homotopic to $A$ relative $U$. 
\end{itemize}
\end{theorem}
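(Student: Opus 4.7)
The plan is to reduce the problem to constructing an $\mH$-valued extension and then to build it by a partition-of-unity formula whose local pieces are affine functions with linear parts sampled from $A$. By the Eells--Elworthy theorem, embed $N$ as an open subset of $\mH$ and also $M$ as an open subset of $\mH$, so that $f,g$ become $\mH$-valued and the subtraction $x-y$ is defined for $x,y\in M$. Apply Lemma~\ref{smoothing} to $g$, and separately to $A$ (using that $\Phi_n(\mH)$ is open in the Banach space of bounded operators on $\mH$), to replace them by smooth maps still equal to $f$ and $df$ on $U$ and homotopic to the originals relative $U$. Under these reductions, the conclusions (i') and (ii') need only be established for the smoothed replacements, and concatenation with the smoothing homotopies at the end will recover the claims for the original $g$ and $A$.

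\textbf{Local-to-global construction of $\overline f$.} Choose a locally finite open cover $\{W_\alpha\}_{\alpha\in I}$ of $M$ with smooth partition of unity $\{\phi_\alpha\}$ subject to the following: a distinguished index $0\in I$ with $\phi_0\equiv 1$ on a neighborhood of $\overline U$ and $\mathrm{supp}\,\phi_0\subset V$; every other $W_\alpha$ small enough, both in $\mH$-diameter and in the diameter of $A(W_\alpha)$ inside $\Phi_n(\mH)$, that for any choice of basepoints $x_\alpha\in W_\alpha$ the convex combination $\phi_0(x)\,dg(x)+\sum_{\alpha\ne 0}\phi_\alpha(x)A(x_\alpha)$, together with its sufficiently small perturbations, stays in $\Phi_n(\mH)$. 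Such a cover exists by paracompactness of $M$, continuity of $A$ and $dg$, and openness of $\Phi_n(\mH)$ in $L(\mH)$. Then set
\[
\overline f(x):=\phi_0(x)\,g(x)+\sum_{\alpha\ne 0}\phi_\alpha(x)\bigl[\,g(x_\alpha)+A(x_\alpha)(x-x_\alpha)\,\bigr].
\]
On $U$ only $\phi_0$ contributes, so $\overline f=g=f$. A direct computation, using $\sum_\alpha d\phi_\alpha=0$ to rewrite the terms involving derivatives of the bump functions as differences, gives
\[
d\overline f(x)=\phi_0(x)\,dg(x)+\sum_{\alpha\ne 0}\phi_\alpha(x)\,A(x_\alpha)+\mathcal E(x),
\]
where $\mathcal E(x)$ is an error term whose norm is controlled by the mesh of the cover (it is a finite sum of outer-product-type operators built from $d\phi_\alpha(x)$ and the first-order remainders $g(x_\alpha)-g(x)+A(x_\alpha)(x-x_\alpha)$). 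The leading part is a convex combination of Fredholm operators all close to $A(x)$, hence in $\Phi_n(\mH)$, and by the choice of cover $\mathcal E(x)$ is small enough to keep $d\overline f(x)$ Fredholm of index $n$.

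\textbf{Homotopies and the main obstacle.} The homotopy (i') is the linear interpolation $h_t:=t\overline f+(1-t)g$; on $U$ we have $\overline f=g=f$, so $h_t=f$ there for all $t$. The homotopy (ii') is $t\,d\overline f+(1-t)A$, constant and equal to $df$ on $U$. Both of these are homotopies by Fredholm maps/operators by the same convex-combination argument used to verify Fredholmness of $d\overline f$. Concatenating with the smoothing homotopies from the setup phase produces the required homotopies to the original $g$ and $A$ relative $U$. The main obstacle is the simultaneous control of the convex-combination leading term and the perturbation $\mathcal E$ so that $d\overline f$, as well as the differential of the interpolation used in the homotopy, genuinely lies in the \emph{open} subset $\Phi_n(\mH)\subset L(\mH)$; this is handled by refining the cover $\{W_\alpha\}$ using uniform continuity of $A$ and $dg$ on each patch and the openness of $\Phi_n(\mH)$, in essentially the same quantitative spirit as the proof of Lemma~\ref{smoothing}.
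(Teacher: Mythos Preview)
Your construction is close in spirit to the paper's and can be made to work, but two points need correction.

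First, your claim that $\mathcal E(x)$ can be made small in operator norm by refining the cover is not correct: the gradients $d\phi_\alpha$ scale like the inverse of the mesh, and outside the set where $A=dg$ the ``first-order remainders'' $g(x_\alpha)-g(x)+A(x_\alpha)(x-x_\alpha)$ are only $O(\|x-x_\alpha\|)$, not $o(\|x-x_\alpha\|)$, so the product does not shrink. The right argument --- which the paper uses for its analogous term $A_1$ --- is simply that $\mathcal E(x)$ has finite rank (you already observe it is a finite sum of rank-one outer products), and finite-rank perturbations preserve the Fredholm property and the index. This immediately gives $d\overline f(x)\in\Phi_n(\mH)$ and the same for the interpolation $t\,d\overline f+(1-t)A$, with no smallness needed.

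Second, you never check that $\overline f$ and the homotopy $h_t=t\overline f+(1-t)g$ take values in the open set $N\subset\mH$. This requires its own fineness condition on the cover, bounding $\|A(x_\alpha)\|\cdot\mathrm{diam}(W_\alpha)$ and the oscillation of $g$ on each $W_\alpha$ against the distance from $g(x)$ to $\partial N$; the paper's conditions (c) and (e) do exactly this.

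A minor bookkeeping issue: you smooth $g$ and $A$ so that they agree with $f$ and $df$ only on $U$, but then take $\mathrm{supp}\,\phi_0\subset V$; on $\mathrm{supp}\,\phi_0\setminus U$ you then do not know $dg=A$, which you need for the leading convex combination $\phi_0\,dg+\sum_{\alpha\ne 0}\phi_\alpha A(x_\alpha)$ to be close to $A(x)$. Insert an intermediate open set $U'$ with $\overline U\subset U'\subset\overline{U'}\subset V$, apply Lemma~\ref{smoothing} so that equality persists on $U'$, and take $\mathrm{supp}\,\phi_0\subset U'$.

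With these fixes your argument goes through. The main methodological difference from the paper is how agreement on $U$ is enforced: you use a distinguished bump $\phi_0$ equal to $1$ near $\overline U$, whereas the paper treats all partition elements symmetrically and replaces your linear term $A(x_\alpha)(x-x_\alpha)$ by the integral $\int_0^1 A\bigl(x_j+s(x-x_j)\bigr)(x-x_j)\,ds$; on $U$ (where $A=df$ along the whole segment, guaranteed by a star-refinement condition) this integral equals $f(x)-f(x_j)$ by the fundamental theorem of calculus, which is how the paper recovers $\overline f|_U=f|_U$ without a cutoff.
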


This extension result has various interesting corollaries, the first of which is Theorem \ref{main1} from the Introduction:

\begin{corollary}
\label{cor:homotopyclassesfredholm}
Two smooth Fredholm maps $f_0,f_1:M\rightarrow N$ of index $n$ are Fredholm homotopic if and only if $f_0$ is homotopic to $f_1$ and $df_0: M \rightarrow \Phi_n(\mathbb{H})$ is homotopic to $df_1: M \rightarrow \Phi_n(\mathbb{H})$. Moreover, the map $f \mapsto ([f],[df])$ induces a bijection 
\[
\mathcal{F}_n[M,N] \cong
[M,N] \times [M,\Phi_n(\mathbb{H})].
\]

\end{corollary}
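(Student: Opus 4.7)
The plan is to reduce everything to the extension theorem \ref{thm:extensionfredholm}: the forward direction is essentially tautological once one recognizes the spatial derivative of a Fredholm homotopy as a $\Phi_n(\mH)$-valued map, while the converse implication and the surjectivity of $f\mapsto([f],[df])$ both reduce to applications of the extension theorem, the first to the cylinder $[0,1]\times M$ and the second to $M$ itself with $U=V=\emptyset$.

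For the forward direction, let $h:[0,1]\times M\to N$ be a Fredholm homotopy of index $n+1$ between $f_0$ and $f_1$. Then $h$ itself is a continuous homotopy $f_0\simeq f_1$. For the differentials, I would observe that
\[
dh(t,x)(s,v) \;=\; s\,\partial_t h(t,x)\,+\,\partial_x h(t,x)\,v
\]
differs from the operator $T(t,x)\colon(s,v)\mapsto \partial_x h(t,x)v$ by a rank-one (hence compact) perturbation, so $T(t,x)\in\Phi_{n+1}(\mR\times\mH,\mH)$. Since $\ker T(t,x)=\mR\oplus\ker\partial_x h(t,x)$ and $\mathrm{im}\,T(t,x)=\mathrm{im}\,\partial_x h(t,x)$, it follows that $\partial_x h(t,x)\in\Phi_n(\mH)$, and the continuous map $\partial_x h:[0,1]\times M\to\Phi_n(\mH)$ realizes the required homotopy $df_0\simeq df_1$.

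For the converse, suppose $H:[0,1]\times M\to N$ realizes $f_0\simeq f_1$ and $A:[0,1]\times M\to\Phi_n(\mH)$ realizes $df_0\simeq df_1$. After reparametrizing in $t$ I may assume that both $H$ and $A$ are constant in $t$ on $[0,1/3]\times M$ and on $[2/3,1]\times M$, with the respective values $(f_0,df_0)$ and $(f_1,df_1)$. Set $V:=([0,1/3)\cup(2/3,1])\times M$ and $U:=([0,1/4)\cup(3/4,1])\times M$, and define $f:V\to N$ by $f(t,x)=f_0(x)$ for $t<1/3$ and $f(t,x)=f_1(x)$ for $t>2/3$; this is a Fredholm map of index $n+1$ whose differential $df(t,x)(s,v)=A(t,x)v$ on $V$ extends continuously to $B(t,x)(s,v):=A(t,x)v$ on all of $[0,1]\times M$, taking values in $\Phi_{n+1}(\mR\times\mH,\mH)$ by the same kernel–image computation as above. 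Theorem \ref{thm:extensionfredholm} applied with the continuous extensions $H$ and $B$ then produces a smooth Fredholm map $\bar h:[0,1]\times M\to N$ of index $n+1$ equal to $f_0$ near $t=0$ and to $f_1$ near $t=1$: the desired Fredholm homotopy.

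Surjectivity of $f\mapsto([f],[df])$ follows from Theorem \ref{thm:extensionfredholm} with $U=V=\emptyset$, while injectivity is the converse implication just established. The only technical nuisance I foresee is that $[0,1]\times M$ is a manifold with boundary whereas Theorem \ref{thm:extensionfredholm} is stated in the boundaryless setting; this is easily circumvented by working throughout in the open cylinder $(-\epsilon,1+\epsilon)\times M$, extending $H$, $A$ and $f$ by the constant-in-$t$ convention on the collars, and restricting the outcome back to $[0,1]\times M$ at the end.
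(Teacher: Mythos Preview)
Your proposal is correct and follows essentially the same route as the paper: reduce both the converse implication and surjectivity to Theorem~\ref{thm:extensionfredholm}, working on a cylinder for the former and with $U=V=\emptyset$ for the latter. The paper works directly on $\mR\times M$ rather than $[0,1]\times M$, which is exactly the fix you anticipate at the end, and it inserts an (inessential) smoothing of the homotopies via Lemma~\ref{smoothing}; your treatment of the forward direction is in fact more careful than the paper's, which simply asserts that $dh$ furnishes the homotopy $df_0\simeq df_1$ without spelling out the rank-one perturbation argument.
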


\begin{proof}
If $f_0$ and $f_1$ are homotopic through the smooth Fredholm map $h:[0,1] \times M \rightarrow N$ of index $n+1$, then $h$, resp.\ $dh$, is a homotopy between $f_0$ and $f_1$, resp.\ $df_0$ and $df_1$.

Conversely, assume that $g:[0,1]\times M \rightarrow N$ is a homotopy between $f_0$ and $f_1$ and $\tilde{A}:[0,1]\times M \rightarrow \Phi_n(\mH)$ is a homotopy between $df_0$ and $df_1$. 
By considering their trivial extensions, we may assume $g$ and $\tilde{A}$ are defined on the Hilbert manifold $\mR \times M$. By reparametrizing the homotopies, we can also assume that
\[
\begin{split}
g(t,x) &= \left\{ \begin{array}{ll} f_0(x) & \forall (t,x) \in (-\infty,1/3)\times M, \\ f_1(x) & \forall (t,x) \in (2/3,+\infty) \times M, \end{array} \right. \\
\tilde{A}(t,x) &= \left\{ \begin{array}{ll} df_0(x) & \forall (t,x) \in (-\infty,1/3)\times M, \\ df_1(x) & \forall (t,x) \in (2/3,+\infty) \times M. \end{array} \right.
\end{split}
\]
Up to a regularization by a smooth partition of unity, we can further assume that $g$ and $\tilde{A}$ are smooth (see Lemma \ref{smoothing}).  
We also define the smooth map
\[
A : \mR \times M \rightarrow \Phi_{n+1}(\mR\times \mH,\mH)
\]
as
\[
A(t,x)(s,u) = \tilde{A}(t,x) u \qquad \forall (t,x)\in \mR \times M, \; \forall (s,u) \in \mR \times \mH,
\]
so that
\[
dg(t,x) = A(t,x) \qquad \forall (t,x)\in \bigl( \mR \setminus [1/3,2/3]\bigr) \times M.
\]
We now apply Theorem \ref{thm:extensionfredholm} to the data
\[ 
\begin{split}
& U :=  \bigl( \mR \setminus [1/4,3/4]\bigr) \times M, \quad V:=  \bigl( \mR \setminus [1/3,2/3]\bigr) \times M,  \\ & g:\mR \times M \rightarrow N, \quad A: \mR\times M \rightarrow \Phi_{n+1}(\mR \times \mH,\mH). 
\end{split}
\]
We obtain a Fredholm map $h: \mR \times M \rightarrow N$ of index $n+1$ such that $h|_{U} = g|_{U}$. In particular, the restriction of $h$ to $[0,1]\times M$ is the required Fredholm homotopy between $f_0$ and $f_1$. This proves the first part of the statement.

Now consider the map
\[
\mathcal{F}[M,N] \rightarrow [M,N]\times [M,\Phi_n(\mH)]
\]
which is induced by $f \mapsto ([f],[df])$. By the statement that we just proved, this map is well defined and injective. In order to prove that this map is surjective, consider two arbitrary continuous maps $g: M \rightarrow N$ and $A: M \rightarrow \Phi_n(\mH)$. By applying Theorem  \ref{thm:extensionfredholm} to these maps together with $U=V=\emptyset$, we obtain a Fredholm map $\overline{f}: M \rightarrow N$ of index $n$ such that $\overline{f}$ is homotopic to $f$ and $d\overline{f}$ is homotopic to $A$. This proves the surjectivity. 
\end{proof}
\begin{corollary}
\label{cor:onehomotopyclass}
There is exactly one homotopy class of Fredholm maps $f:\mH\rightarrow \mH$ for each index $n$. 
\end{corollary}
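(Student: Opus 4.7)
The plan is to apply Corollary \ref{cor:homotopyclassesfredholm} with $M=N=\mH$, which yields the bijection
\[
\mathcal{F}_n[\mH,\mH] \cong [\mH,\mH] \times [\mH,\Phi_n(\mH)].
\]
So it suffices to show that both factors on the right-hand side are singletons.

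This reduces immediately to the observation that $\mH$ is contractible (as a topological vector space the straight-line homotopy $H(t,x)=(1-t)x$ contracts it to the origin). Therefore, for any topological space $X$, every continuous map $\mH\to X$ is null-homotopic: if $F:\mH\to X$ is continuous and $h_t:\mH\to\mH$ is a contraction with $h_0=\id$ and $h_1\equiv 0$, then $F\circ h_t$ is a homotopy from $F$ to the constant map $F(0)$, and any two constant maps into the path-connected space $X$ (which in our applications $X=\mH$ and $X=\Phi_n(\mH)$ both are) are homotopic. In particular $[\mH,\mH]$ and $[\mH,\Phi_n(\mH)]$ each consist of a single element.

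Combining these two facts, $\mathcal{F}_n[\mH,\mH]$ is a singleton, which is exactly the claim. I do not expect any genuine obstacle here; the result is an immediate packaging of Corollary \ref{cor:homotopyclassesfredholm} with the contractibility of $\mH$ and the path-connectedness of $\Phi_n(\mH)$. One small point worth recording is that $\Phi_n(\mH)$ is by definition the index-$n$ connected component of $\Phi(\mH)$, so path-connectedness is automatic and no non-triviality issue arises in asserting that the homotopy class of a constant map in $\Phi_n(\mH)$ is well-defined.
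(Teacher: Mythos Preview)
Your proof is correct and follows essentially the same approach as the paper: apply Corollary \ref{cor:homotopyclassesfredholm} with $M=N=\mH$, then use the contractibility of $\mH$ to conclude that $[\mH,\mH]$ is a singleton and that $[\mH,\Phi_n(\mH)]$ reduces to the set of path components of $\Phi_n(\mH)$, which is a single point since $\Phi_n(\mH)$ is by definition a connected component. Your write-up is slightly more verbose but the logic is identical.
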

\begin{proof}
Take $M=N=\mH$, which is contractible, and apply Corollary \ref{cor:homotopyclassesfredholm}. Hence $[M,N]$ has exactly one element, and $[M,\Phi_n(\mH)]$ equals the set of path components of $\Phi_n(\mH)$. This space is path connected, which proves the corollary.
\end{proof}

Theorem \ref{thm:extensionfredholm} is easier to apply in the case that $N=\mH$ and has the following consequence.

\begin{corollary}
\label{fredextH}
Let $M$ be a Hilbert manifold and $U,W$  be open subsets of $M$ such that $\overline U\subset W$. Let $A: M\rightarrow \Phi_n(\mH)$ be a continuous map and $f: W\rightarrow \mH$ a smooth map such that $df= A|_W$. Then there exists a Fredholm map $\overline f:M\rightarrow \mH$ such that $\overline f|_U = f|_U$ and $d\overline{f}: M \rightarrow \Phi_n(\mH)$ is homotopic to $A$ relative $U$.  
\end{corollary}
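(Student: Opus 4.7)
\medskip

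The plan is to deduce the corollary directly from Theorem \ref{thm:extensionfredholm} applied with $N = \mH$. The map $A$ already provides hypothesis (ii) of that theorem on any open set contained in $W$, so the only thing to arrange is hypothesis (i): a continuous extension of (the restriction of) $f$ to all of $M$.

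First I would use the paracompactness of $M$ and the Hausdorff/normality properties of Hilbert manifolds to select an open set $V$ with $\overline U \subset V \subset \overline V \subset W$. Next I would construct a smooth bump function $\chi : M \rightarrow [0,1]$ with $\chi \equiv 1$ on a neighborhood of $\overline U$ contained in $V$ and with $\mathrm{supp}\,\chi \subset V$; such bump functions exist on any Hilbert manifold, since $\|\cdot\|^2$ is smooth on $\mH$ and $M$ admits smooth partitions of unity. Then define
\[
g(x) := \begin{cases} \chi(x) f(x) & \text{if } x \in W, \\ 0 & \text{if } x \in M \setminus \mathrm{supp}\,\chi. \end{cases}
\]
The two cases agree on their overlap (where $\chi = 0$), so $g : M \rightarrow \mH$ is well defined and continuous, and $g|_V = f|_V$ by the choice of $\chi$.

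With this in hand, I would invoke Theorem \ref{thm:extensionfredholm} applied to the open sets $U \subset V$, the Fredholm map $f|_V : V \rightarrow \mH$ (which is Fredholm of index $n$ because $d(f|_V) = A|_V$ takes values in $\Phi_n(\mH)$), the continuous extension $g : M \rightarrow \mH$ of $f|_V$, and the continuous extension $A : M \rightarrow \Phi_n(\mH)$ of $d(f|_V)$. The theorem produces a Fredholm map $\overline f : M \rightarrow \mH$ of index $n$ satisfying $\overline f|_U = f|_U$ and such that $d\overline f$ is homotopic to $A$ relative $U$, which is exactly the conclusion.

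There is essentially no hard step here: the only nontrivial ingredient beyond the already-proved Theorem \ref{thm:extensionfredholm} is the existence of the smooth bump function $\chi$, and that follows from standard smooth partition-of-unity arguments on Hilbert manifolds. The benefit of the target being $\mH$ is precisely that its contractibility (here used in the stronger form that $\mH$ is a linear space with a distinguished basepoint $0$) turns the homotopy-theoretic extension of $f$ into a triviality via the pointwise formula $\chi f$.
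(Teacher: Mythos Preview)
Your argument is essentially correct and follows the same route as the paper: both proofs reduce the corollary to Theorem \ref{thm:extensionfredholm} after producing a continuous extension $g:M\to\mH$ of $f$ near $\overline U$. The paper obtains $g$ by invoking Dugundji's extension theorem, whereas you exploit the linear structure of the target more directly via the cutoff $\chi f$; your approach is a touch more elementary here and makes the role of the contractible target explicit.

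One small slip: with $\chi\equiv 1$ only on a neighborhood $V'\subset V$ of $\overline U$ and $\mathrm{supp}\,\chi\subset V$, you do \emph{not} get $g|_V=f|_V$, only $g|_{V'}=f|_{V'}$. Either take $\chi\equiv 1$ on $\overline V$ with $\mathrm{supp}\,\chi\subset W$ (possible since $\overline V\subset W$), or simply apply Theorem \ref{thm:extensionfredholm} with the pair $U\subset V'$ instead of $U\subset V$. Either fix is immediate and the rest of the argument goes through unchanged.
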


\begin{proof} Let $V$ be an open neighborhood of $\overline{U}$ such that $\overline{V} \subset W$.
Since $M$ is metrizable and $\mH$ is locally convex, Dugundji's generalization of Tietze's extension theorem \cite{dug51} provides us with a continuous map $g:M\rightarrow \mH$ such that $g\bigr|_{\overline{V}}=f\bigr|_{\overline{V}}$. The hypothesis of Theorem~\ref{thm:extensionfredholm} are satisfied by the data $M$, $N$, $U$, $V$, $f$, $g$, $A$, and from this theorem we immediately get the required map $\overline f$. 
\end{proof}

Corollary \ref{cor:onehomotopyclass} shows that the homotopy theory for Fredholm maps between Hilbert spaces is quite poor. However the situation becomes richer when we consider proper Fredholm maps.

\section{Proper Fredholm extensions into Hilbert spaces}

The aim of this section is to discuss the extension problem for proper Fredholm maps. For sake of simplicity, we just consider maps into the Hilbert space $\mH$. If in Corollary \ref{fredextH} we assume the Fredholm map $f: W \rightarrow \mH$ to be proper on a closed neighborhood of $\overline{U}$, we cannot in general require the extension $\overline{f}$ to be proper (see Example \ref{nopropext} below). However, this becomes true if we add the assumption that $f|_{\partial U}$ is not surjective. More precisely, we have the following result, whose proof follows closely the proof of \cite[Lemma 4.2]{et70} and is contained in Appendix~\ref{app:properextention} below.

\begin{theorem}[Extension of proper Fredholm maps]
\label{thm:totalextension}
Let $M$ be a manifold modeled on $\mH$, possibly with boundary.
Let $U\subset V\subset W$ be open subsets of $M$ such that $\overline U\subset V \subset \overline V \subset W$. Let $f:W\rightarrow \mH$ be a Fredholm map of index $n$. Suppose that:
\begin{enumerate}[(i)]
\item there exists a continuous map $A: M \rightarrow \Phi_n(\mH)$ such that $A|_W= df$; 
\item the map $f|_{\overline V}$ is proper;
\item there exists a point $z$ in $\mH \setminus f(\partial U)$.
\end{enumerate}
Then there exists a proper Fredholm map $\overline f:M\rightarrow \mH$ of index $n$ such that $\overline f|_U = f|_U$ and
\begin{enumerate}[(i')]
\item $d\overline f$ is homotopic to $A$ relative $U$;
\item there exists a neighborhood $Z\subset \mH$ of $z$ such that $\overline f^{-1}(Z)=f^{-1}(Z)\cap U$. 
\end{enumerate}
\end{theorem}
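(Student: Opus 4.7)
My plan is to derive Theorem~\ref{thm:totalextension} from the non-proper Fredholm extension result (Theorem~\ref{thm:extensionfredholm}, applied as Corollary~\ref{fredextH}) by a scalar modification in a fixed direction, following the strategy of~\cite[Lemma~4.2]{et70}.

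First I would use hypotheses (ii) and (iii) to separate $z$ from $f(\partial U)$. Since $\overline U \subset V$, the boundary $\partial U = \overline U \setminus U$ is closed in $\overline V$, so $f|_{\partial U}$ is proper; as proper continuous maps to a metric space have closed image, $f(\partial U)$ is closed in $\mH$. Hence there is $\epsilon > 0$ with $\overline{B(z, 3\epsilon)} \cap f(\partial U) = \emptyset$, and by continuity and paracompactness I can choose an open set $U^*$ with $\overline U \subset U^* \subset \overline{U^*} \subset V$ and $f(\overline{U^*} \setminus U) \cap B(z, 2\epsilon) = \emptyset$. Applying Corollary~\ref{fredextH} to the data $(U^*, W, f, A)$ gives a Fredholm map $\tilde f : M \rightarrow \mH$ of index $n$ with $\tilde f|_{U^*} = f|_{U^*}$ and $d\tilde f \simeq A$ relative $U^*$; by continuity $\tilde f|_{\overline{U^*}} = f|_{\overline{U^*}}$, which is proper.

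The main work is the properness modification. I fix a smooth proper function $\rho : M \rightarrow [0, \infty)$ (which exists on any paracompact Hilbert manifold), a unit vector $v \in \mH$ with $\langle z, v \rangle \leq 0$, and a smooth cutoff $\eta : M \rightarrow [0, 1]$ vanishing on an open neighborhood $N$ of $\overline{U^*} \cup \{\rho \leq 1/2\}$ and equal to $1$ outside a slightly larger neighborhood, and set
\[
\overline f(x) := \tilde f(x) + \eta(x)\bigl(\rho(x) + \|\tilde f(x)\|^2\bigr)\, v.
\]
On $N$ one has $\overline f = \tilde f$, so in particular $\overline f|_U = f|_U$; by shrinking $N$ if necessary, continuity of $\tilde f$ ensures $\tilde f$ avoids $B(z, \epsilon)$ on $N \cap (M \setminus U)$, using that $\tilde f|_{\overline{U^*} \setminus U} = f|_{\overline{U^*} \setminus U}$ avoids $B(z, 2\epsilon)$. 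The added perturbation has image in $\mR v$, hence its differential has rank at most one, so $\overline f$ is Fredholm of index $n$ and $d\overline f$ is homotopic to $A$ relative $U$ by a linear contraction of the perturbation. For the avoidance condition, note that $\eta(x) > 0$ forces $\rho(x) > 1/2$, and $\overline f(x) = z$ would, after taking the inner product with $v$ and using $\langle z, v\rangle \leq 0$ together with $|\langle \tilde f(x), v\rangle| \leq \|\tilde f(x)\|$, force $\|\tilde f(x)\|^2 - \|\tilde f(x)\| + \rho(x) \leq 0$, hence $\rho(x) \leq 1/4$, a contradiction; a uniform version of this estimate yields a small ball $B(z, \epsilon')$ disjoint from $\overline f(\{\eta > 0\} \cap (M \setminus U))$, which combined with the case of $N$ gives the required neighborhood $Z$. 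Properness follows from the bound $\|\overline f(x)\| \geq \rho(x) - 1/4$ on $\{\eta = 1\}$ combined with the properness of $f|_{\overline{U^*}}$ and continuity across the transition region.

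The main obstacle is this last step: crafting a single perturbation of $\tilde f$ that simultaneously preserves the Fredholm property and index, enforces global properness, and keeps the image outside $U$ away from a neighborhood of $z$. The quadratic term $\|\tilde f\|^2$ is essential in order to dominate $\|\tilde f\|$ at infinity (needed for properness), while the sign condition $\langle z, v\rangle \leq 0$ supplies the half-space argument for avoidance; reconciling these constraints consistently, especially in the transition region where $\eta$ interpolates, is the delicate technical point of the proof.
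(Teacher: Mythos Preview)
Your approach is genuinely different from the paper's, and the gap lies precisely where you flag it yourself: the transition region $\{0<\eta<1\}$. Both your properness argument and your avoidance argument silently assume $\eta=1$ when you pass from $\langle \overline f,v\rangle \leq C$ (respectively $\langle z,v\rangle\leq 0$) to the quadratic inequality $\|\tilde f\|^{2}-\|\tilde f\|+\rho\leq C'$. For $0<\eta<1$ you only get $\eta\bigl(\rho+\|\tilde f\|^{2}\bigr)\leq \|\tilde f\|+C$, and this does \emph{not} bound $\|\tilde f\|$ or $\rho$: take $\tilde f(x_n)=-t_n v$ with $t_n\to\infty$ and $\eta(x_n)\sim 1/t_n$, and $\overline f(x_n)$ stays bounded while $x_n$ escapes every compact set. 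Since the transition shell around $\overline{U^{*}}$ is non-compact in infinite dimensions and $\tilde f$ is only known to be proper on $\overline{U^{*}}$ itself (not on any larger shell, given your application of Corollary~\ref{fredextH} with $U^{*}$), ``continuity across the transition region'' carries no force. The same cancellation undermines the avoidance claim: there is nothing preventing $\tilde f(x)+\eta(x)(\rho+\|\tilde f\|^{2})v$ from hitting $z$ when $\eta(x)$ is small and $\tilde f(x)$ is large in the $-v$ direction.

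This cancellation phenomenon is exactly why the paper (and Elworthy--Tromba) do not use an additive rank-one perturbation. Instead they transport the problem to the unit sphere $S\subset\mH$ via Bessaga's diffeomorphism $\mH\cong S$; once the target is $S$, the map has $\|f\|\equiv 1$, so the \emph{multiplicative} rescaling $\lambda f$ with $\lambda\geq 1$ satisfies $\|\lambda f\|=\lambda$ and no cancellation is possible (Lemma~\ref{lem:propfredholm}). The remaining work --- pushing the image of $M\setminus U$ away from $z$ --- is done by composing with the involution $J$ of Corollary~\ref{cor:J} and a radial retraction onto $S\setminus\{z\}$. If you want to rescue an additive scheme you would need to rule out the scenario above, for instance by arranging the entire set $\{\eta<1\}$ to lie inside a region on which $\tilde f$ is already proper \emph{and} showing that bounded $\overline f$ forces bounded $\tilde f$ there; as written, neither is established.
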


In the next example we show that we cannot drop assumption (iii) in the above theorem.

\begin{example} 
\label{nopropext}
Let $\chi: \mR \rightarrow [0,1]$ be a smooth function such that $\chi(t)=1$ for $t\leq 1/3$ and $\chi(t)=0$ for $t\geq 2/3$. Define $\varphi: \mR^2 \rightarrow \mR$ as
\[
\varphi(s,t) = \chi(t) s + \bigl(1-\chi(t)\bigr) s^2.
\]
The smooth function $\varphi$ is not proper on $\mR \times [0,1]$ because for each $s\leq 0$ the interval $[s,s^2]$ contains $0$ and hence there is some $t\in [0,1]$ such that $\varphi(s,t)=0$. However, $\varphi$ is proper on the set
\[
\mR \times \bigl( [-1,1/3] \cup [2/3,2] \bigr)
\]
because of the identity
\[
\varphi(s,t) = \left\{ \begin{array}{ll} s & \mbox{on } \mR \times  [-1,1/3], \\ s^2 & \mbox{on } \mR \times  [2/3,2], \end{array} \right.
\]
together with the properness of
the functions $s\mapsto s$ and $s\mapsto s^2$ on $\mR$ and compactness of the intervals $[-1,1/3]$ and $[2/3,2]$.

Now we consider the smooth map
\[
f: \mR \times \mR \times \mH \rightarrow \mR\times \mH, \qquad (s,t,x) \mapsto \bigl( \varphi(s,t),x \bigr).
\]
This map is Fredholm of index 1. By the properties of $\varphi$, $f$ is not proper on $\mR \times [0,1] \times \mH$, but it is proper on the closure of the open set
\[
V := \mR \times \bigl[ (-1,1/3) \cup (2/3,2) \bigr] \times \mH.
\]
As $U$, $W$ and $M$ we choose the following open subsets of $\mR \times \mR \times \mH \cong \mH$
\[
U:= \mR \times \bigl[ (-1/2,1/4) \cup (3/4,3/2) \bigr] \times \mH, \qquad W = M := \mR \times \mR\times \mH,
\]
so that
\[
\overline{U} \subset V \subset \overline{V} \subset W.
\]
The map $f: W \rightarrow \mR \times \mH\cong \mH$ fulfills the assumptions (i) and (ii) of Theorem \ref{thm:totalextension}, but not (iii) because
\[
f(\partial U) = f \bigl( \mR \times \{-1/2,1/4, 3/4,3/2\} \times \mH) \supset f \bigl( \mR \times \{1/4\} \times \mH) = \mR \times \mH.
\]
We claim that there is no proper Fredholm map $\overline{f} : \mR \times \mR \times \mH \rightarrow \mR \times \mH$ which coincides with $f$ on $U$. Indeed, such a map would give a proper Fredholm homotopy between the index-$0$ Fredholm maps $f_0,f_1:\mR \times \mH \rightarrow \mR \times \mH$ defined by
\[
f_0(s,x)=(s,x), \qquad f_1(s,x)=(s^2,x).
\]
However, these maps are not proper Fredholm homotopic. Recall that the Caccioppoli-Smale degree $\deg_2$, defined as number of points in the preimage of a regular value modulo 2, is a proper Fredholm homotopy invariant, see \cite{cac36,sma65}. The Caccioppoli-Smale degree of $f_0=\mathrm{id}$ is 1, and the Caccioppoli-Smale degree of $f_1$ is 0, as this map is not surjective. Hence $f_0$ and $f_1$ are not proper Fredholm homotopic and there is no proper Fredholm extension $\overline f$. 
\end{example}

\section{Framed cobordism is an invariant of proper homotopy classes of proper Fredholm maps.}

In the following definition, we consider the empty set as a manifold of any dimension $n\in \mZ$.

\begin{definition} 
\label{defi:framing}
Let $n\in \mZ$ and let $X\subset M$ be a compact submanifold of dimension $n$. A \emph{framing} of $X$ in $M$ is a continuous map $A:M\rightarrow \Phi_n(\mH)$ which is smooth on $X$ and such that for every $x\in X$ the sequence
\begin{equation}
\label{eq:framing}
0\rightarrow T_xX\rightarrow \mH \xrightarrow{A(x)} \mH \rightarrow 0
\end{equation}
is exact. The pair $(X,A)$ is called a {\em framed submanifold} of $M$.
\end{definition}

The trivialization of the tangent bundle of $M$ enters in the first map $T_x X \rightarrow \mH$ in (\ref{eq:framing}), which is given by composing the inclusion of $T_x X$ into $T_x M$ with the isomorphism $T_x M \cong \mH$ given by the trivialization.

When we refer to a framed submanifold $(X,A)$ of $M$, we always assume $X$ to be compact. Notice that, as discussed in the Introduction, a framing of the submanifold $X$ is always defined on the whole  of $M$. Notice also that in the case $n<0$ the submanifold $X$ must be empty. A framing of the empty set is an arbitrary continuous map $A:M\rightarrow \Phi_n(\mH)$. 

\begin{remark}
\label{rem:det}
A framed submanifold of a simply connected Hilbert manifold $M$ is automatically orientable. Indeed, let $(X,A)$ be an $n$-dimensional framed submanifold of $M$. The space of Fredholm operators $\Phi(\mH)$ is the base space of a real line bundle
\[
\det \rightarrow \Phi(\mH),
\]
which is called the determinant bundle and whose fiber at $T\in \Phi(\mH)$ is the 1-dimensional space
\[
\det (T) := \Lambda^{\max}(\ker T) \otimes \Lambda^{\max}(\coker T)^*,
\]
where $\Lambda^{\max}(V)$ denotes the top degree component in the exterior algebra of the finite dimensional real vector space $V$. This bundle was introduced by Quillen in \cite{qui85}; see also \cite{wan05,am09} for some of its properties. The determinant bundle pulls back by $A$ to a line bundle $A^*(\det)$ over $M$, which is trivial because $M$ is simply connected. Let $x\in X$. Since $A(x)$ is surjective and has kernel $T_x X$, the fiber of this line bundle at $x$ is 
\[
 \Lambda^n(\ker A(x))\otimes \Lambda^0(\coker A)^*=\Lambda^n(T_x X)\otimes \mR^*\cong  \Lambda^n(T_x X).
\]
Therefore, a global trivialization of $A^*(\det)$ induces by restriction a global trivialization of $\Lambda^n(TX)$ and hence an orientation of $X$. 
\end{remark}

\begin{remark}
\label{stablerem}
Actually, the tangent bundle $TX$ of an $n$-dimensional framed submanifold $(X,A)$ of $M$ is stably trivial if $\pi_k(M)=0$ for all $k\leq n$. Indeed, the Atiyah-J\"anich Theorem \cite[Appendix]{ati89} states that for a compact Hausdorff space $X$ the homotopy classes of maps from $X$ to $\Phi(\mH)$ are in one-to-one correspondence with the $K$-theory classes of real vector bundles of $X$. The isomorphism maps the homotopy class of the map $A\bigr|_X$, seen as a map to all Fredholm operators $\Phi(\mH)$, to the $K$-theory class of the tangent bundle $TX$. The map $A\bigr|_X$ factors as $X\rightarrow M\rightarrow \Phi(\mH)$. Since $\pi_k(M)$ vanishes for all $k\leq\dim X$, there is only one homotopy class of maps $X\rightarrow M$, see \cite[Corollary VII.13.16]{bre97}, so $A\bigr|_X$ is homotopic to a constant map. This in turn implies that the $K$-theory class $[TX]$ is equal to the $K$-theory class of the trivial $n$-dimensional vector bundle over $X$, which precisely means that $TX$ is stably trivial. 
\end{remark}

Let $f: M \rightarrow \mH$ be a proper Fredholm map of index $n$. Let $y\in \mH$ be a regular value of $f$. Then $f^{-1}(y)$ is a compact submanifold of $M$ of dimension $n$. By the regularity of $y$, the map $df:M \rightarrow \Phi_n(\mH)$ is a framing of $f^{-1}(y)$.

\begin{definition}
Let $y\in \mH$ be a regular value of the proper Fredholm map $f:M \rightarrow \mH$. The framed submanifold $(f^{-1}(y),df)$ is called the \emph{Pontryagin manifold} of $f$ at $y$.
\end{definition}

Notice that the Pontryagin manifold of $f$ at $y$ depends on the choice of the trivialization of $TM$. After a change of the trivialization of $TM$, the framing $df: M \rightarrow \Phi(\mH)$ gets multiplied on the right by a smooth map $G:M \rightarrow GL(\mH)$. 

\begin{remark}
Note that if $f^{-1}(y)$ intersects each component of $M$ then the framing determines the map $f$ completely. Indeed, we have
\[
f(x)=\int_{\gamma} df-y, 
\]
where $\gamma$ is a path from a point in $f^{-1}(y)$ to $x$. It turns out that for a framed submanifold $(X,A)$, $A$ is always homotopic relative $X$ to a Pontryagin manifold of some map. We could have defined the framing to be a homotopy class of framings as we define, but this is cumbersome to work with.
\end{remark}

\begin{definition}
A \emph{cobordism} between compact $n$-dimensional submanifolds $X_0,X_1\subset M$ is a compact submanifold with boundary $W\subset [0,1]\times M$ of dimension $n+1$ such that
\[
\partial W\subset \{0,1\} \times M,\quad \bigl( [0,\epsilon)  \times M\bigr)\cap W = [0,\epsilon) \times X_0, \quad \bigl( (1-\epsilon,1] \times M \bigr)\cap W = (1-\epsilon,1] \times X_1,
\]
for some $\epsilon>0$. A \emph{framing} of the cobordism $W$ is a continuous map  $B:[0,1]\times M\rightarrow \Phi_{n+1}(\mR\times \mH, \mH)$ which is smooth on $W$ and such that for every $(t,x)\in W$ the sequence
\[
0\rightarrow T_{(t,x)} W \rightarrow \mR\times \mH \xrightarrow{B(t,x)} \mH \rightarrow 0
\]
is exact, and for every $t_0\in[0,\epsilon)$, $t_1\in (1-\epsilon,1]$, $x\in M$,
\[
B(t_0,x)(s,u) = A_0(x)u, \qquad B(t_1,x)(s,u) = A_1(x)u \qquad \forall (s,u)\in \mR \times \mH,
\]
where $A_0,A_1: M \rightarrow \Phi_n(\mH)$ are framings of $X_0$ and $X_1$, respectively. In this case,
the pair $(W,B)$ is said to be a \emph{framed cobordism} between the framed submanifolds $(X_0,A_0)$ and $(X_1,A_1)$.
\end{definition}
In particular, the map
\[
[0,1]\times M \rightarrow \Phi_n(\{0\} \times \mH,\mH) \cong \Phi_n(\mH), \qquad (t,x) \mapsto B(t,x)|_{\{0\} \times \mH},
\]
defines a homotopy between $A_0$ and $A_1$.

The above definition is modeled after the following example: let $F: [0,1] \times M \rightarrow \mH$ be a smooth proper Fredholm homotopy between proper Fredholm maps $f,g: M \rightarrow \mH$ such that $F(t,x)=f(x)$ for $t\in[0,\epsilon)$ and $F(t,x)=g(x)$ for $t\in(1-\epsilon,1]$. Let $y\in \mH$ be a regular value for $F$; then it follows that $y$ is a regular value of $f$ and $g$.  The pair $(F^{-1}(y),dF)$ is a framed cobordism between the Pontryagin manifolds $(f^{-1}(y),df)$ and  $(g^{-1}(y),dg)$. 

The proof of the following fact is straightforward.

\begin{proposition}
Framed cobordism induces an equivalence relation on the set of framed submanifolds of $M$.
\end{proposition}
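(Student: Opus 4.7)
The plan is to check the three defining properties of an equivalence relation directly from the definition. For reflexivity of a framed submanifold $(X, A)$, take $W := [0,1] \times X \subset [0,1] \times M$ and define $B(t, x)(s, u) := A(x) u$. Then $W$ is a compact $(n+1)$-dimensional submanifold with boundary trivially satisfying the collar condition at $\{0,1\} \times M$, the map $B$ is continuous on $[0,1] \times M$ and smooth on $W$, $\ker B(t,x) = \mR \times T_x X = T_{(t,x)} W$ for $(t,x)\in W$, and $B(t,x)$ is surjective because $A(x)$ is. So $(W, B)$ is a framed cobordism from $(X, A)$ to itself.

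For symmetry, let $(W, B)$ be a framed cobordism from $(X_0, A_0)$ to $(X_1, A_1)$. Let $\sigma(t, x) := (1 - t, x)$ and $S(s, u) := (-s, u)$, and set $W' := \sigma(W)$ and $B'(t, x) := B(\sigma(t, x)) \circ S$. Since $\sigma$ is a diffeomorphism of $[0,1] \times M$ with $d\sigma|_{(t,x)}=S$, one has $T_{(t,x)} W' = S(T_{\sigma(t,x)} W)$, hence $\ker B'(t,x) = S^{-1}(\ker B(\sigma(t,x))) = T_{(t,x)} W'$. In the collar $[0,\epsilon) \times M$, $B'(t,x)(s,u) = B(1-t, x)(-s, u) = A_1(x) u$ because $1-t$ lies in the collar of $\{1\} \times X_1$ for $B$; symmetrically, $B'$ reduces to $A_0(x) u$ near $t = 1$. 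Thus $(W', B')$ is a framed cobordism from $(X_1, A_1)$ to $(X_0, A_0)$.

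The main work is transitivity. Given framed cobordisms $(W_{01}, B_{01})$ from $(X_0, A_0)$ to $(X_1, A_1)$ and $(W_{12}, B_{12})$ from $(X_1, A_1)$ to $(X_2, A_2)$, both with collar parameter $\epsilon$, I concatenate after rescaling time. Define $\phi(t, x) := (2t, x)$ on $[0, 1/2] \times M$ and $\psi(t, x) := (2t - 1, x)$ on $[1/2, 1] \times M$, and put
\[
W_{02} := \phi^{-1}(W_{01}) \cup \psi^{-1}(W_{12}),
\]
\[
B_{02}(t, x)(s, u) := \begin{cases} B_{01}(2t, x)(2s, u) & t \in [0, 1/2], \\ B_{12}(2t - 1, x)(2s, u) & t \in [1/2, 1]. \end{cases}
\]
The delicate step is the gluing at $t = 1/2$. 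The collar conditions on $W_{01}$ and $W_{12}$ imply that on the open slab $(1/2 - \epsilon/2, 1/2 + \epsilon/2) \times M$ both $\phi^{-1}(W_{01})$ and $\psi^{-1}(W_{12})$ coincide with the product $(1/2 - \epsilon/2, 1/2 + \epsilon/2) \times X_1$, so $W_{02}$ is a smooth compact $(n+1)$-dimensional submanifold with boundary $\{0\}\times X_0 \cup \{1\}\times X_2$, inheriting collars at $t=0$ and $t=1$ from $W_{01}$ and $W_{12}$. On the same slab both defining expressions for $B_{02}$ reduce to $(s, u) \mapsto A_1(x) u$, since $B_{01}$ and $B_{12}$ are $s$-independent in their collars, so $B_{02}$ is continuous on $[0,1] \times M$ and smooth on $W_{02}$. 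On each half the precomposition with the isomorphism $(s, u) \mapsto (2s, u)$ preserves the Fredholm index $n+1$ and identifies the kernel with the time-rescaled tangent space, so the kernel condition on $W_{02}$ and the boundary framing conditions hold. Hence $(W_{02}, B_{02})$ is a framed cobordism from $(X_0, A_0)$ to $(X_2, A_2)$.
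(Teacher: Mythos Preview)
Your proof is correct; the paper itself does not give a proof, merely stating that the fact is straightforward, and your direct verification of reflexivity, symmetry, and transitivity is exactly the routine check one would write out. The only cosmetic point is that in transitivity you tacitly assume both cobordisms share the same collar parameter $\epsilon$, which is harmless since one may always pass to the minimum.
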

 
In the remaining part of this section we want to show that the framed cobordism class of the Pontryagin manifold of $f$ is an invariant of the proper Fredholm homotopy class of $f$. The argument is the same as in finite dimensional Pontryagin theory.

\begin{lemma}
\label{lem:indeppert} 
Let $f: M \rightarrow \mH$ be a proper Fredholm map and let $U\subset \mH$ be a convex open set consisting of regular values of $f$. Then $(f^{-1}(y),df)$ is framed cobordant to $(f^{-1}(z),df)$ for each $y,z\in U$.
\end{lemma}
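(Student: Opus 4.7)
The plan is to construct an explicit framed cobordism by sweeping the value from $y$ to $z$ along a path in $U$. Using convexity of $U$, I choose a smooth path $\gamma:[0,1]\to U$ with $\gamma(0)=y$, $\gamma(1)=z$, and $\gamma$ constant near both endpoints --- e.g.\ $\gamma(t) := y + \phi(t)(z-y)$ for a smooth $\phi:[0,1]\to[0,1]$ equal to $0$ near $0$ and to $1$ near $1$. Every value $\gamma(t)$ is a regular value of $f$ since $\gamma(t)\in U$.

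The candidate cobordism is $W := \Psi^{-1}(0)$, where $\Psi : [0,1]\times M \to \mH$ is the smooth map $\Psi(t,x) := f(x) - \gamma(t)$. Its differential $d\Psi(t,x)(s,v) = df(x)v - s\gamma'(t)$ is surjective at every point of $W$ because $df(x)$ already is, so the regular value theorem together with a kernel dimension count shows that $W$ is a smooth $(n+1)$-dimensional submanifold of $[0,1]\times M$. Compactness of $W$ follows from properness of $f$ applied to the compact set $\gamma([0,1])$. The collar conditions on $W$ near $t=0$ and $t=1$ hold automatically because $\gamma$ is constant there: $W\cap ([0,\epsilon)\times M) = [0,\epsilon)\times f^{-1}(y)$ and analogously at $t=1$.

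For the framing I take $B := d\Psi$, that is,
\[
B(t,x)(s,u) := df(x)u - s\gamma'(t), \qquad (t,x)\in [0,1]\times M,\ (s,u)\in \mR\times \mH.
\]
Then $B$ is smooth everywhere on $[0,1]\times M$, has $\ker B(t,x) = T_{(t,x)}W$ at each point of $W$ by construction, and takes values in $\Phi_{n+1}(\mR\times\mH,\mH)$ since the operator $(s,u)\mapsto df(x)u$ is Fredholm of index $n+1$ and the rank-one term $(s,u)\mapsto -s\gamma'(t)$ preserves the index. Where $\gamma'$ vanishes --- that is, in a neighborhood of $t=0$ and of $t=1$ --- the formula collapses to $B(t,x)(s,u) = df(x)u$, which is exactly the required boundary matching with $A_0 = A_1 = df$. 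This exhibits $(W,B)$ as a framed cobordism between $(f^{-1}(y),df)$ and $(f^{-1}(z),df)$. No substantive obstacle is expected; the only point needing a bit of care is to force $\gamma$ (and hence $\gamma'$) to be \emph{strictly} constant in neighborhoods of the endpoints, so that both the collar structure of $W$ and the boundary condition on $B$ hold on the nose rather than merely up to reparametrization.
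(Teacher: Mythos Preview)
Your proof is correct and is essentially identical to the paper's: the paper defines $F(t,x)=f(x)-\omega(t)(z-y)$ with $\omega$ constant near the endpoints and takes the framed cobordism $(F^{-1}(y),dF)$, which is exactly your $(W,B)=(\Psi^{-1}(0),d\Psi)$ up to the harmless additive constant $y$. The only cosmetic difference is that the paper phrases the argument as ``$y$ is a regular value of the proper Fredholm homotopy $F$'' rather than directly verifying the cobordism axioms for $W$ and $B$ as you do.
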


\begin{proof} 
Let $\omega:[0,1]\rightarrow [0,1]$ be a smooth function such that $\omega=0$ on $[0,1/3]$ and $\omega=1$ on $[2/3,1]$. Let $F:[0,1]\times M \rightarrow \mH$ be the smooth homotopy 
\[
F(t,x):=f(x)-\omega(t)(z-y).
\]
The formula
\[
dF(t,x)(s,u) = df(x)u - s \, \omega'(t) (z-y), \qquad \forall (t,x)\in [0,1]\times M, \; \forall (s,u)\in \mR \times \mH,
\]
shows that $F$ is a Fredholm map of index equal to the index of the operator 
\[
\mR \times \mH \rightarrow \mH, \qquad 
(s,u) \rightarrow df(x)u,
\]
as $dF$ is a finite rank perturbation of the operator map above. Thus the index of $F$ is $\ind f+1$. Moreover, if $K\subset \mH$ is compact then the closed set $F^{-1}(K)$ is also compact, being contained in the compact set
\[
f^{-1}\bigl(K+[0,1](z-y) \bigr) \times [0,1].
\] 
This shows that $F$ is proper. The fact that the segment between $y$ and $z$ consists of regular values of $f$ implies that $y$ is a regular value of $F(t,\cdot)$ for each $t\in [0,1]$. In particular, $y$ is a regular value of $F$ and of $g:=F(1,\cdot) = f - z + y$. Therefore, $(F^{-1}(y),dF)$ is the required framed cobordism between $(f^{-1}(y),df)$ and $(g^{-1}(y),dg)= (f^{-1}(z),df)$.
\end{proof}

\begin{proposition} 
\label{prop:homotopycobordant}
Suppose that $f,g:M\rightarrow \mH$ are proper Fredholm maps which are proper Fredholm homotopic.  Let $y$ be a regular value of $f$ and $z$ a regular value of $g$. Then the Pontryagin manifold of $f$ at $y$ is framed cobordant to the Pontryagin manifold of $g$ at $z$.
\end{proposition}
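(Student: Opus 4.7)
My plan is to concatenate three framed cobordisms: two coming from Lemma \ref{lem:indeppert}, which lets one move a Pontryagin manifold among nearby regular values of a fixed proper Fredholm map, and a central one coming from the preimage of a common regular value of a suitable proper Fredholm homotopy between $f$ and a translate of $g$. The reason for working with a translate is that the natural middle cobordism requires a single point to be a common regular value of both endpoints of the homotopy, whereas in general $y$ and $z$ are unrelated.

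As a preliminary reduction, I would replace $g$ by $\tilde g := g + y - z$, so that $d\tilde g = dg$ and $(\tilde g^{-1}(y), d\tilde g) = (g^{-1}(z), dg)$. The affine homotopy $(t,x) \mapsto g(x) + t(y-z)$ is a smooth proper Fredholm homotopy from $g$ to $\tilde g$; concatenating it with the given proper Fredholm homotopy between $f$ and $g$, followed by the standard $t$-reparametrization with a smooth cutoff explained in the Setup, yields a smooth proper Fredholm homotopy $H : [0,1] \times M \to \mH$ of index $n+1$ with $H(t,x) = f(x)$ on $[0,\epsilon)\times M$ and $H(t,x) = \tilde g(x)$ on $(1-\epsilon, 1]\times M$.

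Next I would choose a suitable common regular value. By the corollary to Smale's theorem, the sets of regular values of the proper Fredholm maps $f$, $\tilde g$, and $H$ are all open and dense in $\mH$. I would pick a convex open neighborhood $U$ of $y$ lying inside the regular values of both $f$ and $\tilde g$ and then, using density of regular values of $H$, select $y' \in U$ which is also regular for $H$. The product structure of $H$ near $t=0,1$ gives $dH(0,x)(s,u) = df(x)u$ and $dH(1,x)(s,u) = d\tilde g(x)u$, so $y' \in U$ is automatically regular for $f$ and $\tilde g$. Lemma \ref{lem:indeppert} applied inside $U$ then produces the two outer framed cobordisms
\[
(f^{-1}(y), df) \sim (f^{-1}(y'), df), \qquad (\tilde g^{-1}(y'), d\tilde g) \sim (\tilde g^{-1}(y), d\tilde g).
\]

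For the central cobordism I would take $W := H^{-1}(y')$. Regularity of $y'$ for $H$ makes $W$ a compact $(n+1)$-dimensional submanifold of $[0,1]\times M$ with boundary $\{0\}\times f^{-1}(y') \sqcup \{1\}\times \tilde g^{-1}(y')$, and the product structure of $H$ at the time endpoints gives the required collar form for $W$. At each $(t,x) \in W$, the differential $dH(t,x)\in \Phi_{n+1}(\mR \times \mH, \mH)$ is surjective with kernel $T_{(t,x)} W$, and on the collars it reduces to $(s,u) \mapsto df(x)u$ and $(s,u) \mapsto d\tilde g(x)u$ respectively, so $(W, dH)$ is a framed cobordism between $(f^{-1}(y'), df)$ and $(\tilde g^{-1}(y'), d\tilde g)$. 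Concatenating the three cobordisms and recalling $(\tilde g^{-1}(y), d\tilde g) = (g^{-1}(z), dg)$ yields the sought framed cobordism between the Pontryagin manifolds of $f$ at $y$ and of $g$ at $z$. The point needing the most care is the simultaneous choice of $y'$ regular for $f$, $\tilde g$, and $H$ while lying in a convex set of regular values of $f$ and $\tilde g$; this is handled by combining openness of regular values for proper Fredholm maps with density of regular values of $H$ in a single elementary Baire-style argument.
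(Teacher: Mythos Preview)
Your argument is correct and follows essentially the same strategy as the paper's proof: translate so that the two regular values coincide, then use density of regular values of the homotopy to find a common nearby regular value $y'$, and apply Lemma~\ref{lem:indeppert} on each end. The only cosmetic difference is that the paper builds the shifted homotopy in one step as $G(t,x)=F(t,x)-\omega(t)(z-y)$, whereas you concatenate the given homotopy with the affine translation from $g$ to $\tilde g$; both yield a proper Fredholm homotopy from $f$ to $g+y-z$ and the rest of the argument is identical.
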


\begin{proof} 
Since the set of regular values of proper Fredholm maps is open, we can find a convex open neighborhood $U$ of $y$ consisting of regular values of $f$. Similarly, let $V$ be a convex open neighborhood of $z$ consisting of regular values of $g$. Let $F:[0,1]\times M \rightarrow \mH$ be a proper Fredholm homotopy between $f$ and $g$. Consider the smooth map $G:[0,1]\times M\rightarrow \mH$ given by
\[
G(t,x):=F(t,x)-\omega(t)(z-y),
\] 
where $\omega:[0,1] \rightarrow [0,1]$ is a smooth function such that $\omega(t)=0$ for $t\in [0,1/3]$ and $\omega(t)=1$ for $t\in[2/3,1]$. Arguing as in the proof of Lemma \ref{lem:indeppert}, we see that $G$ is a proper Fredholm homotopy. Since the set of regular values of Fredholm maps is dense, we can find a point $y'\in U$ which is a regular value of $G$ and which is so close to $y$ that $z+y'-y$ belongs to $V$. By Lemma \ref{lem:indeppert}, $(f^{-1}(y),df)$ and $(f^{-1}(y'),df)$ are framed cobordant, and so are $(g^{-1}(z),dg)$ and $(g^{-1}(z+y'-y),dg)$. Since $G(0,\cdot) = F(0,\cdot) = f$ and $G(1,\cdot) = F(1,\cdot) - z + y = g - z + y$, $(G^{-1}(y'),dG)$ is a framed cobordism between $(f^{-1}(y'),df)$ and $(g^{-1}(z+y'-y),dg)$. The conclusion follows from the fact that framed cobordism is an equivalence relation. 
\end{proof}

\begin{remark}
As already observed, framed submanifolds of $M$ are defined in terms of a given trivialization $\{\tau_x : T_x M \rightarrow \mH\}_{x\in M}$ of the tangent bundle of $M$. Let  $\{\tilde{\tau}_x : T_x M \rightarrow \mH\}_{x\in M}$ be another trivialization of $TM$ and let 
\[
G : M \rightarrow GL(\mH), \qquad G(x):= \tau_x \circ \tilde{\tau}_x^{-1},
\]
be the corresponding transition map. Then the map $(X,A)\mapsto (X,AG)$ defines a bijection from the set of framed submanifolds of  $M$ with respect to the trivialization $\tau$ to the set of framed submanifolds of  $M$ with respect to the trivialization $\tilde{\tau}$. It is easy to see that this map preserves the equivalence relation given by framed cobordism and that it maps the Pontryagin manifold of $f$ at $y$ relative to the trivialization $\tau$ to the Pontryagin manifold of $f$ at $y$ relative to the trivialization $\tilde{\tau}$.
\end{remark}

\begin{remark}
The results in this section are also true if the codomain of $f$ is a general connected Hilbert manifold $N$ instead of $\mH$. 
\end{remark}

\begin{remark}
In \cite{sma65}, Smale introduced an invariant for proper homotopy classes of Fredholm maps of index $n\geq 0$: it is the unoriented bordism class of the inverse image of a regular value of the proper Fredholm map $f:M\rightarrow N$. The framed cobordism class of the Pontryagin manifold introduced here can be seen as a refinement of this invariant (in the spacial case $N=\mH$). The Smale invariant is quite poor when the topology of the domain $M$ is too simple. Indeed, assume that $n\geq 1$ and $\pi_k(M)=0$ for all $k\leq n$. Let $f: M \rightarrow N$ be a proper Fredholm map of index $n$ and let $y\in N$ be a regular value of $f$. 
By the arguments of Remark \ref{stablerem}, the tangent bundle of $f^{-1}(y)$ is stably trivial, and hence all the Stiefel-Whitney numbers of $f^{-1}(y)$ vanish. By a theorem of Thom (see for example \cite[Chapter VI]{sto68}), the Stiefel-Whitney numbers are a complete invariant for unoriented bordism and it follows that $f^{-1}(y)$ is nullbordant, so Smale's invariant vanishes in this case. Every unoriented bordism class does occur as Smale's invariant of some proper Fredholm map into $\mH$. To see this, let $X$ be a closed finite dimensional manifold and consider the projection $p:X\times \mH\rightarrow \mH$ to the second factor. The map $p$ is proper Fredholm of index $\dim X$ where every value is regular with inverse image $X$.
\end{remark}

\section{Every compact framed submanifold occurs as a Pontryagin manifold of some proper Fredholm map}

The results of this section use in a fundamental way the fact that we are working with $\mH$-valued maps. 

\begin{proposition}
\label{prop:submanifoldmap}
Let $n\in \mZ$ and let $(X,A)$ be an $n$-dimensional framed submanifold of $M$. Then there exists a proper Fredholm map $f:M\rightarrow \mH$ of index $n$ with $f^{-1}(0)=X$, $df|_X=A|_X$, and such that the maps $df:M \rightarrow \Phi_n(\mH)$ and $A: M \rightarrow \Phi_n(\mH)$ are homotopic relative $X$.
\end{proposition}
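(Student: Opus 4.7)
The strategy is to build $f$ as a local model on a tubular neighborhood of $X$ and invoke Theorem~\ref{thm:totalextension} to extend it properly. The case $X=\emptyset$ (which includes all $n<0$) follows at once from the extension theorem with $U=V=W=\emptyset$ and $z=0$. Assume $X\neq\emptyset$ henceforth.

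\emph{Local model.} Under the trivialization of $TM$, each $T_xX\subset\mH$ equals $\ker A(x)$; let $N_xX:=T_xX^\perp$, so that $A(x)|_{N_xX}:N_xX\rightarrow\mH$ is a Banach space isomorphism depending continuously on $x\in X$. Pick a smooth tubular neighborhood $\psi:\mathcal U\rightarrow W$ of $X$ in $M$, with $\mathcal U$ an open neighborhood of the zero section of the normal bundle $NX$, normalized so that $d\psi(x,0)=\id$ under the canonical identifications $T_{(x,0)}NX=T_xX\oplus N_xX=T_xM$. Define
\[
\tilde f:\mathcal U\rightarrow\mH,\qquad \tilde f(x,v):=A(x)v,\qquad f_0:=\tilde f\circ\psi^{-1}:W\rightarrow\mH.
\]
A direct computation gives $df_0|_X=A|_X$ and $f_0^{-1}(0)\cap W=X$. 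Choose $\delta>0$ so small that the closed $2\delta$-disk bundle $D_{2\delta}\subset NX$ lies in $\mathcal U$ and so that on $W_1:=\psi(\{\|v\|<2\delta\})$ the convex combinations $tdf_0+(1-t)A$ all lie in the open set $\Phi_n(\mH)$ (possible by continuity, since $df_0=A$ on $X$). Set $\overline V:=\psi(D_\delta)\subset W_1$. For any compact $K\subset\mH$, the set $\tilde f^{-1}(K)\cap D_\delta$ is a closed subset of the image of the compact set $X\times K$ under the continuous map $(x,y)\mapsto(x,(A(x)|_{N_xX})^{-1}y)$, hence compact; so $f_0|_{\overline V}$ is proper.

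\emph{Matching the differential and extending.} Choose a smooth cutoff $\phi:M\rightarrow[0,1]$ equal to $1$ on an open neighborhood $W_0$ of $\overline V$ with $\overline{W_0}\subset W_1$, and vanishing outside $W_1$. Define
\[
A':=\phi\,df_0+(1-\phi)\,A \text{ on } W_1, \qquad A':=A \text{ on } M\setminus W_1.
\]
By the choice of $\delta$, $A'$ takes values in $\Phi_n(\mH)$; it is continuous on $M$, equals $df_0$ on $W_0$, and is homotopic to $A$ relative $X$ via the straight line $sA'+(1-s)A$, which equals $A|_X$ on $X$ for every $s$. Pick open sets $U\subset V$ with $X\subset U$, $\overline U\subset V$, $\overline V\subset W_0$. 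Since $f_0^{-1}(0)\cap W_0=X\subset U$ and $\partial U\cap U=\emptyset$, we have $0\notin f_0(\partial U)$. Applying Theorem~\ref{thm:totalextension} with data $f_0|_{W_0}$, $A'$, and $z=0$ produces a proper Fredholm map $f:M\rightarrow\mH$ of index $n$ with $f|_U=f_0|_U$, with $df$ homotopic to $A'$ relative $U$, and with $f^{-1}(Z)=f_0^{-1}(Z)\cap U$ for some neighborhood $Z$ of $0$. These imply $f^{-1}(0)=X$, $df|_X=A|_X$, and $df\simeq A'\simeq A$ relative $X$.

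The main obstacle is the properness of the local model $f_0$ on the closed tubular neighborhood $\overline V$: since the fibers of $NX$ are infinite-dimensional, $\overline V$ is not compact, so the usual finite-dimensional Pontryagin argument does not apply directly. What rescues the argument is the fiberwise invertibility of the framing on $N_xX$, which parametrizes $f_0^{-1}(K)\cap\overline V$ as a closed subset of the image of the compact space $X\times K$, and thus yields compactness.
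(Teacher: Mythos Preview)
Your argument is correct and follows essentially the same route as the paper: build the local model on a tubular neighborhood of $X$ by $(x,v)\mapsto A(x)v$, interpolate the differential with $A$ via a cutoff to get a global $\Phi_n(\mH)$-valued map agreeing with $df_0$ near $X$ and homotopic to $A$ relative $X$, then invoke Theorem~\ref{thm:totalextension} with $z=0$. The only cosmetic differences are that the paper packages the local map as the composition $\pi_2\circ\widehat{A}\circ\phi$ and deduces properness from the properness of $\pi_2:X\times\mH\to\mH$ (which is your argument in disguise), and that you reuse the letters $V$ and $W$ in two roles, which is harmless but worth cleaning up.
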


\begin{proof}
When $X$ is empty, for instance because $n<0$, this result reduces to the following statement: for every continuous map $A: M \rightarrow \Phi_n(\mH)$ there exists a proper Fredholm map $f: M \rightarrow \mH$ of index $n$ such that $f^{-1}(0)=\emptyset$ and $df$ is homotopic to $A$. This statement follows immediately from Theorem \ref{thm:totalextension} applied with $W=\emptyset$.

We now consider the case $X\neq \emptyset$. Denote by $NX$ the normal bundle of $X$ in $M$ and by $D$ the unit disc bundle $D=\{(x,v)\in NX\,|\,\norm x\leq 1\}$. Choose a tubular neighborhood $N$ of $X$ in $M$, which comes along with a diffeomorphism $\phi:\overline N\rightarrow D$ such that $\phi(x)=(x,0)$  and $d\phi(x)$ is the identity for every $x\in X$. The restriction of $A$ to $X$ induces the following isomorphism of vector bundles over $X$ 
\[
\widehat{A} : NX\rightarrow X\times \mH, \qquad \widehat{A}(x,v)=(x,A(x)v).
\]
We define $g:\overline{N}\rightarrow \mH$ as the composition
\[
 \overline{N}\xrightarrow{\phi} NX\xrightarrow{\widehat{A}}X\times \mH\xrightarrow{\pi_2} \mH.
\]
The projection onto the second factor $\pi_2$ is proper, because $X$ is compact, and since $\phi$ and $\widehat{A}$ are homeomorphisms onto their closed image we deduce that $g$ is proper. Moreover, $\pi_2$ is a Fredholm map of index $n=\dim X$ and so is $g$, because $\phi$ and $\widehat{A}$ are diffeomorphisms. By construction,
\[
g^{-1}(0) = X \qquad \mbox{and} \qquad dg(x) = A(x) \quad \forall x\in X.
\]
We claim that there is a continuous map $B: M \rightarrow \Phi_n(X)$ which coincides with $dg$ in a neighborhood of $X$ and is homotopic to $A$ relative $X$. Let us prove this claim. Since $A(x)=dg(x)$ for every $x\in X$, we can find an open neighborhood $W$ of $X$ such that $\overline{W}\subset N$ and for any $x\in W$ each convex combination between $A(x)$ and $dg(x)$ is Fredholm of index $n$. Let $\chi: M \rightarrow [0,1]$ be a smooth function which is supported in $W$ and takes the value 1 on a neighborhood of $X$. Then the map
\[
H :[0,1] \times M \rightarrow \Phi_n(\mH), \qquad 
H(t,x) := \left\{ \begin{array}{ll} t \chi(x) \, dg(x) + (1-t\chi(x)) A(x) & \mbox{for } x\in N, \\ A(x) & \mbox{for } x\in M \setminus N, \end{array} \right.
\]
is a homotopy relative $X$ between $A$ and a map $B:= H(1,\cdot)$ which coincides with $dg$ on a neighborhood of $X$. This concludes the proof of the claim.

Now fix a neighborhood $U$ of $X$ with $\overline U \subset W\subset N$, and notice that $0$ belongs to $g(U) \setminus g(\partial U)$, because $g^{-1}(0)=X$. By Theorem \ref{thm:totalextension} we can find a proper Fredholm map $f: M \rightarrow \mH$ such that $f|_U = g|_U$ and
\begin{enumerate}[(a)]
\item $df$ is homotopic to $B$ relative $U$;
\item there exists a neighborhood $Z\subset \mH$ of $0$ such that $f^{-1}(Z)=f^{-1}(Z)\cap U$. 
\end{enumerate}
By property (b) we have
\[
f^{-1}(0) = f^{-1}(0) \cap U = g^{-1}(0) \cap U = X.
\]
Moreover, $df|_X = dg|_X = A|_X$ and, by (a) and the properties of $B$, the maps $df$ and $A$ are homotopic relative $X$.
\end{proof}

\begin{remark}
\label{persu}
Let $(X,A)$ and $f:M \rightarrow \mH$ be as in Proposition \ref{prop:submanifoldmap}.
In particular, the Pontryagin manifold of $f$ at $0$ is framed cobordant to $(X,A)$ through the trivial framed cobordism $(W,B)$ which is defined by
\[
\begin{split}
W &:= [0,1]\times X = [0,1]\times f^{-1}(0), \\ B(t,x)(s,u) &:= H(t,x)u \quad \forall (t,x)\in [0,1]\times M, \; \forall (s,u)\in \mR \times \mH,
\end{split}
\]
where $H:[0,1]\times M \rightarrow \Phi_n(\mH)$ is a smooth homotopy relative $X$ between $df$ and $A$ which is independent of $t$ in a neighborhood of $t=0$ and in a neighborhood of $t=1$.
\end{remark}

Here is the cobordism version of the above result. The proof is similar. 

\begin{proposition}
\label{new}
Let $W\subset [0,1]\times M$ be an $(n+1)$-dimensional cobordism and let $B:[0,1]\times M \rightarrow \Phi_{n+1}(\mR\times \mH,\mH)$ be a framing of $W$. Then there exists a proper Fredholm map $h: [0,1]\times M \rightarrow \mH$ with $h^{-1}(0)=W$, $dh|_W=B|_W$, and such that the maps $dh$ and $B$ are homotopic relative $W$.
\end{proposition}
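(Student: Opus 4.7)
The plan is to mimic the proof of Proposition \ref{prop:submanifoldmap} almost verbatim, with $[0,1]\times M$ (viewed as a Hilbert manifold with boundary) in place of $M$, and with the framed cobordism $(W,B)$ in place of the framed submanifold $(X,A)$. Theorem \ref{thm:totalextension} is already stated for manifolds possibly with boundary, so its use carries over. The case $W=\emptyset$ (which includes $n+1<0$) is immediate: applying Theorem \ref{thm:totalextension} with $U=V=\emptyset$ in its notation to the map $B$ directly produces a proper Fredholm map $h:[0,1]\times M\rightarrow\mH$ of index $n+1$ with $h^{-1}(0)=\emptyset$ and $dh$ homotopic to $B$. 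So assume $W\neq\emptyset$.

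The construction then proceeds as follows. First I would choose a tubular neighborhood $\mathcal{N}$ of $W$ in $[0,1]\times M$, together with a diffeomorphism $\phi:\overline{\mathcal{N}}\rightarrow D\subset NW$ onto the closed unit disc bundle of the normal bundle $NW$ of $W$ in $[0,1]\times M$, such that $\phi(w)=(w,0)$ and $d\phi(w)=\id$ on $T_w W\oplus N_w W$ for every $w\in W$. The framing $B|_W$ is a smooth family of surjective Fredholm operators $B(w):\mR\times\mH\rightarrow\mH$ with $\ker B(w)=T_w W$, so it descends to a bundle isomorphism
\[
\widehat{B}:NW\longrightarrow W\times\mH.
\]
Define $g:\overline{\mathcal{N}}\rightarrow\mH$ as the composition
\[
\overline{\mathcal{N}}\xrightarrow{\phi}NW\xrightarrow{\widehat{B}}W\times\mH\xrightarrow{\pi_2}\mH.
\]
Since $W$ is compact, $\pi_2$ is proper; as $\phi$ and $\widehat{B}$ are diffeomorphisms onto their closed images, $g$ is proper. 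Moreover $g$ is Fredholm of index $\dim W=n+1$, and by construction $g^{-1}(0)=W$ and $dg(w)=B(w)$ for all $w\in W$.

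Next, exactly the same cut-off convex-combination argument as in the proof of Proposition \ref{prop:submanifoldmap} produces a continuous map $B':[0,1]\times M\rightarrow\Phi_{n+1}(\mR\times\mH,\mH)$, homotopic to $B$ relative $W$, coinciding with $dg$ on an open neighborhood $V$ of $W$ whose closure lies in $\mathcal{N}$. Choose an open $U$ with $W\subset U$ and $\overline{U}\subset V$; since $g^{-1}(0)=W\subset U$, the point $0$ belongs to $\mH\setminus g(\partial U)$. Applying Theorem \ref{thm:totalextension} to the manifold with boundary $[0,1]\times M$, with data $U,V,\mathcal{N},g,B'$, yields a proper Fredholm map $h:[0,1]\times M\rightarrow\mH$ of index $n+1$ with $h|_U=g|_U$, with $dh$ homotopic to $B'$ (and hence to $B$) relative $U$, and with a neighborhood $Z\subset\mH$ of $0$ such that $h^{-1}(Z)=g^{-1}(Z)\cap U=W$. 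In particular $h^{-1}(0)=W$, $dh|_W=dg|_W=B|_W$, and $dh$ is homotopic to $B$ relative $W$.

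The one step that genuinely differs from the absolute case is the tubular neighborhood construction: $W$ is a submanifold with boundary of the manifold with boundary $[0,1]\times M$, with $\partial W\subset\partial([0,1]\times M)$. This is the step I expect to require the most care. However, the defining property of a cobordism is that $W$ has product form $[0,\epsilon)\times X_0$ near $t=0$ and $(1-\epsilon,1]\times X_1$ near $t=1$, while $B$ is correspondingly independent of $t$ there, so the construction reduces locally near the boundary to the tubular neighborhood theorem for the closed submanifolds $X_0,X_1\subset M$ crossed with a collar, and in the interior to the standard theorem. Gluing these pieces via a partition of unity yields the required $\phi$, after which the rest of the argument runs as above.
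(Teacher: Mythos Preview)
Your proposal is correct and follows exactly the approach the paper intends—the paper itself omits the proof, saying only that it ``is similar'' to that of Proposition \ref{prop:submanifoldmap}. You have correctly identified and handled the one genuinely new ingredient, namely the tubular neighborhood of $W$ as a submanifold with boundary inside the manifold with boundary $[0,1]\times M$, which the product structure of $(W,B)$ near $\partial W$ makes straightforward.
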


\section{Construction of proper Fredholm homotopies}

In order to show that proper Fredholm maps with cobordant Pontryagin manifolds are proper Fredholm homotopic, it is useful to consider the following particular case.

\begin{proposition}
\label{prop:equalframedsubmanifolds}
Let $f,g:M\rightarrow \mH$ be proper Fredholm maps of index $n\in \mZ$ having 0 as a regular value and such that 
\[
X:=f^{-1}(0)=g^{-1}(0), \qquad df|_X=dg|_X.
\] 
Assume moreover that the maps $df,dg:M\rightarrow \Phi_n(\mH)$ are homotopic relative $X$. Then $f$ and $g$ are proper Fredholm homotopic relative $X$.
\end{proposition}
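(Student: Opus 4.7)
The plan is to apply the proper Fredholm extension theorem (Theorem~\ref{thm:totalextension}) to the Hilbert manifold-with-boundary $[0,1]\times M$. I would prescribe a Fredholm map $F$ on an open neighborhood $W_0$ of the ``skeleton'' $\mathcal{S}:=(\{0,1\}\times M)\cup([0,1]\times X)$ that realizes $f$ near $t=0$, $g$ near $t=1$, and vanishes on $[0,1]\times X$, and then extend it to all of $[0,1]\times M$.

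First, I would fix a smooth cutoff $\eta:[0,1]\to[0,1]$ with $\eta\equiv 0$ near $0$ and $\eta\equiv 1$ near $1$. Because $df|_X=dg|_X$ and $\Phi_n(\mH)$ is open in $L(\mH)$, I can choose an open neighborhood $V$ of $X$ in $M$ such that $(1-\tau)df(x)+\tau dg(x)\in\Phi_n(\mH)$ for every $x\in V$ and $\tau\in[0,1]$. Pick a smooth cutoff $\chi:M\to[0,1]$ with $\chi\equiv 1$ on a smaller open neighborhood $V''$ of $X$ and $\chi\equiv 0$ outside $V$, set
\[
W_0 := \bigl([0,\tfrac14)\times M\bigr)\cup\bigl((\tfrac34,1]\times M\bigr)\cup\bigl([0,1]\times V''\bigr),
\]
and define $F(t,x):=(1-\eta(t))f(x)+\eta(t)g(x)$ on $W_0$. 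Then $F$ is smooth, Fredholm of index $n+1$, coincides with $f$ near $t=0$ and with $g$ near $t=1$, and vanishes identically on $[0,1]\times X$. Using the given relative-$X$ homotopy $h:[0,1]\times M\to\Phi_n(\mH)$ between $df$ and $dg$ (reparametrized so that $h\equiv df$ on $[0,\tfrac14]$ and $h\equiv dg$ on $[\tfrac34,1]$), I would define a continuous extension of $dF$ by
\[
A(t,x)(s,u):=s\eta'(t)\bigl(g(x)-f(x)\bigr)+\bigl[\chi(x)\bigl((1-\eta(t))df(x)+\eta(t)dg(x)\bigr)+(1-\chi(x))h(t,x)\bigr]u.
\]
For $V$ small enough the bracketed operator stays in $\Phi_n(\mH)$ everywhere, so $A$ takes values in $\Phi_{n+1}(\mR\times\mH,\mH)$, and by construction $A|_{W_0}=dF$.

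The main obstacle is verifying the properness hypothesis (ii) of Theorem~\ref{thm:totalextension} on the closure of an intermediate open set $V_0:=([0,\tfrac15)\times M)\cup((\tfrac45,1]\times M)\cup([0,1]\times V')$ with $\overline{V'}\subset V''$. On the two slabs $F$ reduces to $f$ or $g$, both proper by assumption; the delicate region is $[0,1]\times\overline{V'}$. Here the hypothesis $df|_X=dg|_X$ is decisive: by the implicit function theorem in a tubular chart around a point $p\in X$, in coordinates $(q,w)$ adapted to the normal bundle of $X$ the map $f$ takes the form $(q,w)\mapsto w$, while $g$ takes the form $(q,w)\mapsto w+O(\|w\|^2)$ uniformly on compact subsets of $X$. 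Consequently $F(t,q,w)=w+O(\|w\|^2)$, and the inverse function theorem shows that $F^{-1}(K)$ near $[0,1]\times X$ is a graph over a compact piece of $[0,1]\times X\times K$, hence compact. Compactness of $X$ and a finite cover argument then upgrade this to global properness of $F$ on $[0,1]\times\overline{V'}$, for $V'$ chosen small enough. Hypothesis~(iii) holds with $z=0$: indeed $F^{-1}(0)\cap W_0=[0,1]\times X$ lies in the interior of $U_0$, so $0\notin F(\partial U_0)$, exactly as in the proof of Proposition~\ref{prop:submanifoldmap}.

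Applying Theorem~\ref{thm:totalextension} with these data produces a proper Fredholm map $\overline F:[0,1]\times M\to\mH$ of index $n+1$ with $\overline F|_{U_0}=F|_{U_0}$, where $U_0$ is a slightly shrunken open neighborhood of $\mathcal{S}$ inside $V_0$. Since $U_0$ contains neighborhoods of $\{0\}\times M$, $\{1\}\times M$, and $[0,1]\times X$, we conclude $\overline F(0,\cdot)=f$, $\overline F(1,\cdot)=g$, and $\overline F(t,x)=0$ for every $(t,x)\in[0,1]\times X$, so $\overline F$ is the desired proper Fredholm homotopy between $f$ and $g$ relative to $X$.
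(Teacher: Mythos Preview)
Your approach is correct and is in fact a genuine streamlining of the paper's argument. The paper proceeds in three steps: first it uses the convex interpolation $(1-t\chi)f + t\chi g$ near $X$ together with one application of Theorem~\ref{thm:totalextension} to replace $f$ by a map $f_1$ that \emph{coincides} with $g$ on a neighborhood of $X$; then it upgrades the given homotopy between $df$ and $dg$ to one that is constant on a whole neighborhood of $X$; only then does it apply Theorem~\ref{thm:totalextension} a second time to the piecewise map $k$ (equal to $f_1$ for small $t$, to $g$ for large $t$, and to their common value in between) to produce the final homotopy. You collapse all of this into a single application of the extension theorem by letting the convex interpolation $(1-\eta(t))f + \eta(t)g$ itself play the role of the partial map on the tube $[0,1]\times V''$, and by blending the convex combination of differentials with the given homotopy $h$ via the cutoff $\chi$ to obtain the global framing $A$. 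This works precisely because $df|_X = dg|_X$ forces all the operators involved (the convex combinations and $h(t,\cdot)$) to be close to $df$ on a small enough $V$, so every further convex combination stays Fredholm. What you gain is economy: one invocation of Theorem~\ref{thm:totalextension} instead of two, and no intermediate modification of $f$. What the paper's decomposition buys is modularity: Step~1 isolates the purely local statement that two maps with the same $1$-jet along $X$ can be deformed to agree near $X$.

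One point deserves more care. Your properness argument for $F$ on $[0,1]\times\overline{V'}$ invokes the inverse function theorem and speaks of $F^{-1}(K)$ as a ``graph'', but $F$ has index $n+1$, so this language does not quite apply. The clean argument is the one the paper uses in its Step~1: the Taylor estimate $F(t,q,w)=w+O(\|w\|^2)$ (your observation) combined with the inner-product trick $\langle F(t,q,w),w\rangle > 0$ for $0<\|w\|<\delta$ shows $F^{-1}(0)\cap W_0 = [0,1]\times X$; then, since Fredholm maps are locally proper and $[0,1]\times X$ is compact, one may shrink $V'$ so that $F$ is proper on $[0,1]\times\overline{V'}$. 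With that adjustment your argument goes through. (Also, in generic tubular coordinates both $f$ and $g$ are $w+O(\|w\|^2)$; the form $f(q,w)=w$ holds only if you use $f$ itself to define the normal coordinate, but either normalization yields the needed estimate on $F$.)
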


\begin{proof} 
The proof consists of three steps. In the first step we show that we can reduce the general case to the situation in which $f$ and $g$ coincide in a neighborhood of $X$. In the second step we show that we can also assume the homotopy between $df$ and $dg$ to be relative a neighborhood of $X$. In the third and last step we apply the extension Theorem \ref{thm:totalextension} and obtain the result.  

\medskip

\noindent \emph{Step 1:} By the tubular neighborhood theorem, an open neighborhood of $X$ is diffeomorphic to the normal bundle of $X$. The fiber of this vector bundle can be identified with $\mH$, and since $GL(\mH)$ is contractible this bundle is trivial. Therefore, a neighborhood of $X$ in $M$ can be identified with the product $X\times \mH$. After this identification, the maps $f$ and $g$ can be seen as maps $F,G:X\times \mH\rightarrow \mH$ such that
\[
F(x,0)=G(x,0)=0 \quad \mbox{and} \quad dF(x,0)(u,v)=dG(x,0)(u,v)=v\quad \forall x\in X, \;u\in T_xX, \; v\in \mH.
\]
From Taylor's formula
\[
F(x,u) = u + o(\|u\|) \quad \mbox{and} \quad G(x,u) = u + o(\|u\|) \qquad \mbox{for } u\rightarrow 0, \mbox{ uniformly in } x\in X,
\]
we deduce that there is a positive number $\delta$ such that
\[
\langle F(x,u),u \rangle > 0 \quad \mbox{and} \quad \langle G(x,u),u \rangle >0 \qquad \forall (x,u)\in X\times \bigl(B_{\delta}(0)\setminus \{0\} \bigr),
\]
where $B_{\delta}(0)$ denotes the open ball of radius $\delta$ and center $0$ in $\mH$.
It follows that 
\begin{equation}
\label{conve}
(1-s) F(x,u) + s G(x,u) = 0 \mbox{ for some } (s,x,u)\in [0,1]\times X \times B_{\delta}(0)  \mbox{ if and only if } u =0.
\end{equation}
Since $dF(x,0)=dG(x,0)$, up to the choice of a smaller $\delta$ we may assume that for every $(x,u)\in X \times B_{\delta}(0)$ every convex combination between $dF(x,0)$ and $dG(x,0)$ is Fredholm. 

Let $\chi:[0,+\infty) \rightarrow [0,1]$ be a smooth function supported in $[0,\delta)$ and such that $\chi(t)=1$ for every $t\in [0,\delta/2]$. Consider the smooth homotopy
\[
H: [0,1]\times X \times \mH \rightarrow \mH, \qquad H(t,x,u) = \bigl(1-t\chi(\|u\|) \bigr)F(x,u)+t\chi(\|u\|)G(x,u).
\]
The map $H(0,\cdot)$ coincides with $F$, while $H(1,\cdot)$ agrees with $G$ on $X\times B_{\delta/2}(0)$. Moreover, $H(t,\cdot)$ coincides with $F$ on the complement of $X\times B_{\delta}(0)$, and the differential of this map at $(x,0)$ coincides with $dF(x,0) = dG(x,0)$, for every $x\in X$.
By property (\ref{conve}), the preimage of $0$ by $H$ is $[0,1]\times X \times \{0\}$. Moreover, the differential of $H(t,\cdot)$ at $(x,u)$  is a finite rank perturbation of the operator 
\[
 \bigl(1-t\chi(\|u\|) \bigr)dF(x,u)+t\chi(\|u\|)dG(x,u). 
 \]
This operator is Fredholm if $\|u\|< \delta$, by the above choice of $\delta$, and also for $\|u\|\geq \delta$, because in this case it coincides with the Fredholm operator $dF(x,u)$. Therefore, $H$ is a Fredholm map.

Since $H(t,\cdot)$ agrees with $F$ outside a bounded neighborhood of $X\times \{0\}$ in $X\times \mH$,  $H$ extends to a Fredholm homotopy 
\[
h: [0,1]\times M \rightarrow \mH,
\]
such that $h(t,\cdot)$ coincides with $f$ outside of a neighborhood of $X$ in $M$. By a time reparameterization, we can assume $h(t,\cdot)$ to be equal to $f$ for $t\in[0,1/2]$. By construction, we also have $h^{-1}(0)=[0,1] \times X$, $dh(t,\cdot) = df = dg$ on $X$, and $h(1,\cdot)=g$ on an open neighborhood of $X$. 

As Fredholm maps are locally proper and $[0,1]\times X$ is compact, there is an open neighborhood $V_0$ of $X$ in $M$ such that $h$ is proper on $[0,1]\times \overline{V}$, and hence also on the closure of the  open subset of $[0,1]\times M$
\[
V := \bigl([0,1]\times V_0\bigr) \cup \bigl([0,1/2)\times M\bigr),
\]
since $h(t,\cdot)=f$ for $t\in [0,1/2]$ and $f$ is proper. Let $U_0\subset M$ be an open neighborhood of $X$ such that $\overline{U_0} \subset V$ and set
\[
U:= \bigl([0,1]\times U_0\bigr) \cup \bigl([0,1/4)\times M\bigr),
\]
so that $[0,1]\times X\subset U$ and $\overline{U}\subset V$. Since $\partial U$ is disjoint from $[0,1]\times X$, the point $0$ does not belong to $h(\partial U)$. By applying Theorem \ref{thm:totalextension} to the Fredholm map $h$, to the sets $U$, $V$, $W:= [0,1]\times M$, and to the point $z=0$, we obtain a proper Fredholm map $\overline{h}:[0,1]\times M \rightarrow \mH$ which agrees with $h$ on $U$ and such that 
\[
\overline{h}^{-1}(0) = [0,1]\times X.
\]
Then the map $f_1:= \overline{h}(1,\cdot)$ has the following properties: $f_1$ is a proper Fredholm map,  $f_1^{-1}(0)=X$, $f_1$ is proper Fredholm homotopic to $f$ relative $X$, $df_1$ is homotopic to $df$ relative $X$, and $f_1 = g$ on a neighborhood of $X$. We have therefore reduced the general case to the situation in which $f$ and $g$ coincide in a neighborhood of their common Pontryagin manifold $X$.

\medskip

\noindent \emph{Step 2:} By Step 1, we can assume that $f$ and $g$ agree on an open neighborhood $N_0$ of $X$. Let $A:[0,1]\times M \rightarrow \Phi_n(\mH)$ be a homotopy relative $X$ between $df$ and $dg$. Since $A(t,\cdot)=df=dg$ on $X$, up to choice of a smaller neighborhood $N_0$ we may assume that for every $(t,x)\in [0,1]\times N_0$ each convex combination between $A(t,x)$ and $df(x)=dg(x)$ is Fredholm of index $n$. Now let $\psi: M \rightarrow [0,1]$ be a smooth function  with support in $N_0$ and such that $\psi=1$ on an open neighborhood $N_1$ of $X$. The map
\[
B(t,x) := (1-\psi(x)) A(t,x) + \psi(x) df(x) = (1-\psi(x)) A(t,x) + \psi(x) dg(x), \qquad (t,x)\in [0,1]\times M,
\]
is smooth, takes values into $\Phi_n(\mH)$ and satisfies
\[
\begin{split}
B(0,x) &= df(x) \mbox{ and } B(1,x) = dg(x) \quad \forall x\in M, \\ B(t,x) &= df(x) = dg(x) \quad \forall (t,x)\in [0,1]\times N_1.
\end{split}
\]
Therefore, $df$ and $dg$ are homotopic relative the open neighborhood $N_1$ of $X$.

\medskip

\noindent  \emph{Step 3:} Let $N_2$ and $N_3$ be open neighborhoods of $X$ with $\overline{N_3} \subset N_2 \subset \overline{N_2} \subset N_1$ and consider the following open subsets of $[0,1]\times M$
\[
\begin{split}
W &:= \Bigl( \bigl([0,1] \setminus [1/3,2/3] \bigr) \times M \Bigr) \cup \Bigl( [0,1] \times N_1 \Bigr), \\
V &:= \Bigl( \bigl([0,1] \setminus [1/4,3/4] \bigr) \times M \Bigr) \cup \Bigl( [0,1] \times N_2 \Bigr), \\
U &:= \Bigl( \bigl([0,1] \setminus [1/5,5/6] \bigr) \times M \Bigr) \cup \Bigl( [0,1] \times N_3 \Bigr),
\end{split}
\]
which satisfy $\overline{U} \subset V \subset \overline{V} \subset W$.
The map
\[
k: W \rightarrow \mH, \qquad k(t,x) = \left\{ \begin{array}{ll} f(x) & \mbox{for } t<1/3, \\ g(x) & \mbox{for } t>2/3, \\ f(x) = g(x) & \mbox{for } 1/3\leq t \leq 2/3, \end{array} \right.
\]
is smooth and locally constant in $t$. Moreover,
\[
df(t,x)[(s,u)] = \left\{ \begin{array}{ll} df(x)[u] & \mbox{for } t<1/3, \\ dg(x)[u] & \mbox{for } t>2/3, \\ df(x)[u] = dg(x)[u] & \mbox{for } 1/3\leq t \leq 2/3, \end{array} \right. \quad \forall (t,x) \in W, \; (s,u) \in \mR \times \mH,
\]
so $k$ is Fredholm of index $n+1$. From the properness of $f$ and $g$ we deduce that $k$ is proper on $\overline{V}$. Moreover, $k^{-1}(0) = [0,1]\times X$, and hence $0$ does not belong to the image of $\partial U$ by $k$. We can reparametrize the homotopy $B$ relative $N_1$ between $df$ and $dg$ so that $B(t,\cdot)=df$ for $t<1/3$ and $B(t,\cdot)=dg$ for $t>2/3$. Then the continuous map
\[
\begin{split}
K : [0,1]\times M &\rightarrow \Phi_{n+1}(\mR \times \mH, \mH), \\ K(t,x)[(s,u)] &= B(t,x)[u] \qquad \forall (t,x) \in [0,1]\times M, \; (s,u)\in \mR\times \mH,
\end{split}
\]
satisfies $K|_W=dk$. By applying Theorem \ref{thm:totalextension} to the data $k$, $K$, $U$, $V$, $W$ and $z=0$, we obtain a proper Fredholm map $\overline{k} : [0,1] \times M \rightarrow \mH$ of index $n+1$ such that $\overline{k}=k$ on $U$. This map is a proper Fredholm homotopy between $f$ and $g$ relative $X$.
\end{proof}

\section{The homotopy classification theorem for proper Fredholm maps}

We can finally determine the space $\mathcal{F}^{\mathrm{prop}}[M,\mH]$ of all proper Fredholm maps from $M$ into $\mH$ modulo proper Fredholm homotopies. By a particular case of Proposition \ref{prop:homotopycobordant}, the framed cobordism class of the Pontryagin manifold of a proper Fredholm map $f$ at a regular value $y$ does not depend on the regular value $y$. Therefore, when we consider framed submanifolds of $M$ modulo framed cobordism we can talk about the Pontryagin manifold of a map without specifying the regular value. The next result is Theorem \ref{main2} from the Introduction:

\begin{theorem}
\label{thm:properframed}
Let $f,g:M\rightarrow \mH$ be proper Fredholm maps. Then $f$ is proper Fredholm homotopic to $g$ if and only if the Pontryagin manifolds of $f$ and $g$ are framed cobordant. Moreover, the map which to each proper Fredholm map $f:M \rightarrow \mH$ of index $n$ associates the framed cobordism class of its Pontryagin manifold at some regular value induces a bijection from $\mathcal{F}_n^{\mathrm{prop}}[M,\mH]$ to the quotient space of framed $n$-dimensional compact submanifolds of $M$ modulo framed cobordism.
\end{theorem}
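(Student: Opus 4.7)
The plan is to assemble Propositions~\ref{prop:homotopycobordant}, \ref{prop:submanifoldmap}, \ref{new}, and \ref{prop:equalframedsubmanifolds} into a bijection. Proposition~\ref{prop:homotopycobordant} already shows that proper Fredholm homotopic maps have framed cobordant Pontryagin manifolds and that the framed cobordism class is independent of the chosen regular value, so the assignment $[f]\mapsto [(f^{-1}(y),df)]$ is a well-defined map from $\mathcal{F}_n^{\mathrm{prop}}[M,\mH]$ to the set of framed cobordism classes of $n$-dimensional framed submanifolds of $M$. Surjectivity is supplied by Proposition~\ref{prop:submanifoldmap} together with Remark~\ref{persu}: given a framed submanifold $(X,A)$, the proposition produces a proper Fredholm map $f$ with $f^{-1}(0)=X$ and $df|_X=A|_X$, and the explicit trivial framed cobordism of the remark identifies $[(f^{-1}(0),df)]$ with $[(X,A)]$.

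For injectivity, suppose $(f^{-1}(y),df)$ and $(g^{-1}(z),dg)$ are framed cobordant. I will first reduce to the case $y=z=0$ by replacing $f$ and $g$ with $f-y$ and $g-z$; the straight-line homotopies $(t,x)\mapsto f(x)-ty$ and $(t,x)\mapsto g(x)-tz$ are proper Fredholm homotopies by the same argument as in Lemma~\ref{lem:indeppert}. With a framed cobordism $(W,B)$ in $[0,1]\times M$ between $(f^{-1}(0),df)$ and $(g^{-1}(0),dg)$ in hand, I apply Proposition~\ref{new} to produce a proper Fredholm map $h:[0,1]\times M\to\mH$ of index $n+1$ with $h^{-1}(0)=W$, $dh|_W=B|_W$, and $dh$ homotopic to $B$ relative $W$. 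The slices $\tilde f:=h(0,\cdot)$ and $\tilde g:=h(1,\cdot)$ are proper (since $h^{-1}(K)\cap(\{0\}\times M)$ is compact for compact $K\subset\mH$) and Fredholm of index $n$, because the restriction of an operator in $\Phi_{n+1}(\mR\times\mH,\mH)$ to the codimension-one closed subspace $\{0\}\times\mH$ is Fredholm of index $n$. The map $h$ is by construction a proper Fredholm homotopy from $\tilde f$ to $\tilde g$.

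It remains to link $f$ with $\tilde f$, and symmetrically $g$ with $\tilde g$, via Proposition~\ref{prop:equalframedsubmanifolds}. The boundary conditions on $(W,B)$ yield $\tilde f^{-1}(0)=f^{-1}(0)=:X_0$ and $d\tilde f|_{X_0}=df|_{X_0}$; the homotopy between $dh$ and $B$ is relative $W$, so its restriction to $t=0$ followed by restriction to $\{0\}\times\mH$ gives a homotopy in $\Phi_n(\mH)$ between $d\tilde f$ and $df$ that is constant on $X_0$. Proposition~\ref{prop:equalframedsubmanifolds} then produces a proper Fredholm homotopy between $f$ and $\tilde f$, and likewise between $g$ and $\tilde g$; concatenating these with $h$, after the standard time reparametrization that makes each homotopy locally constant near its endpoints, yields the desired proper Fredholm homotopy from $f$ to $g$. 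The most delicate point is the bookkeeping in this last step: one must verify not merely that $d\tilde f$ and $df$ are homotopic in $\Phi_n(\mH)$ but that they are homotopic relative $X_0$, as demanded by Proposition~\ref{prop:equalframedsubmanifolds}, and this is precisely where the relative-to-$W$ clause built into Proposition~\ref{new} is essential.
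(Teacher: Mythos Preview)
Your proposal is correct and follows essentially the same route as the paper's proof: both assemble Propositions~\ref{prop:homotopycobordant}, \ref{prop:submanifoldmap} (with Remark~\ref{persu}), \ref{new}, and \ref{prop:equalframedsubmanifolds} in the way you describe, using the map $h$ from Proposition~\ref{new} to bridge between $f$ and $g$ via its endpoint slices and then invoking Proposition~\ref{prop:equalframedsubmanifolds} at each end. Your write-up is slightly more explicit than the paper's on the reduction to a common regular value and on why the slices $h(0,\cdot)$, $h(1,\cdot)$ are proper Fredholm of index $n$, but the argument is the same.
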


\begin{proof}
The ``only if'' part of the first statement is Proposition~\ref{prop:homotopycobordant}. Suppose that $f,g$ have framed cobordant Pontryagin manifolds, without loss of generality at a common regular value $y\in \mH$ for both $f$ and $g$.
Let $(W,B)$ be the corresponding framed cobordism between $(f^{-1}(y),df)$ and $(g^{-1}(y),dg)$. By Proposition~\ref{new} there exists a proper Fredholm map $h:[0,1]\times M\rightarrow \mH$ such that 
\[
h^{-1}(y) = W, \qquad dh|_W = B|_W
\]
and $dh$ is homotopic to $B$ relative $W$. By the properties of $(W,B)$, the proper Fredholm maps $h_0:= h(0,\cdot)$ and $h_1:= h(1,\cdot)$ have $y$ as a regular value and satisfy
\[
h_0^{-1}(y) = f^{-1}(y), \quad dh_0|_{f^{-1}(y)} = df|_{f^{-1}(y)}, \qquad h_1^{-1}(y) = g^{-1}(z), \quad dh_1|_{g^{-1}(z)} = dg|_{g^{-1}(z)}.
\]
From the fact that $dh$ and $B$  are homotopic relative $W$, we deduce that $dh_0$ is homotopic to $df$ relative $X$ and $dh_1$ is homotopic to $dg$ relative $X$. Therefore, Proposition~\ref{prop:equalframedsubmanifolds} implies that $h_0$ is proper Fredholm homotopic to $f$, and similarly $h_1$ is proper Fredholm homotopic to $g$. Since $h_0$ and $h_1$ are proper Fredholm homotopic, we conclude that $f$ and $g$ are proper Fredholm homotopic. This concludes the proof of the first statement.

This statement implies that the map which is defined in the second statement is well defined and injective. The surjectivity of this map is a consequence of Proposition \ref{prop:submanifoldmap} by what we observed in Remark \ref{persu}.
\end{proof}

\section{Computation of framed cobordisms for non-positive index}
\label{sec:nonpositive}
If $n$ is a negative integer, an $n$-dimensional framed submanifold of $M$ is just a pair $(\emptyset,A)$, where $A$ is a continuous map from $M$ into $\Phi_n(\mH)$. Two such $n$-dimensional framed submanifolds $(\emptyset,A_0)$ and $(\emptyset, A_1)$ are framed cobordant if and only if the maps $A_0$ and $A_1$ are homotopic. Therefore, Theorem \ref{thm:properframed} has the following direct consequence, which is Theorem \ref{main3} from the Introduction.

\begin{theorem}
Let $n$ be a negative integer. Then two proper Fredholm maps $f,g: M \rightarrow \mH$ of index $n$ are proper Fredholm homotopic if and only if their differentials are homotopic as maps from $M$ into $\Phi_n(\mH)$. Moreover, the map $f\mapsto df$ induces a bijection
\[
\mathcal{F}_n^{\mathrm{prop}}[M,\mH] \cong [M,\Phi_n(\mH)].
\]
\end{theorem}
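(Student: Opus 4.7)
The plan is to derive this theorem as an essentially formal corollary of the main classification Theorem~\ref{thm:properframed}. The crucial simplification when $n<0$ is purely dimensional: an $n$-dimensional compact submanifold of $M$ is forced to be empty (recall that $\emptyset$ is treated as a manifold of any dimension). In particular, for any proper Fredholm map $f:M\rightarrow\mH$ of index $n$ and any regular value $y$, Smale's theorem tells us that $f^{-1}(y)$ is an $n$-dimensional compact submanifold, hence $f^{-1}(y)=\emptyset$, and the Pontryagin manifold of $f$ at $y$ is simply $(\emptyset,df)$. Since the kernel condition in Definition~\ref{defi:framing} is vacuous when $X=\emptyset$, an $n$-dimensional framed submanifold of $M$ is nothing but an arbitrary continuous map $A:M\rightarrow\Phi_n(\mH)$.

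The substantive step is to show that two such framed submanifolds $(\emptyset,A_0)$ and $(\emptyset,A_1)$ are framed cobordant if and only if $A_0$ and $A_1$ are homotopic as maps into $\Phi_n(\mH)$. For the forward direction, given any framed cobordism $(W,B)$ (note that for $n<-1$ the set $W$ is automatically empty by the dimension argument, while for $n=-1$ the manifold $W$ could in principle be a finite collection of interior points, but this does not affect the construction), the assignment $(t,x)\mapsto B(t,x)|_{\{0\}\times\mH}$ furnishes a continuous map into $\Phi_n(\mH)$, as already noted in the excerpt. The boundary conditions $B(t,x)(s,u)=A_0(x)u$ for $t$ near $0$ and $B(t,x)(s,u)=A_1(x)u$ for $t$ near $1$ guarantee that this restriction is the desired homotopy. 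For the reverse direction, given a homotopy $H:[0,1]\times M\rightarrow\Phi_n(\mH)$ between $A_0$ and $A_1$, reparametrized so as to be locally constant in $t$ near $t=0$ and $t=1$, I set $W:=\emptyset$ and define
\[
B(t,x)(s,u):=H(t,x)u.
\]
Then $B(t,x)$ has kernel $\mR\oplus\ker H(t,x)$ and cokernel $\coker H(t,x)$, so it is Fredholm of index $n+1$; the pair $(\emptyset,B)$ is thus a framed cobordism from $(\emptyset,A_0)$ to $(\emptyset,A_1)$.

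Combining these two observations with Theorem~\ref{thm:properframed} yields the ``if and only if'' statement: two index-$n$ proper Fredholm maps $f,g:M\rightarrow\mH$ are proper Fredholm homotopic iff their Pontryagin manifolds are framed cobordant iff $df\simeq dg$ in $[M,\Phi_n(\mH)]$. Consequently, $[f]\mapsto[df]$ descends to a well-defined injection $\mathcal{F}_n^{\mathrm{prop}}[M,\mH]\rightarrow[M,\Phi_n(\mH)]$. Surjectivity is the empty-submanifold case of Proposition~\ref{prop:submanifoldmap}, which is explicitly recorded there: every continuous $A:M\rightarrow\Phi_n(\mH)$ is homotopic to $df$ for some proper Fredholm map $f:M\rightarrow\mH$ of index $n$. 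I do not anticipate any genuine obstacle; the entire argument is an unraveling of the relevant definitions in the negative-index regime, and the only point worth flagging is the $n=-1$ subtlety addressed above.
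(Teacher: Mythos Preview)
Your proposal is correct and follows essentially the same route as the paper: both reduce the statement to Theorem~\ref{thm:properframed} via the observation that for $n<0$ a framed $n$-submanifold is just a continuous map $A:M\to\Phi_n(\mH)$ and framed cobordism reduces to homotopy of such maps. You simply spell out more explicitly what the paper leaves as a one-line remark, including the construction $B=0\oplus H$ for the reverse implication and the (harmless) $n=-1$ subtlety about possible interior points of $W$; surjectivity is already packaged into Theorem~\ref{thm:properframed}, so the separate appeal to Proposition~\ref{prop:submanifoldmap} is redundant but not wrong.
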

If $M$ has to homotopy type of a compact Hausdorff space the Atiyah-J\"anich theorem states that $[M,\Phi_n(\mH)]$ is isomorphic to the reduced $K$-theory of real vector bundles of $M$, hence this set of homotopy classes is reasonably computable. Now we want to study the case of index zero. In order to simplify the discussion, we assume throughout the remaining part of this section that the manifold $M$ is connected. 

A $0$-dimensional framed submanifold of $M$ is a pair $(X,A)$ where $X$ is a finite subset of $M$ and $A:M \rightarrow \Phi_0(\mH)$ is a continuous map such that $A(x)\in GL(\mH)$ for every $x\in X$. Our aim is to understand when two such $0$-dimensional framed submanifolds $(X_0,A_0)$ and $(X_1,A_1)$ are framed cobordant. Obvious necessary conditions are that the sets $X_0$ and $X_1$ have the same parity and that the maps $A_0$ and $A_1$ are homotopic. These conditions are in general not sufficient, as we are going to show.

We recall that the fundamental group of $\Phi_0(\mH)$ is isomorphic to $\mZ_2$ and, since $GL(\mH)$ is contractible, the relative homotopy set $\pi_1(\Phi_0(\mH),GL(\mH))$ has two elements. We endow $\pi_1(\Phi_0(\mH),GL(\mH))$ with a group structure as follows: For two paths $\gamma,\gamma':[0,1]\rightarrow \Phi_0(\mH)$ with $\gamma(0),\gamma'(0),\gamma(1),\gamma'(1)\in GL(\mH)$ we choose a path $\beta:[0,1]\rightarrow GL(\mH)$ with $\beta(0)=\gamma(1)$, and $\beta(1)=\gamma'(0)$ and define the product $[\gamma][\gamma']$ to be the homotopy class relative endpoints of the concatenation $\gamma\#\beta\#\gamma'$. This does not depend on the choices made and the group $\pi_1(\Phi_0(\mH),GL(\mH))$ is isomorphic to $\mZ_2$. The identity element of this group is given by the homotopy classes of paths which are homotopic with fixed ends to a path fully contained in $GL(\mH)$.
The isomorphism
\[
\pi_1(\Phi_0(\mH),GL(\mH)) \cong \mZ_2
\]
is given by the mod 2 intersection number with the codimension 1 set $\Phi_0(\mH)\setminus GL(\mH)$. This intersection number can be defined in the following way: The set
\[
\Sigma:= \{A \in \Phi_0(\mH) \mid \dim \ker A = 1\}
\]
is a smooth hypersurface in $\Phi_0(\mH)$ whose closure is $\Phi_0(\mH) \setminus GL(\mH)$. Any continuous path $A: [0,1] \rightarrow \Phi_0(\mH)$ with end points in $GL(\mH)$ is homotopic with fixed ends to a smooth path $\tilde{A}$ which meets $\Phi_0(\mH) \setminus GL(\mH)$ only in $\Sigma$ and transversally. Then the mod 2 intersection number of $A$ with $\Phi_0(\mH) \setminus GL(\mH)$ is defined as the parity of the finite set $\tilde{A}^{-1}(\Sigma)$. A standard argument shows that this definition does not depend on the choice of the homotopic path $\tilde{A}$. See \cite{fpr92,fpr94} for more details.

If $A:[0,1] \rightarrow \Phi_0(\mH)$ is a continuous path with end points in $GL(\mH)$, the triviality of $A$ in $\pi_1(\Phi_0(\mH),GL(\mH))$ can also be checked by looking at the pull-back of the determinant bundle 
$\det \rightarrow \Phi(\mH)$ by $A$, see Remark~\ref{rem:det}. Indeed, $A^*(\det)$ is a line bundle over $[0,1]$, whose fiber at the end-points is the line $\Lambda^0(\{0\}) \otimes \Lambda^0(\{0\})^* \cong \mR \otimes \mR^*$, so it defines a line bundle over $[0,1]/\{0,1\}\cong S^1$, and $A$ is trivial in $\pi_1(\Phi_0(\mH),GL(\mH))$ if and only if the latter line bundle is trivial. See \cite{wan05,tw09} for more details.

It will be convenient to make use of the following definition:

\begin{definition}
A continuous map $A:M \rightarrow \Phi_0(\mH)$ is said to be orientable if the induced homomorphism
\[
A_* : \pi_1(M) \rightarrow \pi_1(\Phi_0(\mH)) \cong \mZ_2
\]
is zero. Otherwise it is said to be non-orientable.
\end{definition}

The orientability of $A:M \rightarrow \Phi_0(\mH)$ depends only on the homotopy class of $A$. Therefore, the set $[M,\Phi_0(\mH)]$ has a partition
\[
[M,\Phi_0(\mH)] = [M,\Phi_0(\mH)]_{\mathrm{or}} \sqcup [M,\Phi_0(\mH)]_{\mathrm{no}},
\]
where
\[
\begin{split}
[M,\Phi_0(\mH)]_{\mathrm{or}} :&= \{ [A] \in [M,\Phi_0(\mH)] | A \mbox{ orientable} \}, \\
[M,\Phi_0(\mH)]_{\mathrm{no}} :&= \{ [A] \in [M,\Phi_0(\mH)] | A \mbox{ non-orientable} \}.
\end{split}
\]
Continuous maps $A:M \rightarrow \Phi_0(\mH)$ with a simply connected domain $M$ are always orientable.

Let $(X,A)$ be a $0$-dimensional framed submanifold of $M$ such that $A$ is orientable. In this case, we can associate to $(X,A)$ a number $|\mathrm{deg}| (X,A)$ in
\[
\mN_0 := \{0\} \cup \mN
\]
which we call absolute degree of $(X,A)$ and which was introduced by Fitzpatrick, Pejsachowicz and Rabier in \cite{fpr92} and \cite{fpr92}. The definition of $|\mathrm{deg}| (X,A)$ goes as follows. Let $x,x'$ be points in $X$ and let $\gamma:[0,1] \rightarrow M$ be a continuous path joining them. We declare the points $x$ and $x'$ to be equivalent if the path
\[
A\circ \gamma : \bigl([0,1], \{0,1\}\bigr) \rightarrow \bigl( \Phi_0(\mH), GL(\mH) \bigr)
\]
defines the trivial element of $\pi_1(\Phi_0(\mH), GL(\mH))$. The fact that $A_* : \pi_1(M) \rightarrow \pi_1(\Phi_0(\mH)) \cong \mZ_2$ is the zero homomorphism implies that this notion does not depend on the choice of the path $\gamma$ joining $x$ and $x'$. Moreover, from the group structure of $\pi_1(\Phi_0(\mH),GL(\mH))$ we deduce that this is an equivalence relation. This equivalence relation has at most two equivalence classes: if $x$ is not equivalent to $x'$, $x'$ is not equivalent to $x''$ and $\gamma$ and $\gamma'$ are paths joining $x$ to $x'$ and $x'$ to $x''$, respectively, then $A\circ \gamma$ and $A\circ \gamma'$ are non-trivial in $\pi_1(\Phi_0(\mH), GL(\mH))\cong \mZ_2$ and hence their juxtaposition is trivial, implying that $x$ and $x''$ are equivalent. If $X=X^+ \sqcup X^-$ is the partition of $X$ induced by this equivalence relation, we define the non-negative integer $|\mathrm{deg}|(X,A)$ as
\[
|\mathrm{deg}|(X,A) := \bigl| |X^+| - |X^-| \bigr| \in \mN_0.
\]
Notice that the parity of this integer coincides with the parity of $|X|=|X^+| + |X^-|$.

\begin{theorem}
\label{indexzero}
The two $0$-dimensional framed submanifolds $(X_0,A_0)$ and $(X_1,A_1)$ of the connected manifold $M$ are framed cobordant if and only if the following conditions are satisfied:
\begin{enumerate}[(i)]
\item The maps $A_0,A_1: M \rightarrow \Phi_0(\mH)$ are homotopic.
\item  If $A_0$ and $A_1$ are orientable, then $|\mathrm{deg}|(X_0,A_0) = |\mathrm{deg}|(X_1,A_1)$. If $A_0$ and $A_1$ are non-orientable, then $|X_0|=|X_1|$ mod 2.
\end{enumerate}
Furthermore, for every $k\in \mN_0$ the following statements hold: 
\begin{enumerate}[(a)]
 \item For every continuous map $\tilde{A}:M \rightarrow \Phi_0(\mH)$ there exists a $0$-dimensional framed submanifold $(X,A)$ of $M$ such that $|X|=k$ and $A$ is homotopic to $\tilde{A}$.
\item For every orientable continuous map $\tilde{A}:M \rightarrow \Phi_0(\mH)$ there exists a $0$-dimensional framed submanifold $(X,A)$ of $M$ such that $|\mathrm{deg}|(X,A)=k$ and $A$ is homotopic to $\tilde{A}$.

  \end{enumerate}
\end{theorem}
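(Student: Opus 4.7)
The plan is to establish the equivalence in Theorem~\ref{indexzero} by reducing arbitrary framed submanifolds to a common ``standard form,'' and to prove the realization statements (a) and (b) by explicit local constructions. Beginning with the \emph{necessity} of (i), the restriction $H(t,x) := B(t,x)|_{\{0\}\oplus\mH}$ is a continuous homotopy from $A_0$ to $A_1$ in $\Phi_0(\mH)$, as a direct computation of kernel and cokernel dimensions yields $\ind H(t,x) = \ind B(t,x) - 1 = 0$. For (ii) in the non-orientable case, each arc component of the compact 1-manifold $W$ contributes two boundary points to $X_0 \sqcup X_1$, giving $|X_0|\equiv|X_1|\pmod 2$. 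In the orientable case, I would use the determinant line bundle: $B^*(\det)\to[0,1]\times M$ is trivial, because its restriction to $\{0\}\times M$ is canonically isomorphic to $A_0^*(\det)$ (trivial by orientability of $A_0$) and $[0,1]\times M$ deformation retracts to $\{0\}\times M$. A global trivialization assigns a sign $\pm 1$ to each point of $X_0\sqcup X_1$, and two points of the same $X_t$ have the same sign if and only if they lie in the same equivalence class used to define $|\deg|$. Examining the tangent line $\ker B(\alpha(s))=T_{\alpha(s)}\alpha$ along each arc $\alpha$ and comparing with the canonical orientation of the kernel $\mR\oplus\{0\}$ at boundary points, one finds that arcs with both endpoints at the same $t$-level carry opposite signs while cross arcs carry the same sign, yielding $n_0^+ - n_0^- = n_1^+ - n_1^-$ and hence $|\deg|(X_0,A_0)=|\deg|(X_1,A_1)$.

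For the \emph{sufficiency} direction, I would fix a contractible open ball $U\subset M$, a slightly larger ball $V$ with $\overline U\subset V$, and a representative $\tilde A$ of the common homotopy class $[A_0]=[A_1]$, and reduce each framed submanifold via framed cobordism to the standard form consisting of $k$ distinct points in $U$, with framing equal to $\id\in GL(\mH)$ on $\overline U$ and to $\tilde A$ outside $V$. The reduction uses three moves, each realizable as an explicit framed cobordism: ambient isotopy of $M$ to bring the points of $X$ into $U$; pair cancellation; and homotopy of the framing rel $X$. The cancellation move (orientable case) picks points $x_+\in X^+$ and $x_-\in X^-$ together with a path $\gamma$ from $x_+$ to $x_-$ along which $A\circ\gamma$ is non-trivial in $\pi_1(\Phi_0(\mH),GL(\mH))$; the generic transverse intersection of $A\circ\gamma$ with the singular stratum $\Sigma\subset\Phi_0(\mH)$ is the data needed to lift $\gamma$ to an arc in $[0,1]\times M$ together with a framing whose kernel along the arc equals its tangent. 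Iterating cancellation reduces $(n^+,n^-)$ to $(|\deg|,0)$; in the non-orientable case the equivalence class structure breaks down, cancellation of any pair is unobstructed, and we are reduced to $|X|\bmod 2$ points. A final isotopy brings the point positions into coincidence so the two standard forms become identical and are framed cobordant through the product cobordism $[0,1]\times X$.

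For the realization claim (a), I would take $k$ distinct points $x_1,\dots,x_k\in M$ in disjoint small balls and, in each ball, modify $\tilde A$ by interpolating a path in the path-connected space $\Phi_0(\mH)$ from $\tilde A(x_i)$ to $\id$ via a bump function, yielding $A\simeq\tilde A$ with $A(x_i)=\id$. For (b) I would instead place all $k$ points inside a single contractible ball $U$ and use the homotopy extension property to modify $\tilde A$ to a homotopic map $A$ that is constantly $\id$ on $\overline U$; this is unobstructed because the infinite-dimensional sphere $\partial V$ is contractible. Any path between two of the $x_i$ taken inside $U$ then maps to a constant path in $GL(\mH)$, so all $k$ points are equivalent and $|\deg|(X,A)=k$. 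The main technical difficulty throughout is the pair-cancellation move: producing simultaneously a smooth arc $\alpha$ in $[0,1]\times M$ and a global framing $B$ that restricts to the prescribed product form near $\{0,1\}\times M$, is homotopic rel $X_0\sqcup X_1$ to this product, and satisfies $\ker B(\alpha(s))=T_{\alpha(s)}\alpha$; this is the infinite-dimensional analogue of the classical Pontryagin birth-death move, and the control on $B$ away from $\alpha$ can be obtained from the proper extension Theorem~\ref{thm:totalextension} applied to a carefully prepared seed framing in a tubular neighborhood of $\alpha$.
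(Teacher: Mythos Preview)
Your overall strategy is sound and parallels the paper's: the determinant-bundle argument for necessity in the orientable case is correct and is equivalent to the paper's Lemmas~\ref{LL1} and~\ref{LL3}, and your realization arguments for (a) and (b) work (the paper's version is slightly slicker: perturb $\tilde A$ using the density of $GL(\mH)$ in $\Phi_0(\mH)$ to make $A^{-1}(GL(\mH))$ non-empty, then take $k$ points in one connected component of this open set).

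There is, however, a genuine gap in the sufficiency direction. When you reduce each $(X_j,A_j)$ to your ``standard form'' via the move ``homotopy of the framing rel $X$'', you implicitly assume that a homotopy $H:[0,1]\times M\to\Phi_0(\mH)$ from the current framing to the standard one can be chosen with $H(t,x)\in GL(\mH)$ for every $t$ and every $x\in X$. This is not automatic: the class of $t\mapsto H(t,x)$ in $\pi_1(\Phi_0(\mH),GL(\mH))\cong\mZ_2$ may be non-trivial. The paper resolves this (after cancellation and isotopy have made all points of $X$ pairwise equivalent) by observing that the triviality of $H(\cdot,x)$ is then independent of $x\in X$; if all are non-trivial one replaces $H$ by $TH$, where $T:[0,1]\to\Phi_0(\mH)$ is a non-contractible loop based at the identity, forcing all paths to become trivial so that Lemma~\ref{LL4} applies. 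Without this twist your three moves do not suffice to reach the standard form.

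A second, smaller issue: you invoke Theorem~\ref{thm:totalextension} to control the framing $B$ away from the cancellation arc. That theorem extends proper Fredholm \emph{maps}, not framings, and is not what is needed here. The paper's construction (Lemma~\ref{LL2}) builds $B$ directly: Lemma~\ref{parity} replaces $0\oplus(A\circ\gamma)$ by a fixed-endpoint homotopic path $\beta$ with $\dim\ker\beta\equiv 1$, the non-triviality of the line bundle $\ker\beta\to S^1$ (equivalent to the non-triviality of $A\circ\gamma$) allows one to conjugate so that $\ker\beta(t)$ coincides with the tangent line of the arc, and then $B$ is extended from the arc together with $\{0\}\times M$ and $[0,1]\times X'$ to all of $[0,1]\times M$ using a retraction, not a Fredholm extension theorem.
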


The proof of the above result is given at the end of this section. Now let $f:M \rightarrow \mH$ be a proper Fredholm map of index zero. 

\begin{definition}
The index zero Fredholm map $f: M \rightarrow \mH$ is said to be orientable if $df$ is orientable. Otherwise it is said to be non-orientable.
\end{definition}

If $f$ is orientable, we can define
\[
|\mathrm{deg}|(f):= |\mathrm{deg}|(f^{-1}(y),df) \in \mN_0,
\]
where $y$ is a regular value of $f$. This definition does not depend on the choice of the regular value $y$: indeed, the Pontryagin manifolds of $f$ corresponding to two regular values are framed cobordant (see Proposition \ref{prop:homotopycobordant}) and the claim follows from the above theorem. The parity of $|\mathrm{deg}|(f)$ is nothing else but the Smale mod 2 degree of $f$:
\[
|\mathrm{deg}|(f) \mod 2 = \deg_2(f).
\]
Together with Theorem \ref{thm:properframed}, the above theorem has the following consequence, which is Theorem \ref{main4} from the Introduction.

\begin{corollary}
\label{indexzeromaps}
Two proper Fredholm maps $f_0,f_1: M \rightarrow \mH$ of index 0 on the connected manifold $M$ are proper Fredholm homotopic if and only if the following conditions are satisfied:
\begin{enumerate}[(i)]
\item The maps $df_0,df_1: M \rightarrow \Phi_0(\mH)$ are homotopic.
\item If $f_0$ and $f_1$ are orientable, then $|\mathrm{deg}|(f_0)=|\mathrm{deg}|(f_1)$. If $f_0$ and $f_1$ are non-orientable, then $\deg_2(f_0)=\deg_2(f_1)$.
\end{enumerate}
Moreover, the map
\[
f \mapsto \left\{ \begin{array}{ll} \bigl([df], |\mathrm{deg}|(f)) & \mbox{if $f$ is orientable} \\ \bigl([df], \deg_2(f)) & \mbox{if $f$ is  non-orientable} \end{array} \right.
\]
induces a bijection
\[
\mathcal{F}_0^{\mathrm{prop}}[M,\mH] \cong \Bigl( [M,\Phi_0(\mH)]_{\mathrm{or}} \times \mN_0 \Bigr)\sqcup \Bigl( [M,\Phi_0(\mH)]_{\mathrm{no}} \times \mZ_2\Bigr).
\]
\end{corollary}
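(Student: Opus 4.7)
The plan is to deduce this corollary directly from two previous results: Theorem \ref{thm:properframed} translates proper Fredholm homotopy into framed cobordism of Pontryagin manifolds, and Theorem \ref{indexzero} classifies $0$-dimensional framed submanifolds up to framed cobordism in terms of the homotopy class of the framing and an integer (or mod-$2$) invariant. Since the invariants $|\mathrm{deg}|(f)$ and $\deg_2(f)$ were defined on the Pontryagin manifold side to match exactly the invariants appearing in Theorem \ref{indexzero}, the corollary should fall out by assembly.

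For the ``if and only if'' statement, pick regular values $y_0, y_1$ for $f_0, f_1$ and consider the Pontryagin manifolds $(f_i^{-1}(y_i), df_i)$. By Theorem \ref{thm:properframed}, $f_0$ and $f_1$ are proper Fredholm homotopic if and only if these framed submanifolds are framed cobordant. By Theorem \ref{indexzero}, the latter is equivalent to $df_0$ being homotopic to $df_1$ (condition (i)), together with equality of $|\mathrm{deg}|(f_i^{-1}(y_i), df_i)$ in the orientable case, or equality of $|f_i^{-1}(y_i)|$ modulo $2$ in the non-orientable case. Unwinding the definitions these are exactly conditions (i) and (ii) of the corollary; orientability of $f_i$ is defined as orientability of $df_i$ and so is automatically a homotopy invariant of $df_i$, ensuring the orientability types of $f_0$ and $f_1$ agree whenever (i) holds.

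For the bijection, the assignment $[f] \mapsto ([df], |\mathrm{deg}|(f))$ or $([df], \deg_2(f))$ is well defined because $[df]$ is invariant under Fredholm homotopy and the degrees $|\mathrm{deg}|(f)$ and $\deg_2(f)$ are independent of the chosen regular value by Proposition \ref{prop:homotopycobordant} together with Theorem \ref{indexzero}. Injectivity is exactly the ``only if'' direction of the first statement. For surjectivity in the orientable case, given $([A], k) \in [M, \Phi_0(\mH)]_{\mathrm{or}} \times \mN_0$, Theorem \ref{indexzero}(b) yields a framed submanifold $(X, A')$ of $M$ with $A'$ homotopic to $A$ and $|\mathrm{deg}|(X, A') = k$; Proposition \ref{prop:submanifoldmap} then produces a proper Fredholm map $f: M \to \mH$ whose Pontryagin manifold is framed cobordant to $(X, A')$ (see Remark \ref{persu}), so that $[df] = [A]$ and $|\mathrm{deg}|(f) = k$. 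The non-orientable case is identical, using Theorem \ref{indexzero}(a) with $k \in \{0, 1\}$.

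I do not foresee a genuine obstacle once Theorems \ref{thm:properframed} and \ref{indexzero} are in hand; the entire argument amounts to assembling definitions. The only mildly delicate step is verifying that the two components of the target space are genuinely disjoint, which follows because orientability of $f$ depends only on the homotopy class of $df$ and is therefore constant on each proper Fredholm homotopy class.
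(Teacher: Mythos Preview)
Your proposal is correct and follows exactly the approach the paper indicates: the corollary is stated there as an immediate consequence of Theorem~\ref{thm:properframed} combined with Theorem~\ref{indexzero}, with no separate proof given, and your argument spells out precisely this assembly. One minor slip: injectivity of the assignment $[f]\mapsto([df],\text{degree invariant})$ corresponds to the ``if'' direction of the first statement (equal invariants imply proper Fredholm homotopic), not the ``only if'' direction as you wrote; the ``only if'' direction is what gives well-definedness.
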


If $M$ is simply connected, or more generally if $\pi_1(M)$ has no non-trivial homomorphisms into $\mZ_2$, the bijection of the above theorem reduces to
\[
\mathcal{F}_0^{\mathrm{prop}}[M,\mH] \cong  [M,\Phi_0(\mH)] \times \mN_0, \qquad [f] \mapsto \bigl([df],|\mathrm{deg}|(f) \bigr).
\]

The remaining part of this section is devoted to the proof of Theorem \ref{indexzero}, for which we need some preliminary Lemmas.

\begin{lemma}
\label{parity}
Let $\alpha:[0,1]\rightarrow \Phi_0(\mH)$ be a continuous path with $\alpha(0)$ and $\alpha(1)$ in $GL(\mH)$. Then the path
\[
\tilde{\alpha}: [0,1] \rightarrow \Phi_1(\mR\times \mH,\mH), \qquad \tilde{\alpha} = 0 \oplus \alpha,
\]
is homotopic with fixed ends to a path
\[
\beta: [0,1] \rightarrow \Phi_1(\mR\times \mH,\mH)
\]
such that $\dim \ker \beta(t) =1$ for every $t\in [0,1]$. Notice that 
\[
\ker \beta(0) = \ker \tilde{\alpha}(0) = \mR \times \{0\} = \ker \tilde{\alpha}(1) = \ker \beta(1),
\]
so $\ker \beta$ defines a line bundle over $[0,1]/\{0,1\} \cong S^1$. If $\tilde{\alpha}$ and $\beta$ are homotopic paths of the form above then the following facts are equivalent:
\begin{enumerate}[(i)]
\item $[\alpha]$ is trivial in $\pi_1(\Phi_0(\mH),GL(\mH))$;
\item the line bundle $\ker \beta \rightarrow [0,1]/\{0,1\} \cong S^1$ is trivial.
\end{enumerate}
\end{lemma}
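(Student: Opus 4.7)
The plan is to prove the lemma in two stages. First, to construct $\beta$, I would stratify $\Phi_1(\mR\times\mH,\mH)$ by kernel dimension: let $\Sigma_k := \{T \in \Phi_1(\mR\times\mH,\mH) : \dim \ker T = k\}$. Since the Fredholm index is $+1$, $\dim\coker T = k-1$, and the standard Kuranishi-type local model makes $\Sigma_k$ a smooth Banach submanifold of codimension $k(k-1)$. Thus $\Sigma_1$ is open and dense while $\bigcup_{k\geq 2}\Sigma_k$ has codimension at least $2$. The endpoints $\tilde\alpha(0) = 0\oplus\alpha(0)$ and $\tilde\alpha(1) = 0\oplus\alpha(1)$ already lie in $\Sigma_1$ because their kernels are exactly $\mR\times\{0\}$. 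A standard transversality argument---a small generic perturbation of $\tilde\alpha$ within continuous paths agreeing with $\tilde\alpha$ at the endpoints---yields a homotopic path $\beta$ whose image misses $\bigcup_{k\geq 2}\Sigma_k$ and hence lies entirely in $\Sigma_1$.

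Second, to establish the equivalence, I would use the determinant line bundle $\det \to \Phi$ recalled in Remark \ref{rem:det}. The key observation is the canonical isomorphism $\Lambda^{\max}(\mR\oplus\ker T) \cong \mR\otimes\Lambda^{\max}(\ker T)$, which together with the identity $\coker(0\oplus T) = \coker T$ induces a natural isomorphism $\det(0\oplus T) \cong \det T$ reducing to the identity of $\mR$ when $T \in GL(\mH)$. Pulling back along $\alpha$ thus gives a canonical isomorphism $\tilde\alpha^*\det \cong \alpha^*\det$ of line bundles over $[0,1]$ compatible with the canonical trivializations at $t=0,1$. Since $\beta$ is homotopic to $\tilde\alpha$ rel endpoints, homotopy invariance of $\det$ gives $\beta^*\det \cong \tilde\alpha^*\det$. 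Because $\beta(t)$ is surjective with one-dimensional kernel, $(\beta^*\det)_t = \ker\beta(t)\otimes\mR^* \cong \ker\beta(t)$ canonically. Chaining these isomorphisms and descending to $S^1 = [0,1]/\{0,1\}$ yields $\ker\beta \cong \alpha^*\det$ as line bundles over $S^1$.

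The equivalence (i) $\Leftrightarrow$ (ii) then follows from the characterization recalled in the paragraph preceding the lemma statement: $[\alpha]$ is trivial in $\pi_1(\Phi_0(\mH), GL(\mH))$ if and only if the line bundle $\alpha^*\det$ is trivial over $S^1$. The main technical obstacle I anticipate is carefully bookkeeping the endpoint identifications, so that the chain of canonical isomorphisms $\ker\beta \cong \beta^*\det \cong \tilde\alpha^*\det \cong \alpha^*\det$ actually respects the gluings used in forming line bundles over $S^1$ (as opposed to yielding isomorphisms over $[0,1]$ that disagree by a sign on the glued fiber). This reduces to checking that each canonical map restricts to the identity of $\mR$ on the standard fibers at $t=0,1$, which is routine but requires attention to signs. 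Once this is verified, the lemma is complete.
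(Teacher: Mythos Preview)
Your proposal is correct and takes a genuinely different route from the paper. The paper proceeds by case analysis on the class of $\alpha$: it first uses the determinant bundle only to show that the triviality of $\ker\beta$ over $S^1$ is independent of the choice of $\beta$, and then treats the trivial and non-trivial classes in $\pi_1(\Phi_0(\mH),GL(\mH))$ separately. In each case it constructs $\beta$ by hand---a path in $GL(\mH)$ (extended by $0$) when $\alpha$ is trivial, and a half-rotation in a two-plane $\mR\times L$ when $\alpha$ is non-trivial---and reads off the (non-)triviality of $\ker\beta$ by inspection. Your approach is more uniform: transversality through the codimension-$k(k-1)$ stratification handles the existence of $\beta$ without splitting cases, and the chain $\ker\beta \cong \beta^*\det \cong \tilde\alpha^*\det \cong \alpha^*\det$, combined with the characterization of $[\alpha]$ via $\alpha^*\det$ recalled just before the lemma, yields the equivalence directly. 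Two points to make precise: first, smooth $\tilde\alpha$ (Lemma~\ref{smoothing}) before invoking transversality; second, the isomorphism $\det(0\oplus T)\cong\det T$ must be checked to vary continuously across jumps in $\dim\ker T$, i.e.\ to be an isomorphism of line bundles $\iota^*\det\cong\det$ over $\Phi_0(\mH)$ for the stabilization $\iota:T\mapsto 0\oplus T$, which is exactly the standard stabilization property of the determinant bundle. Your argument is cleaner conceptually; the paper's is more explicit and self-contained.
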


\begin{proof}
We first notice that if $\beta_0$ and $\beta_1$ are paths in $\Phi_1(\mR \times \mH,\mH)$ which are homotopic with fixed ends and satisfy
\[
\dim \ker \beta_0(t) = \dim \ker \beta_1(t) = 1 \;\; \forall t\in [0,1], \quad \ker \beta_0(0)=\ker \beta_0(1) = \ker \beta_1(0)=\ker \beta_1(1),
\]
then the line bundle $\ker \beta_0 \rightarrow [0,1]/\{0,1\}$ is trivial if and only if the line bundle $\ker \beta_1 \rightarrow [0,1]/\{0,1\}$ is trivial. Indeed, by considering the pull back of the determinant bundle $\det \rightarrow \Phi_1(\mR \times \mH,\mH)$ by the homotopy between $\beta_0$ and $\beta_1$ we see that the line bundle $\beta_0^*(\det)\rightarrow [0,1]/\{0,1\}$ is trivial if and only if the line bundle $\beta_1^*(\det)\rightarrow [0,1]/\{0,1\}$ is trivial. Since $\beta_0(t)$ and $\beta_1(t)$ are surjective for every $t\in [0,1]$, these line bundles are canonically identified with the line bundles $\Lambda^1(\ker \beta_0)\rightarrow [0,1]/\{0,1\}$ and $\Lambda^1(\ker \beta_1)\rightarrow [0,1]/\{0,1\}$, and the conclusion follows from the fact that a line bundle $L$ is trivial if and only if the associated line bundle $\Lambda^1(L)$ is trivial. Thanks to this observation, in the second part of the statement we can assume that $\beta$ is the path which is constructed in the first part.

We treat first the case in which $\alpha$ is trivial in $\pi_1(\Phi_0(\mH),GL(\mH))$. In this case, $\alpha$ is homotopic with fixed ends to a path $\gamma:[0,1] \rightarrow \Phi_0(\mH)$ which takes values in $GL(\mH)$. Then $\tilde{\alpha}$ is homotopic with fixed ends to the path $\beta:= 0\oplus \gamma$, which satisfies $\ker \beta(t) = \mR \times \{0\}$ for every $t\in [0,1]$. In this case, both (i) and (ii) hold.

Now we consider the case in which $\alpha$ is non-trivial in $\pi_1(\Phi_0(\mH),GL(\mH))$. We claim that $\alpha$ is homotopic with fixed ends to a smooth path $\gamma$ in $\Phi_0(\mH)$ of the following form: Let $\mH = L \oplus L^{\perp}$ be an orthogonal splitting of $\mH$ with $\dim L=1$. Then $\gamma(0)=\alpha(0)$, $\gamma(1)=\alpha(1)$, $\gamma(t)$ is invertible for all $t\in [0,1/4] \cup [3/4,1]$, and 
\[
\gamma\left( \frac{1}{2} + \frac{t}{4} \right) = \left( \sin \Bigl( \frac{\pi}{2} t \Bigr)  I_L \right) \oplus  I_{L^{\perp}} \qquad \forall t\in [-1,1].
\]
The existence of such a path follows from the fact that $GL(\mH)$ is connected. In order to show that $\gamma$ is homotopic with fixed ends to $\alpha$, it is enough to check that also $\gamma$ is non-trivial in $\pi_1(\Phi_0(\mH),GL(\mH))$, and this follows from the fact that the intersection number of $\gamma$ with $\Phi_0(\mH)\setminus GL(\mH)$ is 1: indeed, the smooth path $\gamma$ meets $\Phi_0(\mH)\setminus GL(\mH)$ transversally at $\Sigma$, and $\gamma^{-1}(\Sigma)=\{1/2\}$.

It follows that $\tilde{\alpha}$ is homotopic with fixed ends to $\tilde{\gamma}:=0 \oplus \gamma$. The path $\tilde{\gamma}: [0,1] \rightarrow \Phi_1(\mR\times \mH,\mH)$ is homotopic with fixed ends to the path $\beta: [0,1] \rightarrow \Phi_1(\mR\times \mH,\mH)$ which is defined by the identities
\[
\begin{split}
\beta(t) &= \tilde{\gamma}(t) \qquad \forall t\in [0,1/4]\cup [3/4,1], \\ \beta \left( \frac{1}{2} + \frac{t}{4} \right) &= \left( \cos \Bigl( \frac{\pi}{2} t \Bigr)  I_{\mR} \right) \oplus \left( \sin \Bigl( \frac{\pi}{2} t \Bigr)  I_L \right) \oplus  I_{L^{\perp}} \qquad \forall t\in [-1,1].
\end{split}
\]
The kernel of $\beta(t)$ is $\mR\times \{0\}$ for $t\in [0,1/4]\cup [3/4,1]$, while for $t\in [1/4,3/4]$ this kernel describes a path of lines in the plane $\mR \times L \times \{0\}$ which starts and ends at $\mR\times \{0\} \times \{0\}$ and makes a rotation of angle $\pi$. Therefore, the line bundle $\ker \beta \rightarrow [0,1]/\{0,1\}$ is non-trivial. In this case, neither (i) nor (ii) holds.  
\end{proof}

Here is a first consequence of the above Lemma: 

\begin{lemma}
\label{LL1}
Let $(W,B)$ be a 1-dimensional framed cobordism starting at some $0$-dimensional framed submanifold $(X,A)$ of $M$. Let $x_0$ and $x_1$ be distinct points in $X$ such that $(0,x_0)$ and $(0,x_1)$ belong to the same connected component of $W$, which is parametrized by an embedding 
\[
w=(\tau,\gamma): [0,1] \rightarrow [0,1]\times M
\]
such that $\gamma(0)=x_0$ and $\gamma(1)=x_1$. Then the path $A\circ \gamma$ is non-trivial in $\pi_1(\Phi_0(\mH),GL(\mH))$.
\end{lemma}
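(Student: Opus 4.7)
My plan is to set $\alpha:=A\circ\gamma:[0,1]\to \Phi_0(\mH)$, which has endpoints in $GL(\mH)$, and deduce non-triviality of $[\alpha]$ in $\pi_1(\Phi_0(\mH),GL(\mH))$ by invoking Lemma~\ref{parity}. The candidate comparison path is $\beta:=B\circ w:[0,1]\to \Phi_1(\mR\times\mH,\mH)$. By the framing condition, $\ker\beta(s)=T_{w(s)}W$ is one-dimensional for every $s$. Moreover, since $\tau(0)=\tau(1)=0$, the normal form of $B$ at the boundary gives $\beta(0)=0\oplus A(x_0)$ and $\beta(1)=0\oplus A(x_1)$, which are precisely the endpoints of $\tilde{\alpha}:=0\oplus\alpha$.

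The central step is to produce a homotopy rel endpoints between $\beta$ and $\tilde{\alpha}$ inside $\Phi_1(\mR\times\mH,\mH)$. I would use the explicit formula
\[
H(u,s) := B\bigl((1-u)\tau(s),\gamma(s)\bigr), \qquad (u,s)\in [0,1]\times[0,1],
\]
which satisfies $H(0,s)=\beta(s)$, $H(1,s)=B(0,\gamma(s))=0\oplus A(\gamma(s))=\tilde{\alpha}(s)$ (again by the normal form of $B$), and is constant in $u$ at $s=0,1$ because $\tau$ and $\gamma$ are. By Lemma~\ref{parity}, this reduces the claim to showing that the line bundle $\ker\beta\to [0,1]/\{0,1\}\cong S^1$, obtained by identifying the endpoint fibers $\mR\times\{0\}$ via the identity, is non-trivial.

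For the bundle computation I would use the nowhere-zero section $w'(s)=(\tau'(s),\gamma'(s))$ of $TW_0=\ker\beta$. Since $\gamma$ is constant near both endpoints (equal to $x_0$ on $[0,\epsilon)$ and to $x_1$ on $(1-\epsilon,1]$), we have $w'(0)=(\tau'(0),0)$ and $w'(1)=(\tau'(1),0)$, both in the fiber $\mR\times\{0\}$. The function $\tau$ vanishes only at the endpoints (any interior zero would produce an extra boundary point of $W_0$), hence $\tau'(0)>0>\tau'(1)$, and the endpoint values of $w'$ point in opposite directions. Any other nowhere-zero section is $f(s)w'(s)$ with $f$ of constant sign, so does not descend to $S^1$ under the identity identification; hence the bundle is non-trivial, which finishes the proof.

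The crux of the argument is the construction of $H$: since $\pi_1(\Phi_1(\mR\times\mH,\mH))\cong\mZ_2$, two paths with matching endpoints need not be homotopic in general, so one cannot directly apply Lemma~\ref{parity} to any chosen $\beta$. What makes the homotopy work is that $B$ is defined on the whole cylinder $[0,1]\times M$ with the correct normal form at $t=0$, so one can contract the $t$-coordinate to zero while remaining in $\Phi_1$.
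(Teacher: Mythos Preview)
Your proof is correct and follows essentially the same route as the paper: define $\beta=B\circ w$, build the homotopy to $\tilde\alpha=0\oplus(A\circ\gamma)$ by scaling the $t$-coordinate via $B\bigl(s\tau(\cdot),\gamma(\cdot)\bigr)$, and then read off the non-triviality of the kernel line bundle from the opposite signs of $\tau'(0)$ and $\tau'(1)$, invoking Lemma~\ref{parity}. The only point worth tightening is your deduction of $\tau'(0)>0>\tau'(1)$: positivity of $\tau$ on $(0,1)$ gives only $\tau'(0)\geq 0\geq\tau'(1)$, and you need the additional observation that $w'(0),w'(1)\neq 0$ (since $w$ is an embedding) together with $\gamma'(0)=\gamma'(1)=0$ to exclude equality.
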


\begin{proof}
Notice that the first component $\tau$ of the embedding $w$ must satisfy
\[
\tau(0)=\tau(1)=0 \qquad \mbox{and} \qquad \tau'(0)>0, \; \tau'(1)<0.
\]
The path 
\[
\beta: [0,1] \rightarrow \Phi_1(\mR\times \mH,\mH), \qquad \beta:= B\circ w,
\]
is such that $\dim \ker \beta(t)=1$ for every $t\in [0,1]$ and
\[
\beta(0) = 0 \oplus A(x_0), \qquad \beta(1) = 0 \oplus A(x_1).
\]
Let $\alpha:= A \circ \gamma : [0,1] \rightarrow \Phi_0(\mH)$ and set $\tilde{\alpha}:= 0 \oplus \alpha : [0,1] \rightarrow \Phi_1(\mR \times \mH,\mH)$. The paths $\tilde{\alpha}$ and $\beta$ are homotopic with fixed ends through the homotopy
\[
[0,1]\times [0,1] \rightarrow \Phi_1(\mR \times \mH,\mH), \qquad (s,t) \mapsto B\bigl(s\tau(t),\gamma(t)\bigr).
\]
The path $\dot{w}:[0,1] \rightarrow \mR \times \mH$ defines a non-vanishing section of the line bundle $\ker \beta \rightarrow [0,1]$ such that
\[
\begin{split}
\dot{w}(0) &= \big( \tau'(0),0) \quad \mbox{with } \tau'(0)>0, \\
\dot{w}(1) &= \big( \tau'(1),0) \quad \mbox{with } \tau'(1)<0.
\end{split}
\]
The fact that a non-vanishing section of $\ker \beta \rightarrow [0,1]$ takes values into two different components of 
\[
\ker \beta(0)\setminus \{0\} = (\mR\setminus \{0\}) \times \{0\} = \ker \beta(1)\setminus \{0\}
\]
at $t=0$ and $t=1$ implies that the line bundle $\ker \beta \rightarrow [0,1]/\{0,1\} \cong S^1$ is non-trivial. Then the last part of Lemma \ref{parity} implies that $[\alpha]=[A\circ \gamma]$ is non-trivial in $\pi_1(\Phi_1(\mH),GL(\mH))$. 
\end{proof}

This Lemma has the following converse:

\begin{lemma}
\label{LL2}
Let $(X,A)$ be a $0$-dimensional framed submanifold of $M$, let $x_0$ and $x_1$ be distinct points in $X$ and let $\gamma:[0,1] \rightarrow M$ be a continuous path from $x_0$ to $x_1$ such that $A\circ \gamma$ is non-trivial in $\pi_1(\Phi_0(\mH),GL(\mH))$. Then there is a $1$-dimensional framed cobordism $(W,B)$ from $(X,A)$ to a $0$-dimensional framed submanifold $(X',A')$ with $X'=X \setminus \{x_0,x_1\}$. Moreover, for every continuous path $\delta:[0,1] \rightarrow M$ connecting two points of $X'$, the path $A'\circ \delta$ is trivial in $\pi_1 (\Phi_0(\mH),GL(\mH))$ if and only if the path $A\circ \delta$ is.
\end{lemma}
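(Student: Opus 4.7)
The plan is to construct the framed cobordism $(W,B)$ explicitly with the choice $A':=A$; this choice immediately settles the ``moreover'' claim, since for any continuous path $\delta$ joining two points of $X'$ we have $A'\circ\delta=A\circ\delta$, so the triviality conditions in $\pi_1(\Phi_0(\mH),GL(\mH))$ coincide. It thus suffices to produce a $1$-dimensional framed cobordism from $(X,A)$ to $(X\setminus\{x_0,x_1\},A)$.

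For the submanifold $W$, I would first replace $\gamma$ by a smooth embedding $\gamma_0:[0,1]\to M$ from $x_0$ to $x_1$ that is disjoint from $X\setminus\{x_0,x_1\}$ and is constant equal to $x_0$ near $s=0$ and to $x_1$ near $s=1$; since rel-endpoints homotopies preserve the class in $\pi_1(\Phi_0(\mH),GL(\mH))$, $A\circ\gamma_0$ remains non-trivial. Choose a smooth bump $\tau:[0,1]\to[0,1)$ with $\tau(0)=\tau(1)=0$, $\tau>0$ on $(0,1)$, $\tau'(0)>0$ and $\tau'(1)<0$; then $w:=(\tau,\gamma_0)$ embeds $[0,1]$ into $[0,1]\times M$ as a U-shaped arc joining $(0,x_0)$ to $(0,x_1)$. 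Set
\[
W := w([0,1]) \cup \bigl([0,1]\times (X\setminus\{x_0,x_1\})\bigr),
\]
which is a smooth compact $1$-dimensional submanifold of $[0,1]\times M$ with boundary $\{0\}\times X \sqcup \{1\}\times(X\setminus\{x_0,x_1\})$ and the required product structure near $t=0$ and $t=1$.

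For the framing $B$, I would set $B(t,x):=0\oplus A(x)$ outside a small tubular neighborhood $N$ of $w([0,1])$; this satisfies the boundary conditions at $t=0,1$ and the kernel condition on the straight part $[0,1]\times(X\setminus\{x_0,x_1\})\subset W$, because $A(x)\in GL(\mH)$ on $X$ forces $\ker(0\oplus A(x))=\mR\times\{0\}$. Along the arc itself I would define $B$ via a smooth path $B_{\mathrm{arc}}:[0,1]\to\Phi_1(\mR\times\mH,\mH)$ with $\ker B_{\mathrm{arc}}(s)=\mR\,w'(s)$, equal to $0\oplus A(x_0)$ near $s=0$ and to $0\oplus A(x_1)$ near $s=1$. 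Such a path exists because, for each fixed $1$-dimensional $\ell\subset\mR\times\mH$, the set $\{T\in\Phi_1(\mR\times\mH,\mH):\ker T=\ell\}$ is homeomorphic to $GL(\mH)$ (restrict $T$ to any complement of $\ell$) and so contractible; hence the bundle of admissible operators over $s\in[0,1]$ has contractible fibers and admits sections with any prescribed endpoints.

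The hard part is extending $B$ across $N$ so that it matches both $B_{\mathrm{arc}}$ on the arc and $0\oplus A$ near $\partial N$, while staying inside $\Phi_1(\mR\times\mH,\mH)$. This amounts to showing that the paths $B_{\mathrm{arc}}$ and $s\mapsto 0\oplus A(\gamma_0(s))$, which share their endpoints in $\Phi_1(\mR\times\mH,\mH)$, are homotopic relative endpoints, and this is where the hypothesis on $A\circ\gamma$ is used. Indeed, by Lemma \ref{parity} applied to $\alpha:=A\circ\gamma_0$, the non-triviality of $[\alpha]$ in $\pi_1(\Phi_0(\mH),GL(\mH))$ is equivalent to the kernel line bundle over $S^1=[0,1]/\{0,1\}$ of any constant-$1$-dimensional-kernel path homotopic to $0\oplus\alpha$ being non-trivial. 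On the other hand $B_{\mathrm{arc}}$ has constant $1$-dimensional kernel $\mR\,w'$, whose bundle over $S^1$ is itself non-trivial, since $\tau'(0)>0$ and $\tau'(1)<0$ give opposite orientations to the generating section $w'$ at the identified endpoints. As both are the unique non-trivial line bundle on $S^1$, $B_{\mathrm{arc}}$ and $0\oplus(A\circ\gamma_0)$ lie in the same rel-endpoints homotopy class. Using such a homotopy together with a partition of unity subordinate to $N$, and applying Lemma \ref{smoothing} to smooth $B$ on $W$, I would glue the arc and outer definitions into a continuous framing $B:[0,1]\times M\to\Phi_1(\mR\times\mH,\mH)$, completing the construction of $(W,B)$.
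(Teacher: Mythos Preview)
Your overall strategy --- build the arc $w$, pick a path $B_{\mathrm{arc}}$ in $\Phi_1(\mR\times\mH,\mH)$ with the prescribed one-dimensional kernels, and use Lemma~\ref{parity} to match the non-triviality of the kernel line bundle of $B_{\mathrm{arc}}$ with that of $A\circ\gamma$ --- is exactly the paper's idea. The construction of $W$ is the same, and the reduction of the extension problem to a rel-endpoints homotopy in $\Phi_1$ is correct (once one notes, via the homotopy equivalence $A\mapsto 0\oplus A$, that $\pi_1(\Phi_1(\mR\times\mH,\mH))\cong\mZ_2$, so the kernel-bundle parity is a \emph{complete} invariant of the rel-endpoints class).

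There is, however, a genuine gap in your claim that the construction yields $A'=A$. A framing of a cobordism must satisfy $B(t,x)=0\oplus A(x)$ for \emph{all} $x\in M$ and all $t\in[0,\epsilon)$, not just outside $N$. But the arc $w([0,1])$ has its endpoints $(0,x_0),(0,x_1)$ in $\{0\}\times M$, so every open tubular neighborhood $N$ of the arc meets every slab $[0,\epsilon)\times M$. Inside $N\cap([0,\epsilon)\times M)$ your gluing is governed by the homotopy $H$, and near $s=0$ it produces $H(r,0)=0\oplus A(x_0)$, a constant in $x$, not $0\oplus A(x)$. A related mismatch occurs on $\partial N$: the radial endpoint of $H$ is $0\oplus A(\gamma_0(s))$, which does not agree with the ambient value $0\oplus A(\pi_M(p))$ for $p\in\partial N$, so the ``partition of unity'' patch is not yet a continuous map.

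The paper sidesteps both issues by a different extension device. Instead of a tubular neighborhood it uses the singular $2$-cell
\[
Z=\bigl\{\bigl(s\tau(t),\gamma(t)\bigr):(s,t)\in[0,1]^2\bigr\},
\]
bounded by the arc and a piece of $\{0\}\times M$, writes the homotopy $\eta$ on $Z$, and extends $B$ to all of $[0,1]\times M$ via a retraction onto $(\{0\}\times M)\cup Z\cup([0,1]\times X')$. The cost is that near $t=1$ the resulting $B$ is not of the form $0\oplus(\cdot)$; one then modifies $B$ on $(3/4,1]\times M$ using the homotopy equivalence $A\mapsto 0\oplus A$, obtaining an $A'$ with $A'|_{X'}=A|_{X'}$ but not $A'=A$. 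The ``moreover'' statement is proved separately by observing that $(s,t)\mapsto B(s,\delta(t))|_{\{0\}\times\mH}$ is a fixed-endpoints homotopy between $A\circ\delta$ and $A'\circ\delta$. Your shortcut $A'=A$ is in fact attainable a posteriori (apply Lemma~\ref{LL4} to pass from $(X',A')$ to $(X',A)$, since the induced homotopy is constant along $X'$), but your direct tubular-neighborhood gluing does not establish it as written.
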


\begin{proof}
By replacing it with a fixed ends homotopic path, we can assume that the path $\gamma:[0,1] \rightarrow M$ is smooth and has the following properties: $\gamma(t)=x_0$ for $t\in [0,1/3]$, $\gamma(t)=x_1$ for $t\in [2/3,1]$ and $\gamma|_{(1/3,2/3)}$ is an embedding into $M \setminus X$. Let $\tau: [0,1] \rightarrow [0,1/2]$ be a smooth function such that $\tau(t)=t$ for $t\in [0,1/3]$ and $\tau(t)=1-t$ for $t\in [2/3,1]$ 
and $\tau(t)>0$ for $t\in(0,1)$. Then the path
\[
w:[0,1] \rightarrow [0,1]\times M, \qquad w:=(\tau,\gamma),
\]
is an embedding, and the 1-dimensional manifold
\[
W:= w([0,1]) \cup \bigcup_{x\in X'} [0,1]\times \{x\}
\]
is a cobordism between $X$ and $X'=X\setminus \{x_0,x_1\}$. 

Let $Z\subset [0,1]\times M$ be the image of the smooth map
\[
z: [0,1] \times [0,1] \rightarrow [0,1]\times M, \qquad (s,t) \mapsto \bigl( s \tau(t), \gamma(t) \bigr),
\]
which maps $[0,1]\times \{0\}$ into $(0,x_0)$ and $[0,1]\times \{1\}$ into $(0,x_1)$. The map $z$ induces a homeomorphism from the quotient space, which is obtained from $[0,1]\times [0,1]$ by collapsing $[0,1]\times \{0\}$ and $[0,1]\times \{1\}$, onto $Z$, which is therefore a topological disc.

Set $\alpha:= A \circ \gamma$ and $\tilde{\alpha}:= 0 \oplus \alpha:[0,1] \rightarrow \Phi_1(\mR \times \mH,\mH)$. By the first part of Lemma \ref{parity}, $\tilde{\alpha}$ is homotopic with fixed ends to a path
\[
\widehat{\beta}:[0,1] \rightarrow  \Phi_1(\mR \times \mH,\mH)
\]
such that $\dim \ker \widehat{\beta}(t)=1$ for every $t\in [0,1]$. Due to the properties of $\tilde{\alpha}$, we may assume that
\[
\widehat{\beta}(t)= \tilde{\alpha}(t) \qquad \forall t\in [0,1/3]\cup [2/3,1].
\]
Since $[\alpha]=[A\circ \gamma]$ is non-trivial in  $\pi_1(\Phi_1(\mH),GL(\mH))$, the second part of Lemma \ref{parity} implies that the line bundle $\ker \widehat{\beta} \rightarrow [0,1]/\{0,1\}\cong S^1$ is non-trivial. Also the path
\[
t\mapsto \dot{w}(t) \mR 
\]
into the Grassmannian of lines in $\mR \times \mH$ defines a non-trivial line bundle over $[0,1]/\{0,1\}\cong S^1$, which coincides with the previous one on $[0,1/3]$ and on $[2/3,1]$. Then we can find a smooth path
\[
\Psi: [0,1] \rightarrow GL(\mR\times \mH)
\]
such that $\Psi(t)=I$ for $t\in [0,1/3]\cup [2/3,1]$ and $\Psi(t)\dot{w}(t)\in \ker \widehat{\beta}(t)$ for every $t\in [0,1]$. Since $GL(\mR\times \mH)$ is contractible, the pointwise multiplication
\[
\beta:[0,1] \rightarrow \Phi_1(\mR \times \mH,\mH), \qquad \beta(t):= \widehat{\beta}(t) \Psi(t),
\]
is still homotopic with fixed ends to $\tilde{\alpha}$. Let
\[
\eta: [0,1]\times [0,1] \rightarrow \Phi_1(\mR\times \mH,\mH), \qquad \eta(0,t) = \tilde{\alpha}(t), \quad \eta(1,t) = \beta(t),
\]
be such a homotopy, which we may assume to satisfy
\[
\eta(s,t) = \tilde{\alpha}(t) \qquad \forall (s,t) \in [0,1] \times \bigl( [0,1/3] \cup [2/3,1]\bigr).
\]
By construction,
\[
\ker \beta(t) = \mR \dot{w}(t) = T_{w(t)} W
\]
for every $t\in [0,1]$. Using the fact that 
\[
\bigl( \{0\} \times M \bigr) \cup Z \cup \bigl( [0,1] \times X' \bigr)
\]
is a retract of $[0,1]\times M$ and the properties of the map $z$, we can find a continuous map
\[
B: [0,1] \times M \rightarrow \Phi_1(\mR\times \mH,\mH)
\]
such that
\[
\begin{split}
B(s,x) &= 0 \oplus A(x) \qquad \forall (s,x)\in [0,1/4] \times M, \\
B(z(s,t)) &= \eta(s,t) \qquad \forall (s,t)\in [0,1]\times [0,1], \\
B(s,x) &= 0 \oplus A(x) \qquad \forall (s,x)\in [0,1] \times X'.
\end{split}
\]
Since the map
\[
\Phi_0(\mH) \rightarrow \Phi_1(\mR\times \mH,\mH), \qquad A \mapsto 0\oplus A,
\]
is a homotopy equivalence, by modifying $B$ in the open set $(3/4,1]\times M$, whose intersection with $W$ is 
\[
(3/4,1] \times X',
\]
we may assume that
\[
B(s,x) = 0 \oplus A'(x) \qquad \forall (s,x) \in [4/5,1]\times M,
\]
where $A'(x)=A(x)$ for all $x\in X'$. From the identities
\[
\begin{split}
\ker B(w(t)) &= \ker B(z(1,t)) = \ker \eta(1,t) = \ker \beta(t) = T_{w(t)} W \qquad \forall t\in [0,1], \\
\ker B(t,x) &= \ker 0 \oplus A(x) = \mR \times \{0\} = T_{(t,x)} W \qquad \forall (t,x) \in [0,1] \times X',
\end{split}
\]
we then deduce that $(W,B)$ is the required framed cobordism from $(X,A)$ to $(X',A')$.

In order to prove the last statement, we consider a continuous path $\delta:[0,1] \rightarrow M$ joining two points of $X'$. Since $B(s,x) = 0 \oplus A(x)$ for all $(s,x)\in [0,1] \times X'$, the map
\[
H: [0,1] \times [0,1] \rightarrow \Phi_0(\mH), \qquad H(s,t) = B(s,\delta(t))|_{\{0\} \times \mH},
\]
is a homotopy with fixed ends between $A\circ \delta$ and $A'\circ \delta$. The existence of this homotopy implies that $A'\circ \delta$ is trivial in $\pi_1(\Phi_0(\mH),GL(\mH))$ if and only if $A\circ \delta$ is. 
\end{proof}

Another consequence of Lemma \ref{parity} is the following:

\begin{lemma}
\label{LL3}
Let $(W,B)$ be a 1-dimensional framed cobordism between the 0-dimensional framed submanifolds $(X_0,A_0)$ and $(X_1,A_1)$. Let $x_0\in X_0$ and $x_1\in X_1$ be such that $(0,x_0)$ and $(1,x_1)$ belong to the same connected component of $W$, which is parametrized by the embedding
\[
w: [0,1] \rightarrow [0,1]\times M, \qquad w(0)=(0,x_0), \; w(1)=(1,x_1).
\]
Consider the homotopy 
\[
A : [0,1] \times M \rightarrow \Phi_0(\mH), \qquad A(t,x):= B(t,x)|_{\{0\} \times \mH} \quad \forall (t,x)\in [0,1]\times M.
\]
Then the path $A\circ w$ is trivial in $\pi_1(\Phi_0(\mH),GL(\mH))$.  
\end{lemma}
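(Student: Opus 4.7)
The plan is to run a kernel line bundle analysis analogous to Lemma \ref{LL1}, but with the crucial sign difference that $w$ enters $W$ at $t=0$ and \emph{exits} at $t=1$, so the tangent vector $\dot w$ points in the same direction in the common fiber at both ends, yielding a trivial (rather than non-trivial) line bundle over $S^1$.

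First I would set $\alpha := A \circ w : [0,1]\rightarrow \Phi_0(\mH)$ and
\[
\beta := B \circ w : [0,1] \rightarrow \Phi_{1}(\mR \times \mH, \mH).
\]
The framing condition $\ker B(t,x) = T_{(t,x)} W$ for $(t,x)\in W$ gives $\ker \beta(t) = T_{w(t)} W = \mR \dot w(t)$ for all $t$, and the collar condition on $B$ forces $\beta(0) = 0 \oplus A_0(x_0)$ and $\beta(1) = 0 \oplus A_1(x_1)$, with both $A_0(x_0)$ and $A_1(x_1)$ invertible since $x_0\in X_0$, $x_1\in X_1$; in particular $\ker \beta(0) = \ker \beta(1) = \mR \times \{0\}$, so $\ker \beta$ descends to a line bundle over $[0,1]/\{0,1\}\cong S^1$.

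Next, I would verify that $\beta$ is homotopic with fixed endpoints to $\tilde \alpha := 0 \oplus \alpha$ via
\[
H: [0,1]\times[0,1] \rightarrow \Phi_1(\mR \times \mH,\mH), \qquad H(s,t)(a,u) := B(w(t))(sa,u).
\]
For $s>0$ one has $H(s,t) = \beta(t) \circ (sI_\mR \oplus I_\mH)$, which is Fredholm of index $1$; at $s=0$ one recovers $H(0,t) = 0 \oplus \alpha(t) = \tilde \alpha(t)$, which lies in $\Phi_1(\mR\times\mH,\mH)$ because $\alpha(t) \in \Phi_0(\mH)$ by assumption. At $t=0$ the collar condition gives $H(s,0)(a,u) = B(0,x_0)(sa,u) = A_0(x_0)u$, independent of $s$, and similarly at $t=1$, so the homotopy is relative the endpoints $\{0,1\}$.

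Then I would identify the isomorphism type of the line bundle $\ker \beta \rightarrow S^1$. Since $X_0$ and $X_1$ are discrete and $W$ agrees with $[0,\epsilon)\times X_0$ near $t=0$ and with $(1-\epsilon,1]\times X_1$ near $t=1$, the $M$-component $\gamma$ of $w = (\tau,\gamma)$ is locally constant equal to $x_0$ near $0$ and to $x_1$ near $1$; hence $\dot w(0) = (\tau'(0),0)$ and $\dot w(1) = (\tau'(1),0)$. The embedding $w$ enters $W$ at $t=0$ and exits at $t=1$, forcing $\tau'(0)>0$ and $\tau'(1)>0$. Therefore $\dot w$ is a non-vanishing section of $\ker \beta$ taking values in the \emph{same} open half-ray of $\ker\beta(0) = \ker\beta(1) = \mR\times\{0\}$ at both endpoints, so it descends to a non-vanishing section over $S^1$ and the line bundle $\ker \beta \rightarrow S^1$ is trivial.

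Finally, by Lemma \ref{parity} applied to $\alpha$ (with $\beta$ as the required homotopic representative), the triviality of $\ker \beta \rightarrow S^1$ is equivalent to the triviality of $[\alpha] = [A\circ w]$ in $\pi_1(\Phi_0(\mH),GL(\mH))$, which is exactly the conclusion. The only real obstacle is bookkeeping the signs of $\tau'(0)$ and $\tau'(1)$: the contrast with Lemma \ref{LL1}, in which both endpoints of the component sat in $\{0\}\times M$ and forced $\tau'(0)>0$, $\tau'(1)<0$, is precisely what flips the outcome from non-trivial to trivial here.
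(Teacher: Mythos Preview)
Your proof is correct and follows essentially the same approach as the paper: both identify $\beta = B\circ w$ as a path of surjective index-$1$ operators with one-dimensional kernel spanned by $\dot w$, observe that $\dot w(0)$ and $\dot w(1)$ lie in the same half-line of $\mR\times\{0\}$ (because $\tau'(0)>0$ and $\tau'(1)>0$), conclude that $\ker\beta \to [0,1]/\{0,1\}$ is trivial, and invoke Lemma~\ref{parity}. Your explicit homotopy $H(s,t)(a,u)=B(w(t))(sa,u)$ between $\beta$ and $\tilde\alpha=0\oplus\alpha$ is a clean way to justify what the paper compresses into the phrase ``due to the form of $B$''.
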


\begin{proof}
Due to the form of $B$, $B\circ w$ is homotopic with fixed ends to $0 \oplus A\circ w$ within $\Phi_1(\mR\times \mH,\mH)$. The kernel of $B\circ w(t)$ has dimension 1 for every $t\in [0,1]$ and is spanned by $\dot{w}(t)$. Since $\dot{w}(0)$ and $\dot{w}(1)$ belong to the same component of 
\[
\ker B(w(0)) \setminus \{0\} = \bigl(\mR \setminus \{0\} \bigr) \times \{0\} =  \ker B(w(1)) \setminus \{0\},
\]
the line bundle $\ker B\circ w \rightarrow [0,1]/\{0,1\}$ is trivial. Then the second part of Lemma \ref{parity} implies that $A\circ w$ is trivial in $\pi_1(\Phi_0(\mH),GL(\mH))$.
\end{proof}

And here is a converse statement to the above lemma:

\begin{lemma}
\label{LL4}
Let $A_0,A_1: M \rightarrow \Phi_0(\mH)$ be framings of the same finite set $X\subset M$. Let $A:[0,1] \times M \rightarrow \Phi_0(\mH)$ be a homotopy from $A_0$ to $A_1$ such that for every $x\in X$ the path $t\mapsto A(t,x)$ is trivial in $\pi_1(\Phi_0(\mH),GL(\mH))$. Then $(X,A_0)$ is framed cobordant to $(X,A_1)$.
\end{lemma}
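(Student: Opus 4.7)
The plan is to take $W := [0,1] \times X$ as the underlying cobordism — a disjoint union of arcs with $\partial W = \{0,1\} \times X$ — and to build a framing of the form $B(t,y)(s,u) = C(t,y)\, u$, where $C : [0,1] \times M \to \Phi_0(\mH)$ is a homotopy from $A_0$ to $A_1$ satisfying the additional condition that $C(t,x) \in GL(\mH)$ for every $(t,x) \in [0,1] \times X$, and which is constant in $t$ near $t=0$ and $t=1$. Granted such a $C$, checking that $(W,B)$ is a framed cobordism is routine: on $W$ one has $\ker B(t,x) = \mR \times \ker C(t,x) = \mR \times \{0\} = T_{(t,x)} W$, the boundary identities $B = 0 \oplus A_0$ near $t=0$ and $B = 0 \oplus A_1$ near $t=1$ follow from the constancy of $C$, and smoothness of $B|_W$ reduces to smoothness of each path $t \mapsto C(t,x)$.

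For the construction of $C$, the key inputs are, first, a reparametrization of $A$ so that $A(t,\cdot) = A_0$ for $t \in [0,\epsilon]$ and $A(t,\cdot) = A_1$ for $t \in [1-\epsilon,1]$, and second, for each $x \in X$, a choice of smooth path $\tilde\gamma_x : [0,1] \to GL(\mH)$ from $A_0(x)$ to $A_1(x)$ together with a smooth relative homotopy $H_x : [0,1]^2 \to \Phi_0(\mH)$ between $A(\cdot,x)$ and $\tilde\gamma_x$ — both furnished by the hypothesis that $[A(\cdot,x)]$ is trivial in $\pi_1(\Phi_0(\mH), GL(\mH))$ — again reparametrized so that $H_x(s,t) = A_0(x)$ for $t \in [0,\epsilon]$ and $H_x(s,t) = A_1(x)$ for $t \in [1-\epsilon,1]$, uniformly in $s$.

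The desired $C$ is then obtained by interpolating near each $\{x\} \times [0,1]$ with $H_x$, and equaling $A$ outside. Choose disjoint open neighborhoods $U_x$ of each $x \in X$ small enough that $A(t,y)$ lies within a uniform distance $\delta$ of $A(t,x)$ for all $(t,y) \in [0,1] \times U_x$, with $\delta$ smaller than the distance in $L(\mH)$ from the compact set $H_x([0,1]^2)$ to the complement of $\Phi_0(\mH)$ — possible since $\Phi_0(\mH)$ is open in $L(\mH)$. Pick a continuous $r_x : M \to [0,1]$ with $r_x(x)=0$ and $r_x \equiv 1$ outside $U_x$, and a smooth $\psi : [0,1] \to [0,1]$ with $\psi = 0$ on $[0,\epsilon/2] \cup [1-\epsilon/2,1]$ and $\psi = 1$ on $[\epsilon,1-\epsilon]$. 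Set $C(t,y) := A(t,y)$ for $y \notin \bigsqcup_x U_x$, and for $y \in U_x$
\[
C(t,y) := H_x\bigl((1-r_x(y))\,\psi(t),\, t\bigr) + \bigl(1 - (1-r_x(y))\,\psi(t)\bigr)\bigl(A(t,y) - A(t,x)\bigr).
\]
A direct inspection confirms continuity across $\partial U_x$ (both formulas yield $A(t,y)$ when $r_x = 1$), membership in $\Phi_0(\mH)$ via the choice of $\delta$, constancy $C(t,\cdot) = A_0, A_1$ near $t=0,1$ (the factor $\psi(t)$ vanishes, so the formula collapses to $A(t,y)$), and the crucial identity $C(t,x) = H_x(\psi(t),t)$ for $x \in X$, which lies in $GL(\mH)$ throughout $[0,1]$: on $[\epsilon,1-\epsilon]$ it equals $H_x(1,t) = \tilde\gamma_x(t) \in GL(\mH)$, while elsewhere $H_x(\cdot,t)$ is the constant invertible operator $A_0(x)$ or $A_1(x)$.

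The main obstacle is arranging $C(t,x) \in GL(\mH)$ for every $t \in [0,1]$, not merely in $\Phi_0(\mH)$: a naive convex combination of $H_x$ with $A$ would force $C(t,x)$ to pass through the non-invertible operators of the original path $A(\cdot,x)$. The time cutoff $\psi$ is engineered so that the full homotopy parameter $s = 1$ of $H_x$ is activated precisely where $H_x(1,\cdot) = \tilde\gamma_x$ lands in $GL(\mH)$, and the transition from $\psi = 0$ to $\psi = 1$ takes place entirely inside the strips where $H_x(\cdot,t)$ is already the constant invertible value. A conceptually cleaner alternative bypasses the explicit formula by invoking the homotopy extension property of the cofibration $(\{0,1\} \times M) \cup ([0,1] \times X) \hookrightarrow [0,1] \times M$ with target $\Phi_0(\mH)$, using the homotopies $H_x$ on $[0,1] \times X$, followed by a final $t$-reparametrization to secure the boundary constancy.
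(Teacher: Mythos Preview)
Your proof is correct and follows essentially the same approach as the paper: take the trivial cobordism $W=[0,1]\times X$, modify the homotopy $A$ in a neighborhood of $[0,1]\times X$ so that it takes values in $GL(\mH)$ along each arc $[0,1]\times\{x\}$, and set $B=0\oplus(\text{modified }A)$. The paper simply asserts that such a modification is possible inside $[1/3,2/3]\times\bigcup_{x\in X}\overline B_r(x)$ using the triviality hypothesis, whereas you write down an explicit interpolation formula (and note the HEP alternative); the content is the same.
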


\begin{proof}
By a reparametrization, we may assume that $A(t,\cdot)=A_0$ for every $t\in [0,1/3]$ and $A(t,\cdot)=A_1$ for every $t\in [2/3,1]$. Let $r>0$ be so small that the closed balls $\overline{B}_r(x)$ for $x\in X$ are pairwise disjoint. Since for every $x\in X$ the path 
\[
[1/3,2/3] \rightarrow \Phi_0(\mH), \qquad t \mapsto A(t,x)
\]
is homotopic with fixed ends to a path which is fully contained in $GL(\mH)$, we can modify the homotopy $A$ inside $[1/3,2/3]\times \bigcup_{x\in X} \overline{B}_r(x)$ so that $A(t,x)\in GL(\mH)$ for every $x\in X$ and every $t\in [0,1]$. After this modification, the pair $(W,B)$ with $W:= [0,1] \times X$ and $B:= 0 \oplus A :[0,1] \times M \rightarrow \Phi_1(\mR \times \mH,\mH)$ is a framed cobordism from $(X,A_0)$ to $(X,A_1)$.
\end{proof}

We can finally prove Theorem \ref{indexzero}

\begin{proof}[Proof of Theorem \ref{indexzero}] Let $(W,B)$ be a framed cobordism between the 0-dimensional framed manifolds $(X_0,A_0)$ and $(X_1,A_1)$. As we have already observed, $B$ induces a homotopy 
\[
A:[0,1] \times M \rightarrow \Phi_0(\mH), \qquad A(t,x) := B(t,x)|_{\{0\} \times \mH} \qquad \forall (t,x)\in [0,1] \times M,
\]
between $A_0$ and $A_1$, so (i) holds. We must prove that also (ii) holds.

If $A_0$ and $A_1$ are non-orientable, then (ii) just says that $|X_0|=|X_1|$ mod 2, and this is surely true because the finite sets $X_0$ and $X_1$ are cobordant. Therefore, we can assume that $A_0$ and $A_1$ are orientable. In this case, $A_0$ and $A_1$ induce equivalence relations on $X_0$ and $X_1$ which determine the non-negative integers $|\mathrm{deg}|(X_0,A_0)$ and $|\mathrm{deg}|(X_1,A_1)|$, and we must show that these integers coincide.

Let $W'$ be the subset of $W$ consisting of those components having one boundary element in $\{0\}\times M$ and the other in $\{1\}\times M$. The set $W'$ intersects $\{0\} \times M$ in the subset $X_0'\subset X_0$ and in the equipotent subset $X_1'\subset X_1$. The set $X_0\setminus X_0'$ consists of a disjoint union of pairs $x_0,x_0'$ such that $(0,x_0)$ and $(0,x_0')$ belong to the same component of $W$. The elements of such a pair are not equivalent under the equivalence relation induced by $A_0$, thanks to Lemma \ref{LL1}, and hence
\[
|\mathrm{deg}|(X_0,A_0) = |\mathrm{deg}|(X_0',A_0).
\]
It is easy to see that $(X_0,A_0)$ and $(X_0',A_0)$ are framed cobordant. Similarly, 
\[
|\mathrm{deg}|(X_1,A_1) = |\mathrm{deg}|(X_1',A_1)
\]
and $(X_1,A_1)$ is framed cobordant to $(X_1',A_1)$. By replacing $X_0$ by $X_0'$ and $X_1$ by $X_1'$, we can therefore assume that all components of $W$ are arcs with one end-point in $\{0\}\times X_0$ and the other in $\{1\}\times X_1$. Under this assumption, $W$ induces a bijection
\[
\varphi: X_0 \rightarrow X_1
\]
where $\varphi(x_0)$ is the point of $X_1$ such that $(0,x_0)$ and $(1,\varphi(x_0))$ are end-points of a component of $W$. In order to prove that $|\mathrm{deg}|(X_0,A_0) = |\mathrm{deg}|(X_1,A_1)$, it is enough to show that the bijection $\varphi$ preserves the two equivalence relations. In other words, we must prove the following fact: if $x_0,x_0'$ are distinct points in $X_0$ then $x_0$ is equivalent to $x_0'$ with respect to $A_0$ if and only if $x_1:=\varphi(x_0)$ is equivalent to $x_1':=\varphi(x_0')$ with respect to $A_1$. The connected component of $W$ having boundary $(0,x_0)$ and $(1,x_1)$ is parametrized by an embedding $w:[0,1] \rightarrow [0,1]\times M$ such that $w(0)=(0,x_0)$ and $w(1)=(1,x_1)$. Similarly, the connected component of $W$ having boundary $(0,x_0')$ and $(1,x_1')$ is parametrized by an embedding $w':[0,1] \rightarrow [0,1]\times M$ such that $w'(0)=(0,x_0')$ and $w'(1)=(1,x_1')$. Let $\alpha_0: [0,1] \rightarrow M$ and $\alpha_1:[0,1] \rightarrow M$ be continuous paths such that $\alpha_0(0)=x_0$, $\alpha_0(1)=x_0'$, $\alpha_1(0)=x_1$ and $\alpha_1(1)=x_1'$.  The homotopy $A$ induces a homotopy between the juxtapositions
\[
(A_0\circ \alpha_0) \# (A\circ w')
\]
and
\[
(A\circ w) \# (A_1\circ \alpha_1).
\]
By Lemma \ref{LL3}, the paths $A\circ w$ and $A \circ w'$ are trivial in $\pi_1(\Phi_0(\mH),GL(\mH))$. We conclude that the paths $A_0\circ \alpha_0$ and $A_1\circ \alpha_1$ are either both trivial or both non-trivial in $\pi_1(\Phi_0(\mH),GL(\mH))$, and hence $x_0$ is equivalent to $x_0'$ with respect to $A_0$ if and only if $x_1$ is equivalent to $x_1'$ with respect to $A_1$. This proves that (ii) holds.

\medskip

Now we assume that the framed $0$-dimensional manifolds $(X_0,A_0)$ and $(X_1,A_1)$ satisfy (i) and (ii). We wish to prove that they are framed cobordant. 

Since $A_0$ and $A_1$ are homotopic by (i), they are either both orientable or both non-orientable. 
We first consider the case in which they are orientable.
If $X_0$ contains two points $x_0$ and $x_0'$ which are not equivalent for the equivalence relation induced by $A_0$, then Lemma \ref{LL2} implies that $(X_0,A_0)$ is framed cobordant to $(X_0\setminus \{x_0,x_0'\},A_0')$ for some framing $A_0'$ which induces the same equivalence relation as $A_0$ on $X_0\setminus \{x_0,x_0'\}$. By the latter fact, removing $x_0$ and $x_0'$ does not change the value of $|\mathrm{deg}|$:
\[
|\mathrm{deg}| \bigl(X \setminus \{x_0,x_0'\}, A_0'\bigr) = |\mathrm{deg}|(X_0,A_0).
\]
By successively removing pairs of non-equivalent points from $X_0$, we can therefore assume that all the points of $X_0$ are equivalent with respect to the equivalence relation induced by $A_0$. Similarly, we can assume that all the points of $X_1$ are equivalent with respect to the equivalence relation induced by $A_1$. In this case, we have $|\mathrm{deg}|(X_0,A_0) = |X_0|$ and $|\mathrm{deg}|(X_1,A_1)=|X_1|$, so assumption (ii) guarantees that $X_0$ and $X_1$ have the same number of elements. By considering an isotopy 
\[
\phi: [0,1] \times M \rightarrow M
\]
such that $\phi(0,\cdot) = \mathrm{id}$ and $\phi(1,X_0)=X_1$, we can assume that $X_0$ and $X_1$ are indeed the same set, which we denote by $X$. Let $A:[0,1] \times M \rightarrow \Phi_0(\mH)$ be a homotopy between $A_0$ and $A_1$, whose existence is guaranteed by assumption (i). If $x$ and $x'$ are distinct elements of $X$ and $\gamma:[0,1]\rightarrow M$ is a path such that $\gamma(0)=x$ and $\gamma(1)=x'$, then the juxtapositions 
\[
A(\cdot,x) \# A_1\circ \gamma
\]
and
\[
A_0 \circ \gamma \# A(\cdot,x')
\]
are homotopic. Since the paths $A_1\circ \gamma$ and $A_0\circ \gamma$ are trivial in $\pi_1(\Phi_0(\mH),GL(\mH))$, the path $A(\cdot,x)$ is trivial if and only if the path $A(\cdot,x')$ is trivial. If $A(\cdot,x)$ is trivial for one $x\in X$, and hence for all $x\in X$, then Lemma \ref{LL4} implies that $(X,A_0)$ and $(X,A_1)$ are framed cobordant, as we wished to prove. If $A(\cdot,x)$ is non-trivial for one $x\in X$, and hence for all $x\in X$, then we can modify the homotopy $A$ as follows. We fix a path $T:[0,1] \rightarrow \Phi_0(\mH)$ such that $T(0)=T(1)=I$ and $T$ is non-trivial in $\pi_1(\Phi_0(\mH))$ and we define
\[
\tilde{A}(t,x) := T(t) A(t,x) \qquad \forall (t,x)\in [0,1]\times M.
\]
This map is a homotopy between $A_0$ and $A_1$ and $\tilde{A}(\cdot,x)$ is trivial in $\pi_1(\Phi_0(\mH),GL(\mH))$ for all $x\in X$, so the previous argument applies and $(X,A_0)$ and $(X,A_1)$ are framed cobordant. This concludes the case in which $A_0$ and $A_1$ are orientable.

Now assume that $A_0$ and $A_1$ are non-orientable. In this case, any pair of distinct points $x_0,x_1$ in $X_0$ can be connected by a path $\gamma:[0,1] \rightarrow M$ such that $A\circ \gamma$ is non-trivial in $\pi_1(\Phi_0(\mH),GL(\mH))$. Indeed, if some path $\alpha$ from $x_0$ to $x_1$ is such that $A\circ \alpha$ is trivial, then by juxtaposing $\alpha$ with a loop $\beta$ based at $x_1$ such that $A\circ \beta$ is non-trivial - which exists because $A$ is non-orientable, we obtain the required path. Thanks to Lemma \ref{LL1}, $(X_0,A_0)$ is framed cobordant to a $0$-dimensional framed submanifold $(X_0',A_0')$ such that $X_0$ is either empty or consists of one point and $|X_0|=|X_0'|$ mod 2. The same is true for $(X_1,A_1)$, and hence we may assume that both $X_0$ and $X_1$ have at most 1 element. Since $|X_0|=|X_1|$ mod 2 by (ii), $X_0$ and $X_1$ are either both empty or both singletons. 

Let $A:[0,1]\times M \rightarrow \Phi_0(\mH)$ be a homotopy from $A_0$ to $A_1$, which we may assume to be independent of time for $t$ close to 0 and for $t$ close to 1. If $X_0=X_1=\emptyset$, then the pair $(W,B)$ with $W=\emptyset$ and 
\[
B: [0,1] \times M \rightarrow \Phi_1(\mR \times \mH, \mH), \qquad B = 0 \oplus A,
\]
is a framed cobordism from $(X_0,A_0)$ to $(X_1,A_1)$. If $X_0$ and $X_1$ are singletons, up to applying an isotopy we may assume that $X_0=X_1=\{x\}$. We can also assume that the path $t\mapsto A(t,x)$ is non-trivial in $\pi_1(\Phi_0(\mH),GL(\mH))$, because otherwise we can replace the homotopy $A$ by the homotopy $T A$, where $T$ is a non-trivial loop based at $I$ as above. Then Lemma \ref{LL4} implies that $(X_0,A_0)$ and $(X_1,A_1)$ are framed cobordant.

\medskip

It remains to prove (a) and (b). Let $k$ be a non-negative integer and let $\tilde{A}:M \rightarrow \Phi_0(\mH)$ be a continuous map. 
Since $GL(\mH)$ is dense in $\Phi_0(\mH)$, we can find a continuous map $A: M \rightarrow \Phi_0(\mH)$ which is homotopic to $\tilde{A}$ and such that $A^{-1}(GL(\mH))$ is not empty. Let $X$ be a subset of cardinality $k$ of some connected component of the open set $A^{-1}(GL(\mH))$. Then $(X,A)$ is a $0$-dimensional framed submanifold of $M$ with $|X|=k$ and $A$ homotopic to $\tilde{A}$, so (a) holds. If $\tilde{A}$ is orientable, then the fact that $X$ is contained in a connected component of $A^{-1}(GL(\mH))$ implies that the points of $X$ are all pairwise equivalent with respect to the equivalence relation induced by $A$, and hence $|\mathrm{deg}|(X,A)=|X|=k$. This proves (b).
\end{proof}

\appendix 

\section{Extension of Fredholm maps}
\label{app:extension}

The aim of this appendix is to prove Theorem \ref{thm:extensionfredholm} on the extension of Fredholm maps. 

By the already cited theorem of Eells and Elworthy \cite{ee70}, every Hilbert manifold has an open embedding into its model. Therefore, we may assume that $M$ and $N$ are open subsets of $\mH$. The trivializations of $TM$ and $TN$ which we use to write the differential of a Fredholm map between $M$ and $N$ as a $\Phi(\mH)$-valued map on $M$ are the ones induced by these embeddings.

Up to the regularization of $A$, see Lemma \ref{smoothing}, and up to the choice of a smaller $V$, we may assume that the map $A$ is smooth. For any $x\in M$ set
\[
\delta_0(x) := \mathrm{dist} \bigl(g(x),N^c\bigr)>0, \quad \delta_1(x):= \mathrm{dist} \bigl( A(x), \Phi_n(\mathbb{H})^c \bigr)>0,
\]
where $N^c$ and $\Phi_n(\mathbb{H})^c$ denote the complements of $N$ and $\Phi_n(\mathbb{H})$ in $\mathbb{H}$ and $L(\mathbb{H})$, respectively. We set $\delta_0(x) :=+\infty$ if $N=\mathbb{H}$. Now we choose $r(x)>0$ to be so small that:
\renewcommand{\theenumi}{\alph{enumi}}
\renewcommand{\labelenumi}{(\theenumi)}
\begin{enumerate}
\item $B_{r(x)}(x)\subset M$;   
\item if $B_{r(x)}(x)\cap U\neq \emptyset$ then $B_{r(x)}(x)\subset V$;
\item $\mathrm{\overline{conv}}\, g(B_{r(x)}(x)) \subset \{z\in N \mid \|z-g(x)\|< \delta_0(x)/2\}$;
\item $\mathrm{\overline{conv}}\, A(B_{r(x)}(x)) \subset \{T\in \Phi_n(\mathbb{H}) \, \mid \, \|T-A(x)\|_{L(\mathbb{H})}< \delta_1(x)/2\}$;
\item $r(x) \sup \{\|A(y)\|_{L(\mathbb{H})} \mid y\in B_{r(x)}(x)\} < \delta_0(x)/4$;
\item $r(x) \sup \{\|dA(y)\|_{L(\mathbb{H},L(\mathbb{H}))}  \mid y\in B_{r(x)}(x)\} < \delta_1(x)/2$.
\end{enumerate}
Here $\mathrm{\overline{conv}}$ denotes the closed convex hull.

Let $\mathscr{U} = \{U_j \mid j\in J\}$ be a locally finite star-refinement of the open covering $\{B_{r(x)}(x) \mid x\in M\}$. That is,
\begin{equation}
\label{uno}
U_{j_1} \cap \dots \cap U_{j_k} \neq \emptyset \quad \implies \quad U_{j_1} \cup \dots \cup U_{j_k} \subset B_{r(x)} (x) \mbox{ for some } x\in M.
\end{equation}
Let $\{\varphi_j \mid j\in J\}$ be a smooth partition of unity subordinated to $\mathscr{U}$, and for every $j\in J$ fix a point $x_j$ in $U_j$. 
We define the smooth maps
\begin{eqnarray*}
\overline{f}(x) & := & \sum_{j\in J} \varphi_j(x) \Bigl( g(x_j) + \int_0^1 A\bigl(x_j + s(x-x_j) \bigr) (x-x_j) \, ds \Bigr), \quad \forall x\in M, \\
h(t,x) & := & t \overline{f}(x) + (1-t) g(x), \quad \forall (t,x)\in [0,1]\times M.
\end{eqnarray*}

Fix a point $x$ in $M$ and let $U_{j_1}, \dots, U_{j_k}$ be the elements of the covering $\mathscr{U}$ which contain $x$, so that
\[
\sum_{i=1}^k \varphi_{j_i}(x) = 1,
\]
and $\varphi_j(x)=0$ if $j$ is not in $\{j_1,\dots,j_k\}$.
 By Property (\ref{uno}), there is a point $y$ in $M$ such that
\begin{equation}
\label{due}
x\in U_{j_1} \cup \dots \cup U_{j_k} \subset B_{r(y)}(y).
\end{equation}
Then $\|x-x_{j_i}\|<2 r(y)$ for every $i=1,\dots,k$, and the norm of the vector
\[
h(t,x) - \sum_{i=1}^k \varphi_{j_i}(x) \bigl( t g(x_{j_i}) + (1-t) g(x) \bigr) = t \sum_{i=1}^k \varphi_{j_i}(x) \int_0^1 A\bigl( x_{j_i} + s(x-x_{j_i}) \bigr) (x-x_{j_i})\, ds 
\]
does not exceed the quantity
\[
2 r(y) \sup_{z\in B_{r(y)}(y)} \|A(z)\|_{L(\mathbb{H})},
\]
which by (e) is smaller than $\delta_0(y)/2$. On the other hand, (c) implies that
\[
\Bigl\| g(y) - \sum_{i=1}^k \varphi_{j_i}(x) \bigl( t g(x_{j_i}) + (1-t) g(x) \bigr) \Bigr\| < \frac{\delta_0(y)}{2}.
\]
Therefore,
\[
\|h(t,x)-g(y)\| < \delta_0(y),
\]
and by the definition of $\delta_0$ we conclude that $h$ and, a fortiori, $\overline{f}$ take values into $N$.

If, moreover, $x$ belongs to $U$, then (b) guarantees that $B_{r(y)}(y)$ is contained in $V$, so by (\ref{due}), (i) and (ii),
\begin{eqnarray*}
h(t,x) & = & t \sum_{j=1}^k \varphi_{j_i}(x) \Bigl( f(x_{j_i}) + \int_0^1 df\bigl(x_{j_i} + s(x-x_{j_i}) \bigr) (x-x_{j_i})\, ds \Bigr) + (1-t) f(x) \\
& = & t \sum_{j=1}^k \varphi_{j_i}(x) \Bigl( f(x_{j_i}) + \int_0^1\frac{d}{ds} f\bigl(x_{j_i} + s(x-x_{j_i}) \bigr) \, ds \Bigr) + (1-t) f(x) \\
& = & t \sum_{j=1}^k  \varphi_{j_i}(x) f(x) + (1-t) f(x) = f(x),
\end{eqnarray*}
hence $\overline{f}|_U=f|_U$ and $h$ is a homotopy between $\overline{f}$ and $g$ relative $U$, proving (i').

The differential of $\overline{f}$ at $x$ is the operator
\[
d\overline{f}(x) = A_1(x) + A_2(x) + A_3(x),
\]
where 
\begin{eqnarray*}
A_1(x) u & := & \sum_{j=1}^k d\varphi_{j_i}(x) [u] \Bigl( g(x_{j_i}) + \int_0^1 A \bigl(x_{j_i} + s(x-x_{j_i}) \bigr) (x-x_{j_i})\, ds \Bigr), \quad \forall u\in \mathbb{H}, \\
A_2(x) & := & \sum_{i=1}^k \varphi_{j_i}(x) \int_0^1 A \bigl(x_{j_i} + s(x-x_{j_i}) \bigr) \, ds, \\
A_3(x) u & := & \sum_{i=1}^k \varphi_{j_i}(x) \int_0^1 s \, dA\bigl( x_{j_i} + s(x-x_{j_i}) \bigr) [u] (x-x_{j_i})\, ds, \quad \forall u\in \mathbb{H}.
\end{eqnarray*}
The operator $A_1(x)$ has rank at most $k$. By (\ref{due}) and (d), 
\begin{equation}
\label{tre}
\|A_2(x)-A(y)\|_{L(\mathbb{H})} < \frac{\delta_1(y)}{2}.
\end{equation}
By (\ref{due}) and (f),
\begin{equation}
\label{quattro}
\|A_3(x)\|_{L(\mathbb{H})} \leq 2 r(y) \sup_{z\in B_{r(y)}(y)} \|dA(z)\|_{L(\mathbb{H},L(\mathbb{H}))} \int_0^1 s\, ds < \frac{\delta_1(y)}{2}.
\end{equation} 
By (\ref{tre}) and (\ref{quattro}), 
\[
\|A_2(x)+A_3(x)-A(y)\|_{L(\mathbb{H})} < \delta_1(y),
\]
so by the definition of $\delta_1$, $A_2(x)+A_3(x)$ belongs to $\Phi_n(\mathbb{H})$. Since $A_1(x)$ has finite rank, $d\overline{f}(x)$ belongs to $\Phi_n(\mathbb{H})$ too. Hence, $\overline{f}$ is a Fredholm map of index $n$.

By using again (\ref{due}) and (d), 
\[
\| t A_2(x) + (1-t) A(x) - A(y) \|_{L(\mathbb{H})} < \frac{\delta_1(y)}{2}, \quad \forall t\in [0,1].
\]
Together with (\ref{quattro}) and the fact that $A_1(x)$ has finite rank, this implies that the homotopy
\[
H(t,x) := t d\overline{f}(x) + (1-t) A(x) = t A_1(x) + \bigl( t A_2(x) + (1-t) A(x) \bigr) + t A_3(x)
\]
takes values into $\Phi_n(\mathbb{H})$. If, moreover, $x$ belongs to $U$, then $A(x)=df(x)=d\overline{f}(x)$. Therefore,
\[
H(t,x) = df(x), \quad \forall x\in U,
\]
and $H$ is a homotopy relative $U$, proving (ii').

\section{Proper Extension of Fredholm maps}
\label{app:properextention}

The aim of this appendix is to prove Theorem \ref{thm:totalextension} on the extension of proper Fredholm maps. The proof follow closely the proof of \cite[Lemma 4.2]{et70}, which has slightly stronger assumptions.

We start by recalling the following result of Bessaga \cite{bes66}. See also \cite[III.6]{bp75}.

\begin{theorem}
\label{thm:bessaga} There exists a diffeomorphism $d_1:\mH\rightarrow
\mH\setminus \{0\}$ which is the identity outside the ball of radius $1/2$.
\end{theorem}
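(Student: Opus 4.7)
The plan is to realize $d_1$ as an extension of the time-one map of a smooth bounded vector field $X$ on $\mH\setminus\{0\}$ whose forward flow is complete but for which a single distinguished integral curve escapes to $0$ in finite backward time. This construction is genuinely infinite-dimensional: the trajectory converging to $0$ must involve infinitely many mutually orthogonal directions, and the absence of such room in finite dimensions is exactly why the analogous statement fails there.

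Concretely, I would fix an orthonormal sequence $\{e_n\}_{n\geq 1}$ in $\mH$ and construct a smooth proper embedding $\gamma:(0,1]\to B_{1/2}\setminus\{0\}$ with $\gamma(t)\to 0$ as $t\to 0^+$, by having $\gamma$ traverse the two-planes $\mathrm{span}(e_{2n-1},e_{2n})$ in succession with radii $r_n\downarrow 0$ chosen so that $\gamma$ has finite length and its successive arcs are separated by a uniformly positive angle as viewed from the origin. I would then build a smooth bounded vector field $X$ on $\mH\setminus\{0\}$, supported in a tubular neighborhood $\mathcal{T}$ of $\gamma$ whose width shrinks superpolynomially in $t$, designed so that $\gamma$ is an integral curve of $X$ and every other integral curve is complete for all time and stays uniformly bounded away from $0$. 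Outside $B_{1/2}$ the field $X$ vanishes and the flow is trivial, so the time-one map $\phi_1^X$ equals the identity on $\mH\setminus\overline{B}_{1/2}$.

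I would then set $d_1(x):=\phi_1^X(x)$ for $x\neq 0$ and $d_1(0):=\gamma(1)$. By construction $d_1$ agrees with the identity outside $B_{1/2}$. For bijectivity onto $\mH\setminus\{0\}$: a point $y\in\mH\setminus\{0\}$ lies in $\phi_1^X(\mH\setminus\{0\})$ iff its backward trajectory is defined up to time $1$; the only failure is $y=\gamma(1)$, whose backward trajectory reaches $0$ precisely at time $1$, and this one missing value is restored by the assignment $d_1(0)=\gamma(1)$. Smoothness of $d_1$ on $\mH\setminus\{0\}$ is inherited from the flow, and smoothness at $0$, together with injectivity of the differential there, is forced by the superpolynomial decay of the tube width, which controls all derivatives of $\phi_1^X$ along $\gamma(t)$ as $t\to 0^+$.

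The main obstacle is the delicate simultaneous arrangement of three things: the curve $\gamma$, a tubular neighborhood $\mathcal{T}$ of $\gamma$ on which $X$ can be taken smooth and bounded with all derivatives controlled as $t\to 0^+$, and the uniqueness claim that $\gamma$ is the only integral curve whose $\alpha$-limit contains $0$. The first two are concrete constructions using explicit bump functions and rescalings, while the third uses the infinite orthogonality of $\gamma$ to ensure that $\mathcal{T}$ pinches down to $0$ along a codimension no other trajectory can invade. It is this last step that has no finite-dimensional analogue and is the true content of Bessaga's theorem.
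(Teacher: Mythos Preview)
The paper does not prove this statement; it is quoted as a result of Bessaga with references to \cite{bes66} and \cite[III.6]{bp75}. So there is no proof in the paper to compare against, and I comment only on the internal correctness of your outline.

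There is a genuine gap in the bijectivity step. You claim that the only $y\in\mH\setminus\{0\}$ whose backward trajectory under $X$ fails to exist for time $1$ is $y=\gamma(1)$. But for every $s\in(0,1]$ the backward trajectory of $\gamma(s)$ is $t\mapsto\gamma(s-t)$, which escapes to $0$ at time exactly $s\le 1$. Hence the entire half-open arc $\gamma((0,1])$ lies outside the image of $\phi_1^X$:
\[
\phi_1^X\bigl(\mH\setminus\{0\}\bigr)=\mH\setminus\gamma\bigl((0,1]\bigr).
\]
Adjoining the single value $d_1(0):=\gamma(1)$ still leaves the open arc $\gamma((0,1))$ uncovered, so your $d_1$ is not surjective onto $\mH\setminus\{0\}$. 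In short, the time-one map of a vector field with a single backward-incomplete orbit deletes an arc, not a point, and one extra assignment at $0$ cannot repair this. (There is also a smaller loose end: for the forward flow to be complete the integral curve through $\gamma(1)$ must continue for all positive times, so $\gamma$ is really defined on $(0,\infty)$; you do not arrange this, nor do you ensure that this forward continuation stays inside $B_{1/2}$.) What your flow actually yields is a diffeomorphism $\mH\setminus\{0\}\cong\mH\setminus\gamma((0,1])$, and passing from that to $\mH\cong\mH\setminus\{0\}$ still requires an idea of essentially the same strength as the theorem itself. Bessaga's original argument proceeds along quite different lines, via an auxiliary incomplete norm on $\mH$ and an explicit formula rather than a flow.
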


Let $B_r(x)$ be the open ball in $\mH$ with center $x$ and radius $r$, and let $S:= \partial B_1(0)$ 
be the unit sphere in $\mH$. A famous corollary of Bessaga's theorem is that $S$ is diffeomorphic to $\mH$. 

\begin{corollary} 
\label{cor:spherehilbertdiffeo}
There exists a diffeomorphism $d_2:S\rightarrow \mH$.
\end{corollary}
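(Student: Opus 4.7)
The strategy is to produce $d_2$ by combining two ingredients: stereographic projection, which diffeomorphs the sphere minus a point onto a hyperplane isomorphic to $\mH$, and a transported version of Bessaga's map. The point is that the support condition on $d_1$ (identity outside $B_{1/2}(0)$) corresponds under stereographic projection to being the identity near a prescribed point of $S$, which is exactly what allows one to fill in the missing point.

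Fix $p\in S$ and consider stereographic projection
\[
\sigma : S \setminus \{p\} \to p^\perp, \qquad \sigma(x) = \frac{x - \langle x,p\rangle p}{1 - \langle x,p\rangle}.
\]
A direct computation gives $\|\sigma(x)\|^2 = (1+\langle x,p\rangle)/(1-\langle x,p\rangle)$, so in particular $\|\sigma(x)\| \to \infty$ as $x\to p$; an explicit inverse (solving $\|(1-t)p + ty\|=1$ on the line from $p$ through $y\in p^\perp$) shows that $\sigma$ is a smooth diffeomorphism. Since $p^\perp$ is a separable infinite dimensional real Hilbert space, $p^\perp \cong \mH$. Setting $q := -p$, so that $\sigma(q)=0$, I form the conjugate
\[
\tilde d_1 := \sigma^{-1} \circ d_1 \circ \sigma : S \setminus \{p\} \to S \setminus \{p,q\}.
\]
Because $d_1$ is the identity outside $B_{1/2}(0)$ and $\sigma^{-1}(\overline{B_{1/2}(0)})$ stays uniformly away from $p$ (by the blow-up of $\|\sigma(\cdot)\|$ at $p$), the map $\tilde d_1$ agrees with the identity on a punctured neighborhood $V\setminus\{p\}$ of $p$ in $S$. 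Declaring $\tilde d_1(p):=p$ therefore yields a smooth extension $\tilde d_1 : S \to S\setminus \{q\}$. Applying the same construction to $d_1^{-1}$, which is also the identity outside $B_{1/2}(0)$, produces its smooth inverse, so $\tilde d_1$ is in fact a diffeomorphism $S \cong S\setminus \{q\}$.

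To finish, apply stereographic projection from $q$ to obtain a diffeomorphism $S \setminus \{q\} \cong q^\perp \cong \mH$ and compose with $\tilde d_1$ to define $d_2$. The only point of the plan that needs any care is smoothness of the extension at $p$, but this is automatic because $\tilde d_1$ coincides with the identity on a whole neighborhood of $p$; no analytic work near the removed point is required, and it is the ``identity outside $B_{1/2}(0)$'' clause in Theorem~\ref{thm:bessaga} that makes everything go through.
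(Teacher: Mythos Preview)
Your proof is correct and follows essentially the same strategy as the paper: use Bessaga's map $d_1$ in a chart to produce a diffeomorphism $S\cong S\setminus\{\text{point}\}$, then compose with stereographic projection. The only cosmetic difference is that the paper applies $d_1$ in an unspecified chart around a chosen point $x_0$ and then projects from that same $x_0$, whereas you choose stereographic projection from $p$ as your explicit chart (which is centered at $q=-p$, so it is $q$ that gets deleted) and then project a second time from $q$; this costs you two stereographic projections instead of one, but the content is identical.
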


\begin{proof}
Let $x_0\in S$. We first construct a diffeomorphism $S\cong S\setminus \{x_0\}$ by applying $d_1$ in a chart around $x_0$. The stereographic projection $\phi:S\setminus \{x_0\}\rightarrow x_0^\perp$ defined by
\[
\phi(x)=\frac{x-\langle x,x_0\rangle x_0}{1-\langle x,x_0\rangle}
\]
is a diffeomorphism onto the hyperplane $x_0^\perp$ orthogonal to $x_0$, which in turn is diffeomorphic to $\mH$. By composition, we obtain a diffeomorphism $d_2:S\rightarrow \mH$.
\end{proof}

Bessaga's Theorem allows us to construct various other useful maps, for example an involution on $\mH$ which exchanges the interior with the exterior of $S$:

\begin{corollary}
\label{cor:J}
There is a diffeomorphism $J:\mH\rightarrow\mH$ such that
\[
J\bigr|_S=\id,\qquad J^2=\id, \qquad\text{and}\qquad J \bigl( B_1(0) \bigr) = \mH \setminus \overline{B}_1(0).
\]
The map $J$ is Fredholm homotopic to $\id$ relative $S$.
\end{corollary}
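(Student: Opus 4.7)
The natural candidate for $J$ is the radial inversion $I(x):=x/\|x\|^2$, a smooth involution of $\mH\setminus\{0\}$ fixing $S$ pointwise and exchanging $B_1(0)\setminus\{0\}$ with $\mH\setminus\overline{B}_1(0)$; its only defect is being undefined at $0$. I would remedy this by conjugating with Bessaga's diffeomorphism $d_1$ from Theorem~\ref{thm:bessaga}, setting
\[
J := d_1^{-1} \circ I \circ d_1 : \mH \to \mH.
\]
Since $d_1$ is the identity outside $B_{1/2}(0)$ and is a bijection onto $\mH\setminus\{0\}$, it must send $B_{1/2}(0)$ onto $B_{1/2}(0)\setminus\{0\}$, from which $d_1(B_1(0)) = B_1(0)\setminus\{0\}$ and $d_1|_S = \id$. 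Chasing the composition then yields $J|_S=\id$, $J^2=\id$ (from $I^2=\id$), and $J(B_1(0)) = \mH\setminus\overline{B}_1(0)$, the last using that $d_1^{-1}$ is again the identity outside $B_{1/2}(0)$.

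For the Fredholm homotopy to $\id$ relative $S$, my plan is first to homotope $I$ to $\id$ on $\mH\setminus\{0\}$ and then transport the homotopy to $\mH$ by conjugating with $d_1$. A natural choice is the straight-line interpolation
\[
I_t(x) := (1-t)\,x + t\,I(x) = \Bigl(1 - t + \tfrac{t}{\|x\|^2}\Bigr)\, x, \qquad (t,x)\in [0,1]\times (\mH\setminus\{0\});
\]
the scalar factor is strictly positive, so $I_t$ takes values in $\mH\setminus\{0\}$, and it equals $1$ on $S$, hence $I_t|_S = \id$ for every $t$. Setting $H(t,x) := d_1^{-1}(I_t(d_1(x)))$ then yields a smooth map $[0,1]\times\mH\to\mH$ with $H(0,\cdot)=\id$, $H(1,\cdot)=J$, and $H(t,x)=x$ for all $x\in S$.

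The main point to verify is that $H$ is Fredholm of index one as a map out of $[0,1]\times \mH$. A short computation will show
\[
dI_t(x)\,u = \Bigl(1 - t + \tfrac{t}{\|x\|^2}\Bigr)\, u - \tfrac{2t}{\|x\|^2}\,\langle \hat{x},u\rangle\,\hat{x}, \qquad \hat{x}:= x/\|x\|,
\]
which is a rank-one perturbation of a strictly positive multiple of the identity and hence Fredholm of index zero, uniformly in $(t,x)$. Since $d_1$ is a diffeomorphism, $\partial_x H(t,x)$ inherits this Fredholm property, and the full differential $dH(t,x)(s,u) = s\,\partial_t H(t,x) + \partial_x H(t,x)\,u$ differs from $\partial_x H(t,x)$ by a rank-one term in the $s$-direction, making it Fredholm of index one. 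The one subtle step is ensuring that $dI_t(x)$ remains Fredholm at those $(t,x)$ where its radial eigenvalue $1 - t - t/\|x\|^2$ vanishes, and this is precisely what the explicit rank-one decomposition guarantees. A smooth reparametrization of $t$ then produces a homotopy constant near the endpoints in the sense used in the paper.
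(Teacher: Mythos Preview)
Your proposal is correct and follows essentially the same approach as the paper: conjugate the inversion $x\mapsto x/\|x\|^2$ by Bessaga's diffeomorphism $d_1$ to define $J$, and use the convex interpolation $I_t(x)=(1-t+t/\|x\|^2)\,x$ (again conjugated by $d_1$) as the Fredholm homotopy, checking that $dI_t(x)$ is a rank-one perturbation of a positive multiple of the identity. Your write-up in fact supplies a few details the paper leaves implicit, such as the verification that $d_1(B_1(0))=B_1(0)\setminus\{0\}$ and the explicit eigenvalue computation.
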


\begin{proof}
Consider the diffeomorphism $j:\mH\setminus \{0\}\rightarrow \mH\setminus\{0\}$ given by $j(x)=x/\|x\|^2$. Then $j^2=\id$ and the diffeomorphism $J:\mH\rightarrow \mH$ defined by $J=d_1^{-1}\circ j \circ d_1$ also satisfies $J^2=\id$. Since $d_1$ maps the interior of the unit ball to itself, and the complement of the unit ball to itself, we have $J \bigl( B_1(0) \bigr) = \mH \setminus \overline{B}_1(0)$. 

For every $t\in [0,1]$ let $j_t:\mH\setminus \{0\}\rightarrow \mH\setminus \{0\}$ be defined by $j_t(x)=\lambda_t(x)x$, with the smooth function $\lambda_t:\mH\setminus\{0\}\rightarrow \mR$ defined by
\[
\lambda_t(x)=1-t+\frac{t}{\norm{x}^2}.
\]
Since $\lambda_t(x)>0$ for every $t\in [0,1]$ and $x\neq 0$, this map is well defined and Fredholm: indeed, its differential 
\[
dj_t(x)=x \, d\lambda_t(x)+\lambda_t(x)\id
\] 
is a Fredholm operator, being a perturbation of rank at most one of the isomorphism $\lambda_t(x)\id$.
 Then $J_t=d_1^{-1}\circ j_t \circ d_1$ is the required Fredholm homotopy. For each $x\in S$ we see that $j_t(x)=x$, so $J_t$ is a homotopy relative $S$.
\end{proof}

We will need the following extension result for proper Fredholm maps into the unit sphere:

\begin{lemma}
\label{lem:propfredholm}
Let $M$ be a Hilbert manifold and $f:M\rightarrow S$ a Fredholm map into the infinite dimensional sphere of $\mH$. Let $A\subset M$ be a closed set and $ V$ a neighborhood of $A$ such that $f\bigr|_{\overline{V}}$ is proper ($A$ and $V$ might be empty). Then there exists a smooth function $\lambda:M\rightarrow [1,+\infty)$ such that $A\subset \lambda^{-1}(1)\subset V$ and the map $g:M\rightarrow \mH$ defined by
\[
g(x):=\lambda(x)f(x)
\]
is proper and Fredholm of index $\mathrm{ind} \, f -1$.
\end{lemma}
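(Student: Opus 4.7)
I would split the proof into two largely independent parts: the Fredholm index computation (which works for any admissible $\lambda$) and the construction of $\lambda$ making $g$ proper.

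\emph{Fredholm index.} For any smooth $\lambda:M\to[1,+\infty)$, differentiation yields
\[
dg(x)[v] = d\lambda(x)[v]\cdot f(x) + \lambda(x)\,df(x)[v].
\]
Since $f(x)\in S$ and $df(x)$ takes values in $T_{f(x)}S=f(x)^\perp$, the orthogonal decomposition $\mH=\mR f(x)\oplus f(x)^\perp$ realizes $dg(x)$ as the direct sum $\bigl(d\lambda(x),\,\lambda(x)\,df(x)\bigr)$ into $\mR\oplus f(x)^\perp=\mH$. The second component is Fredholm of index $\ind f$ as a map into $f(x)^\perp$, and enlarging the codomain to $\mH$ drops the index by one; the first component has rank at most one and therefore preserves the Fredholm index upon addition. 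Hence $\ind g=\ind f-1$.

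\emph{Shape of $\lambda$.} Using smooth partitions of unity on the Hilbert manifold $M$, I would choose a smooth $\alpha:M\to[0,1]$ with $\alpha\equiv 0$ on a closed neighborhood of $A$ contained in $V$ and $\alpha\equiv 1$ on $M\setminus V$. Then set $\lambda := 1+\alpha\cdot h$ for a smooth positive function $h:M\to[1,+\infty)$ to be specified. This immediately gives $\lambda\geq 1$, $\lambda\equiv 1$ on $A$, and $\lambda^{-1}(1)\subset V$, regardless of the choice of $h$.

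\emph{Properness.} Suppose $g(x_n)\to y\in\mH$. Then $\lambda(x_n)=|g(x_n)|\to|y|\geq 1$ is bounded, and $f(x_n)=g(x_n)/\lambda(x_n)\to y/|y|\in S$. If infinitely many $x_n$ lie in $\overline V$, the properness of $f|_{\overline V}$ combined with the convergence of $f(x_n)$ supplies a convergent subsequence. The remaining case, $(x_n)\subset M\setminus V$ with $h(x_n)$ bounded, is the main obstacle: $h$ must be chosen so that boundedness of $h(x_n)$ together with convergence of $f(x_n)$ forces precompactness of $(x_n)$. Exploiting the second countability of $M$, one picks a countable locally finite smooth open cover $\{U_n\}_{n\in\mN}$ of $M\setminus V$ with subordinate partition of unity $\{\varphi_n\}$ and writes $h=\sum_n a_n\varphi_n$ with positive weights. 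A diagonal induction chooses the $a_n$ growing rapidly enough that each sublevel set $\{h\leq R\}$ meets only finitely many $U_n$, and the resulting intersections with $f^{-1}(C)$ for $C\subset S$ compact are controlled via the properness of $f|_{\overline V}$ applied to adjacent regions reachable through the local Fredholm structure of $f$. The delicate step is precisely this calibration of the weights: it is the analogue, in our infinite-dimensional setting, of choosing a proper exhaustion function, and it is the place where the properness assumption on $\overline V$ is essential.
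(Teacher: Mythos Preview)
Your Fredholm index computation is correct and essentially identical to the paper's. The gap is in the properness argument on $M\setminus V$.

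You write that the intersections with $f^{-1}(C)$ are ``controlled via the properness of $f|_{\overline V}$ applied to adjacent regions reachable through the local Fredholm structure of $f$,'' and that ``the properness assumption on $\overline V$ is essential'' at this step. This is a misattribution. The properness of $f|_{\overline V}$ tells you nothing about sequences that stay in $M\setminus V$; it is used only for the case $(x_n)\subset\overline V$, which you already handled correctly. What you actually need for the remaining case is the elementary fact that \emph{every Fredholm map is locally proper}: each point of $M$ has a neighborhood $U$ with $f|_{\overline U}$ proper. This is the ingredient you never state, and without it your ``diagonal induction'' on the weights $a_n$ cannot conclude anything, because boundedness of $h$ only localizes $(x_n)$ to finitely many $U_k$, and you still need $f$ to be proper on each $\overline{U_k}$ to extract a convergent subsequence.

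The paper's argument makes exactly this choice: cover $M\setminus A$ by a locally finite family $\{U_j\}_{j\in\mathbb N}$ with each $f|_{\overline{U_j}}$ proper, set $U_0:=V$, take a subordinate partition of unity $\{\phi_j\}$ with $A\subset\phi_0^{-1}(1)$, and put $\lambda=\sum_{j\geq 0}2^j\phi_j$. Then $\lambda^{-1}([1,2^n])\subset U_0\cup\cdots\cup U_n$, and $g|_{\overline{U_j}}$ is proper because $(g|_{\overline{U_j}})^{-1}(K)\subset (f|_{\overline{U_j}})^{-1}(\pi(K))$ with $\pi$ the radial projection onto $S$. No ``delicate calibration'' of the weights is needed once the cover is chosen correctly; any sequence tending to infinity would do.
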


\begin{proof}
Since Fredholm maps are locally proper (see e.g.\ \cite[Theorem 1.6]{sma65}), every $x\in M$ has a neighborhood $U$ such that $f\bigr|_{\overline{U}}$ is proper. Since $M$ is paracompact we can find a countable locally finite open cover $\mathscr{U} = \{U_j\}_{j\in\mN}$ of $M\setminus A$ such that $f\bigr|_{\overline{U_j}}$ is proper. Set $U_0:=V$ and consider the covering $\mathscr{U} \cup \{U_0\}$ of $M$. Let $\{\phi_j\}_{j\in \mN_0}$, $\mN_0 = \{0\} \cup \mN$, be a smooth partition of unity subordinated to the latter covering such that
\begin{equation}
\label{doveA}
A \subset \phi_0^{-1}(1).
\end{equation}
Consider the function 
\[
\lambda:M \rightarrow [1,+\infty), \qquad \lambda :=\sum_{j=0}^{\infty} 2^j\phi_j,
\]
and set $g:=\lambda f$. By (\ref{doveA}) we have $A\subset \lambda^{-1}(1)$. If $x\notin V=U_0$ then $\phi_0(x)=0$ and hence $\lambda(x)\geq 2$; this implies that $\lambda^{-1}(1) \subset V$. For every $n\in \mN$ the set
\[
\lambda^{-1}([1,2^n])
\]
is contained in the union $U_0 \cup U_1 \cup \dots \cup U_n$, because on the complement of this union all $\phi_j$'s with $0\leq j\leq n$ vanish and hence $\lambda \geq 2^{n+1}$. The restrictions $g|_{\overline U_j}$ are proper, because $g(M) \subset \mH \setminus B_1(0)$ and for every $K\subset \mH \setminus B_1(0)$ there holds
\[
(g|_{\overline {U_j}})^{-1}(K)\subset(f\bigr|_{{\overline {U_j}}})^{-1}(\pi(K)),
\]
where $\pi:\mH\setminus \{0\}\rightarrow S$ is the radial projection onto the sphere. If $K$ is compact it is in particular closed and the set $(g|_{\overline {U_j}})^{-1}(K)$ is a closed subset of a compact set as $f$ is proper when restricted to $\overline{U_j}$, thus $(g|_{\overline {U_j}})^{-1}(K)$ is compact.

It follows that the restriction of $g$ to any finite union of the $\overline{U_j}$'s is proper, and from the fact that for every $R\geq 1$ the set
\[
\lambda^{-1}\bigl([1,R]\bigr)
\]
is contained in finitely many of the $\overline{U_j}$'s we conclude that $g$ is proper. At a point $x\in M$ we have
\[
dg(x)=f(x)\,d\lambda(x)+\lambda(x)\, df(x).
\]
The map $f(x)d\lambda(x)$ has rank at most one, and since $\lambda(x)$ does not vanish $\lambda(x)df(x): T_x M \rightarrow \mH$ is a Fredholm operator whose index equals the index of $df(x): T_x M \rightarrow T_{f(x)} S$ minus 1. Hence $g$ is a Fredholm map with index $\mathrm{ind}\, f-1$.
\end{proof}

The main step in the proof of Theorem \ref{thm:totalextension} is contained in the following result. The difference with respect to \cite[Lemma 4.2]{et70} is that the point $z$ is not required to belong to $f(U)$, an assumption which turns out to be superfluous.

\begin{lemma}
\label{lem:propextension}
Let $M$ be a Hilbert manifold and $U,V$ be open subsets of $M$ such that $\overline U\subset V$. Let $f:M\rightarrow \mH$ be a Fredholm map of index $n$ such that $f\bigr|_{\overline{V}}$ is proper. Moreover, assume that there exists a point $z$ in $\mH \setminus f(\partial U)$. Then there exists a proper Fredholm map $\overline f:M\rightarrow \mH$ of index $n$ such that $\overline f|_U = f|_U$ and
\begin{enumerate}[(a)]
\item $\overline f$ and $f$  are Fredholm homotopic relative $U$;
\item there exists a neighborhood $Z\subset \mH$ of $z$ such that $\overline f^{-1}(Z)=f^{-1}(Z)\cap U$. 
\end{enumerate}
\end{lemma}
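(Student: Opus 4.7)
By replacing $f$ with $f - z$ in the codomain we may assume $z = 0$. Since $f|_{\overline V}$ is proper, hence a closed map, $f(\partial U) \subset \mH$ is closed and does not contain $0$, so I can choose $\rho > 0$ with $f(\partial U) \cap \overline B_{2\rho}(0) = \emptyset$. The compact set $K := f^{-1}(\overline B_\rho(0)) \cap \overline V$ is disjoint from $\partial U$ and splits as $K = K^{\mathrm{in}} \sqcup K^{\mathrm{out}}$ with $K^{\mathrm{in}} := K \cap U$ compact in $U$ and $K^{\mathrm{out}} := K \setminus \overline U$ compact in $V \setminus \overline U$. By normality, fix open sets $K^{\mathrm{in}} \subset W_0 \subset \overline{W_0} \subset W_0' \subset \overline{W_0'} \subset U$ and $\overline U \subset W_1 \subset \overline{W_1} \subset V$ with $\overline{W_1} \cap K^{\mathrm{out}} = \emptyset$; consequently $f^{-1}(B_\rho(0)) \cap \overline{W_1} \subset K \cap \overline{W_1} = K^{\mathrm{in}} \subset \overline{W_0'}$.

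Using Bessaga's theorem I produce a smooth Fredholm self-map $e : \mH \to \mH$ of index $0$ that equals the identity outside $B_\rho(0)$ and has image contained in $\mH \setminus \overline B_{\rho/2}(0)$, together with a smooth isotopy $\{e_t\}_{t \in [0,1]}$ of index-$0$ Fredholm self-maps of $\mH$ from $e_0 = \id$ to $e_1 = e$, each $e_t$ equal to the identity outside $B_\rho(0)$. Choose a smooth cutoff $\phi : M \to [0, 1]$ with $\phi \equiv 0$ on $\overline{W_0'}$ and $\phi \equiv 1$ on $M \setminus U$, and set $\tilde f(x) := e_{\phi(x)}(f(x))$. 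Then $\tilde f$ is Fredholm of index $n$, and it agrees with $f$ throughout $\overline{W_1}$: on $\overline{W_0'}$ because $\phi = 0$, and on $\overline{W_1} \setminus \overline{W_0'}$ because $f$ avoids $B_\rho(0)$ there and every $e_t$ fixes $\mH \setminus B_\rho(0)$. On $M \setminus U$, where $\phi = 1$, we have $\tilde f = e \circ f \in \mH \setminus \overline B_{\rho/2}(0)$; in particular $\|\tilde f\| \geq \rho/2$ on $M \setminus \overline{W_1}$.

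To make $\tilde f$ proper I mimic the proof of Lemma \ref{lem:propfredholm}. Cover $M \setminus \overline{W_1}$ by a countable locally finite family $\{N_j\}_{j \in \mN}$ of open sets on which $\tilde f$ is proper (using local properness of Fredholm maps); set $N_0 := V$, on which $\tilde f$ is proper because $f|_{\overline V}$ is proper and $e$ is a diffeomorphism onto its image; choose a smooth partition of unity $\{\varphi_j\}_{j \in \mN_0}$ subordinate to this cover with $\varphi_0 \equiv 1$ on $\overline{W_1}$; and define $\lambda := \sum_{j \geq 0} 2^j \varphi_j : M \to [1, +\infty)$ and $\overline f := \lambda \tilde f$. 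Then $\lambda \equiv 1$ on $\overline{W_1}$ forces $\overline f = \tilde f = f$ on $\overline{W_1}$, so in particular $\overline f|_U = f|_U$; $\overline f$ is Fredholm of index $n$ since $d\overline f$ is a rank-one perturbation of $\lambda \cdot d\tilde f$; and properness follows as in Lemma \ref{lem:propfredholm} by decomposing $\overline f^{-1}(\overline B_R(0))$ into the compact set $f^{-1}(\overline B_R(0)) \cap \overline{W_1}$ and the piece in $M \setminus \overline{W_1}$, which is contained in $\lambda^{-1}([1, 2R/\rho])$ (using $\|\tilde f\| \geq \rho/2$) and therefore in finitely many $\overline{N_j}$, on each of which $\tilde f$ is proper.

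To verify (a) and (b), take $Z := B_{\rho/2}(0)$: then $\overline f = f$ on $U$ gives $\overline f^{-1}(Z) \cap U = f^{-1}(Z) \cap U$, while $\|\overline f(x)\| \geq \rho/2$ for $x \in M \setminus U$ gives $\overline f^{-1}(Z) \setminus U = \emptyset$, proving (b). For (a), concatenate the scaling homotopy $(t, x) \mapsto ((1-t) \lambda(x) + t)\, \tilde f(x)$ from $\overline f$ to $\tilde f$ (Fredholm since the scalar stays $\geq 1$) with the Bessaga deformation $(t, x) \mapsto e_{(1-t)\phi(x)}(f(x))$ from $\tilde f$ to $f$; both homotopies are visibly equal to $f$ on $U$. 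The main technical obstacle is the construction of the Bessaga-type map $e$ and its isotopy $\{e_t\}$ satisfying the prescribed Fredholm and geometric properties near $B_\rho(0)$; once this is granted, the rest is a routine adaptation of the proof of Lemma \ref{lem:propfredholm}.
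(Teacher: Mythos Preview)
Your approach is genuinely different from the paper's and, modulo the black box you flag at the end, essentially correct. The paper transfers everything to the unit sphere $S\cong\mH$: it first uses Lemma~\ref{lem:propfredholm} to push the (sphere-valued) map radially outward to a proper map into $\mH\setminus B_1(0)$, then applies the inversion $J$ of Corollary~\ref{cor:J} to fold the image back into $\overline B_1(0)$, and finally retracts onto $S\setminus\{z\}$ outside $U$. Each step is given by an explicit formula. You instead stay in $\mH$ and do everything with a single ``puncturing'' family $e_t$ plus the $\lambda$-scaling trick from Lemma~\ref{lem:propfredholm}; once $e_t$ exists, your argument is indeed shorter and avoids the sphere detour.

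The substantive point is that the construction of $e$ together with a Fredholm isotopy $e_t$ to the identity, all equal to the identity outside $B_\rho(0)$, is not a one-liner from Bessaga's theorem. Bessaga gives a diffeomorphism $\delta:\mH\to\mH\setminus\{0\}$ which is the identity outside a ball, and composing with a radial push yields a diffeomorphism $e:\mH\to\mH\setminus\overline B_{\rho/2}(0)$ with the right support; but producing the Fredholm homotopy $e_t$ from $e$ to $\id$ relative $\mH\setminus B_\rho(0)$ is exactly the sort of thing the paper's Corollary~\ref{cor:J} (the inversion $J$ and its Fredholm homotopy $J_t$ to the identity rel $S$) is designed to provide, and gluing $J_t$ inside $B_\rho$ with the identity outside still requires a collar-smoothing argument since the normal derivatives of $J_t$ and $\id$ do not match on $S_\rho$. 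Alternatively one can manufacture $e_t$ by invoking Theorem~\ref{thm:extensionfredholm} with target $N=\mH\setminus\overline B_{\rho/2}(0)$, which is legitimate (that theorem is proved independently), but then the ``main technical obstacle'' you defer is of the same weight as the paper's explicit construction. So your proof is complete only up to a lemma of roughly the same depth as Corollary~\ref{cor:J}.

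One small correction: your justification that $\tilde f$ is proper on $\overline{N_0}=\overline V$ (``because $f|_{\overline V}$ is proper and $e$ is a diffeomorphism onto its image'') is not the right reason. Even if $e$ is a diffeomorphism onto its image, $e:\mH\to\mH$ is \emph{not} proper, since $e^{-1}(C)$ can contain an unbounded-in-norm but non-compact piece of $\overline B_\rho(0)$. The correct argument is: $\tilde f(x)$ can differ from $f(x)$ only when $f(x)\in B_\rho(0)$, so for any compact $C\subset\mH$ one has
\[
\tilde f^{-1}(C)\cap\overline V \;\subset\; \bigl(f^{-1}(C)\cap\overline V\bigr)\;\cup\;\bigl(f^{-1}(\overline B_\rho(0))\cap\overline V\bigr) \;=\; \bigl(f^{-1}(C)\cap\overline V\bigr)\cup K,
\]
which is compact by properness of $f|_{\overline V}$. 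With this fix, the rest of your properness argument (bounding $\lambda$ via $\|\tilde f\|>\rho/2$ on $M\setminus\overline{W_1}$ and covering by finitely many $\overline{N_j}$ on which $\tilde f$ is locally proper) goes through.
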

\begin{proof}
By Corollary \ref{cor:spherehilbertdiffeo} we can replace the Hilbert space $\mH$ by the Hilbert sphere $S$ in this statement: $f$ is a Fredholm map of index $n$ from $M$ to $S$ such that $f|_{\overline{V}}$ is proper, $z$ is a point in $S\setminus f(\partial U)$ and we must find a proper Fredholm map $\overline{f} : M \rightarrow S$ of the same index such that $\overline{f}|_U = f|_U$ and (a) and (b) hold.

A proper map into a metric space is closed and hence $f(\partial U)$ is closed. Let $Y\subset S$ be an open neighborhood of $f(\partial U)$ such that $z$ does not belong to $\overline{Y}$. Let $V_1$ be an open neighborhood of $\partial U$ such that $f(V_1)\subset Y$ and $\overline{V_1} \subset V$. Then the open set $V_0:= U\cup V_1$ satisfies
\[
\overline{U} \subset V_0 \subset \overline{V_0} \subset V
\]
and is such that $z$ does not belong to $f(\overline{V_0} \setminus U)$. Let $A$ be a closed neighborhood of $\overline{U}$ contained in $V_0$.  By Lemma \ref{lem:propfredholm} there exists a smooth function $\lambda: M \rightarrow [1,+\infty)$ such that
\[
 A \subset \lambda^{-1}(1) \subset V_0
\]
and $g_1:= \lambda f: M \rightarrow \mH$ is a proper Fredholm map. The index of $g_1$ is $n-1$ and its image is contained in $\mH \setminus B_1(0)$. Let $i: S \hookrightarrow \mH$ denote the inclusion mapping. Then $g_1$ is Fredholm homotopic to $i\circ f$ relative $U$ via the homotopy
\[
\tilde{h}_t := t g_1 + (1-t) i\circ f = (t\lambda + 1- t) i\circ f, \qquad t\in [0,1].
\]
Indeed, the formula
\[
d \tilde{h}_t(x) = t f(x) d\lambda(x) + \bigl( t \lambda(x) + 1-t \bigr) df(x)
\]
shows that the operator $d\tilde{h}_t(x)$ is Fredholm, being a finite rank perturbation of the Fredholm operator $( t \lambda(x) + 1-t) df(x)$. 

By construction, $z$ does not belong to $g_1(M\setminus U)$ and since the latter set is closed, there exists an open ball $B\subset \mH$ containing $z$ such that $B\cap g_1(M\setminus U) = \emptyset$. Set
\[
g_2:= J \circ g_1 : M \rightarrow \mH,
\]
with $J$ as in Corollary \ref{cor:J}. The image of $g_2$ is contained in the closed unit ball of $\mH$. Since $J$ is a diffeomorphism, $g_2$ is Fredholm homotopic to $g_1$ relative $U$ by the corresponding property of $J$ stated in Corollary \ref{cor:J}. Therefore, $g_2$ is Fredholm homotopic to $i\circ f$ relative $U$.

Since $z$ is fixed by $J$, we can find an open ball $B_0 \subset \mH$ containing $z$ and such that $B_0 \subset J(B)$. It follows that $B_0 \cap g_2(M\setminus U)=\emptyset$. Let 
\[
r : \overline{B}_1(0) \setminus \{z\} \rightarrow \overline{B}_1(0) \setminus \{z\}
\]
be the retraction onto $S\setminus \{z\}$ which is defined by mapping each $x\in  \overline{B}_1(0) \setminus \{z\}$ into the intersection of the ray emanating from $z$ in the direction of $x$ and the set $S\setminus \{z\}$. This map is Fredholm and is Fredholm homotopic to the identity mapping on $\overline{B}_1(0) \setminus \{z\}$ relative $S\setminus \{z\}$ by convex interpolation.  The retraction $r$ does not extend continuously to $z$. However, since $g_2$ maps  a neighborhood of $\partial U$ into $S\setminus \{z\}$, the map
\[
g_3: M \rightarrow \mH, \qquad g_3(x) = \left\{ \begin{array}{ll} r(g_2(x)) & \mbox{if } x\in M \setminus U, \\ g_2(x) & \mbox{if } x\in U, \end{array} \right.
\]
is smooth. The map $g_3$ is Fredholm of index $n-1$ and is proper, because $g_2$ is proper and $r$ is proper on $\overline{B}_1(0) \setminus B_0$. Moreover, the fact that $r$ is Fredholm homotopic to the identity relative $S\setminus \{z\}$ implies that $g_3$ is Fredholm homotopic to $g_2$ relative $U$. Therefore, $g_3$ is Fredholm homotopic to $i\circ f$ relative $U$. Denote by 
\[
h: [0,1] \times M \rightarrow \mH
\]
such a Fredholm homotopy. Since the image of $g_3$ is contained in $S$, we can restrict its codomain and obtain a proper Fredholm map $\overline{f}: M \rightarrow S$ of index $n$. The image of $[0,1]\times U$ by the map $h$ is contained in $S$. Let $\pi : \mH \setminus \{0\} \rightarrow S$ be the radial projection and $d_1: \mH \rightarrow  \mH \setminus \{0\}$ the diffeomorphism of Theorem \ref{cor:spherehilbertdiffeo}. Then 
\[
\pi\circ d_1 \circ h: [0,1]\times M \rightarrow S
\]
is a Fredholm homotopy, relative $U$, between $f = \pi\circ d_1 \circ i \circ f$ and $\overline{f} = \pi \circ d_1 \circ g_3$. Therefore, the proper Fredholm map $\overline{f}$ satisfies $\overline{f}|_U=f|_U$ and (a). It also satisfies (b) with $Z:= B_0 \cap S$.
\end{proof} 

We can finally prove Theorem \ref{thm:totalextension}

\begin{proof}[Proof of Theorem \ref{thm:totalextension}]
By Corollary \ref{fredextH} we can find a Fredholm map $g:M \rightarrow \mH$ such that $g|_V = f|_V$ and
\begin{enumerate}[(a)]
\item $dg: M \rightarrow \Phi_n(\mH)$ is homotopic to $A$ relative $V$.
\end{enumerate}  
Then $g|_{\overline V} = f|_{\overline V}$ is proper and $z$ belongs to $\mH \setminus g(\partial U)$. By Lemma \ref{lem:propextension} applied to $g$ we can find a proper Fredholm map $\overline{f} : M \rightarrow \mH$ which coincides with $g|_U=f|_U$ on $U$ and such that: 
\begin{enumerate}[(a)]
\setcounter{enumi}{1}
\item $\overline{f}$ and $g$ are Fredholm homotopic relative $U$;
\item there exists a neighborhood $Z\subset \mH$ of $z$ such that $\overline{f}^{-1}(Z) = g^{-1}(Z) \cap U = f^{-1}(Z) \cap U$.
\end{enumerate}
Then the proper Fredholm extension $\overline{f}$ of $f|_U$ satisfies (i') by (a) and (b), and (ii') by (c). 
\end{proof}

\section{Weakening the hypotheses.}
\label{weakening}

In many applications to nonlinear partial differential equations, it is useful to weaken some of the assumptions that we made in this article: On the one hand, one would like to work with maps with finite regularity, on the other hand one would like to replace the Hilbert space $\mH$ by more general Banach spaces. In this concluding appendix, we briefly discuss these two issues.

All the notions which we have introduced above make sense when the Fredholm maps are just of class $C^1$. Higher regularity is needed in the proofs when one wants to apply the Sard-Smale theorem, see Theorem \ref{smale}. By taking into account the minimal regularity needed in this theorem, we can make the following observations: Theorems \ref{main1} and \ref{main2} hold for Fredholm maps of class $C^{n+2}$, Theorem \ref{main3} just requires $C^1$ maps, and Theorem \ref{main4} needs $C^2$ regularity.

By using regularization by smooth partitions of unity, it is not difficult to show that any $C^1$ Fredholm map $f: M\rightarrow N$ is Fredholm homotopic to a smooth Fredholm map $\tilde{f}: M \rightarrow N$. Indeed, one can smoothen $f$ and keep the smoothing Fredholm by seeing $N$ as an open subset of $\mH$ (see \cite{ee70} again) and by setting
\[
\tilde{f}(x) := \sum_{n\in \mN} \phi_n(x) \bigl( f(x_j) + df(x_j)[x-x_j] \bigr), \qquad \forall x\in M,
\]
where $\{\phi_n\}_{n\in \mN}$ is a suitable smooth partition of unity on $M$. Using this fact, one can easily show that Theorem \ref{main1} actually holds for Fredholm maps of class $C^1$.

Unfortunately, we do not know whether it is possible to regularize proper Fredholm maps by keeping also the properness, and hence get the validity of Theorems \ref{main2} and \ref{main4} for $C^1$ maps just by regularization. In the literature, various alternative approaches have been developed in order to deal with this difficulty, starting with \cite{sap73} and \cite{isn74}. A particularly interesting approach based on finite dimensional reduction is developed in \cite{zr15}. It produces a degree theory for $C^1$ proper Fredholm maps of index zero between Banach manifolds, and also homotopy  invariants for proper Fredholm maps of positive degree. It would be interesting to compare these invariant with the complete ones presented here.

Now we turn to the question of replacing $\mH$ by a more general Banach space $\mathbb{E}$. Among the properties of the model space that we use, a couple stand out as crucial:

\begin{enumerate}[(i)]
\item $\mathbb{E}$ is a Kuiper space, i.e.\ the general linear group $GL(\mathbb{E})$ is contractible. It is known that not all infinite dimensional Banach spaces are Kuiper. For an overview of Kuiper and non-Kuiper Banach spaces, we refer to \cite{mitjagin}.
\item $\mathbb{E}$ is stable. This means that $\mathbb{E}$ is isomorphic to $\mR \times \mathbb{E}$. Non-stable Banach spaces exist. Gowers \cite{gowers} was the first to construct an example of such a space. 
\item $\mathbb{E}$ is diffeomorphic to its unit sphere. In \cite{aza97} it is shown that any infinite dimensional Banach space with $C^k$ norm is $C^k$  diffeomorphic to its unit sphere.
\item For the explicit computation of the homotopy classes of proper Fredholm maps of non-positive index we use that the homotopy type of the space of Fredholm operators is known, namely it equals $\mathbb{Z}\times BO$. Koschorke \cite{koschorke} has shown that the space of Fredholm operators of index zero of a infinite dimensional separable Kuiper Banach space is homotopy equivalent to $BO$. Moreover, if one further assumes $\mathbb{E}$ to be stable, the homotopy type of the space of all Fredholm operators on $\mathbb{E}$ equals $\mathbb{Z}\times BO$. 
\end{enumerate}
An inspection to our proofs and the above results show that it is possible to replace $\mH$ by a real separable Banach space $\mathbb{E}$ which is Kuiper, stable and whose norm is sufficiently regular. Indeed, when working on Banach spaces, or Banach manifolds, an extra difficulty is the lack of smooth partitions of unity. Due to this fact, the proofs of Theorems \ref{main1}, \ref{main2}, \ref{main3} and \ref{main4} extend to paracompact manifolds modeled on a separable, stable and Kuiper Banach space $\mathbb{E}$ when $\mathbb{E}$ admits a norm of class $C^{n+2}$, respectively $C^{n+2}$, $C^1$ and $C^2$. Lowering the regularity assumptions to $C^1$ as discussed above is hence relevant also for the extension to the Banach setting. We expect Theorems \ref{main1}, \ref{main2}, \ref{main3} and \ref{main4} to hold for Fredholm maps of class $C^1$ on  paracompact manifolds modeled on a separable, stable and Kuiper Banach space $\mathbb{E}$ admitting a norm of class $C^1$.

\bibliographystyle{alpha} 
\bibliography{pontryaginthom}

\end{document}